\documentclass[a4paper,12pt,draft]{amsart}
\usepackage[margin=30mm]{geometry}
\usepackage{amssymb, amsmath, amsthm, amscd}

\usepackage[T1]{fontenc}
\usepackage{eucal,mathrsfs,dsfont}
\usepackage{color}


\renewcommand{\leq}{\leqslant}
\renewcommand{\geq}{\geqslant}
\renewcommand{\le}{\leqslant}
\renewcommand{\ge}{\geqslant}

\newcommand{\lC}{{\underline{c}}}
\newcommand{\uC}{{\overline{C}}}
\newcommand{\la}{{\underline{\alpha}}}
\newcommand{\ua}{{\overline{\alpha}}}

\definecolor{mno}{rgb}{0.5,0.1,0.5}

\newcommand{\R}{\mathds R}
\newcommand{\Rd}{{\mathds R^d}}
\newcommand{\Pp}{\mathds P}
\newcommand{\Ee}{\mathds E}

\newcommand{\I}{\mathds 1}
\newcommand{\N}{\mathds{N}}

\newcommand{\ext}{\textup{Exit}}
\newcommand{\dhk}{\textup{DHK}}
\newcommand{\ND}{\textup{NDHK}}
\newcommand{\OD}{\textup{ODHK}}
\newcommand{\J}{\textup{J}}

\newcommand{\E}{\mathscr{E}}
\newcommand{\F}{\mathscr{F}}
\newcommand{\Nn}{\mathcal{N}}

\newtheorem{theorem}{Theorem}[section]
\newtheorem{lemma}[theorem]{Lemma}
\newtheorem{proposition}[theorem]{Proposition}
\newtheorem{corollary}[theorem]{Corollary}

\theoremstyle{definition}
\newtheorem{definition}[theorem]{Definition}
\newtheorem{example}[theorem]{Example}
\newtheorem{remark}[theorem]{Remark}

\numberwithin{equation}{section}
\begin{document}
\allowdisplaybreaks
\title[Non-local Dirichlet forms on Unbounded Open Sets] {\bfseries Intrinsic  Ultracontractivity of
Non-local Dirichlet forms on Unbounded Open Sets}
\author{Xin Chen\qquad Panki Kim \qquad Jian Wang}
\thanks{\emph{X.\ Chen:}
   Department of Mathematics, Shanghai Jiao Tong University, 200240 Shanghai, P.R. China; \& Fujian Key Laboratory of Mathematical Analysis and Applications (FJKLMAA), Fujian Normal University, 350007 Fuzhou, P.R. China. \texttt{chenxin217@sjtu.edu.cn}}
\thanks{\emph{P.\ Kim:}
 Department of Mathematics, Seoul National University,
Seoul 151-742, South Korea.
\texttt{pkim@snu.ac.kr}}
  \thanks{\emph{J.\ Wang:}
   School of Mathematics and Computer Science \& Fujian Key Laboratory of Mathematical Analysis and Applications (FJKLMAA), Fujian Normal University, 350007 Fuzhou, P.R. China. \texttt{jianwang@fjnu.edu.cn}}

\date{}

\maketitle

\begin{abstract}
In this paper we
consider a large class of symmetric Markov processes $X=(X_t)_{t\ge0}$ on $\R^d$ generated by non-local Dirichlet forms, which include
jump processes with small jumps of $\alpha$-stable-like type and with large jumps of super-exponential decay. Let $D\subset \R^d$ be an open (not necessarily bounded and connected) set, and $X^D=(X_t^D)_{t\ge0}$ be the killed process of $X$ on exiting $D$.
We obtain explicit criterion for the compactness and the intrinsic ultracontractivity of the Dirichlet Markov semigroup $(P^{D}_t)_{t\ge0}$ of $X^D$. When $D$ is a horn-shaped region, we further obtain two-sided estimates of ground state in terms of
jumping kernel of $X$ and the reference function of the horn-shaped region $D$.

\medskip

\noindent\textbf{Keywords:} symmetric jump process; non-local Dirichlet form;
intrinsic ultracontractivity; ground state;
intrinsic super Poincar\'{e} inequality.
\medskip

\noindent \textbf{MSC 2010:} 60G51; 60G52; 60J25; 60J75.
\end{abstract}
\allowdisplaybreaks

\bigskip

\section{Introduction and Main Results}\label{section1}
Suppose that $(P_t^D)_{t\ge0}$ is  a strongly continuous Markov
semigroup in $L^2(D; dx)$ generated by a symmetric Markov process in an open subset $D$ of $\R^d$, and that $(P_t^D)_{t\ge0}$ has the transition density
$p^D(t, x, y)$ with respect to the Lebesgue measure.
If $(P_t^D)_{t\ge0}$ is a compact semigroup, then it is known that there
exists a complete orthonormal set of eigenfunctions $\{\phi^D_n\}_{n\ge1}$
such that $P_t^D\phi^D_n(x)=e^{-\lambda_n^Dt}\phi^D_n(x)$ for all $n\ge1$,
$t>0$ and $x\in D$, where $\{\lambda_n^D\}_{n\ge1}$ are eigenvalues of
$(P_t^D)_{t\ge0}$ such that $0<\lambda_1^D\le \lambda_2^D\le
\lambda^D_3\le \cdots\to \infty$ as $n\to \infty$. In the literature, the first eigenfunction
$\phi^D_1$ is
called ground state. Suppose furthermore that $\phi_1$ (from now we write $\phi_1^D$ as $\phi_1$ for simplicity) can be chosen to be bounded, continuous and strictly positive on $D$.
The semigroup $(P_t^D)_{t\ge0}$  is said to be
intrinsically ultracontractive,
if for every $t>0$ there is a constant $C_{t,D}>0$ such that
$$p^D(t,x,y)\le C_{t,D}\phi_1(x)\phi_1(y),\quad x,y\in D.$$

The notion of intrinsic ultracontractivity for symmetric semigroups was first introduced by Davies and Simon in
\cite{DS}. It has wide applications in the area of analysis and probability. Recently,
the intrinsic ultracontractivity of Markov semigroups (including Dirichlet semigroups and Feyman-Kac semigroups) has been intensively established for various L\'evy type processes, see e.g.\ \cite{CWi14, CW1, CW16, CS,CS1,G, KK,KL,KS1, K, KS, Kw}.

The aim of this paper is to study the intrinsic ultracontractivity of Dirichlet semigroup $(P_t^D)_{t \ge 0}$ for a large class of symmetric jump processes on an unbounded open set $D$. It is twofold.
Firstly, under quite general setting we obtain sufficient conditions and necessary conditions for the intrinsic ultracontractivity of $(P_t^D)_{t \ge 0}$. We emphasize that these results are illustrated to be optimal for
symmetric jump processes with small jumps of $\alpha$-stable-like type and with large jumps of
super-exponential decay on horn-shaped regions, even the knowledge of such interesting processes is still far from completeness.
Secondly, for horn-shaped regions we establish sharp two-sided estimates of ground state explicitly in terms of jumping kernel and the reference function of horn-shaped regions. This is the most sophisticated part of this paper, since usually the ground state is
very sensitive with respect to the behavior of the process and the shape of the open set.
\subsection{Basic setting}\label{subsection1-1}

Let $J(x,y)$ be a non-negative symmetric Borel measurable function on $\R^d\times
\R^d\setminus {\rm diag}$ satisfying that \begin{equation}\label{e:dfc}\sup_{x  \in \R^d} \int_{y\neq x}
\min\{1, |x-y|^2\}
 J(x,y)\,dy < \infty.\end{equation}Here diag denotes the diagonal set, i.e., diag $=\{(x,x):x\in \R^d\}$.
Consider the following non-local quadratic form $(\E,\F)$ on $L^2(\R^d;dx)$:
\begin{equation}\label{non-local}
\begin{split}
 \E(f,f)&=\iint_{x\neq y}\big(f(x)-f(y)\big)^2 J(x,y)\,dx\,dy,\\
\F&=\overline{C_c^\infty(\R^d)}^{\E_1}.
\end{split}
\end{equation}
Here, $C_c^\infty(\R^d)$ denotes the space of $C^\infty$ functions
on $\R^d$ with compact support, $\E_1(f,f):=\E(f,f)+\int_{\R^d}
f^2(x)\,dx$ and $\F$ is the closure of $C_c^\infty(\R^d)$ with
respect to the metric $\E_1(f,f)^{1/2}.$
Under \eqref{e:dfc}, it is known that $(\E,\F)$ is a regular Dirichlet form on
$L^2(\R^d;dx)$, see e.g.\ \cite[Example 1.2.4]{FOT}. Hence there exist a subset $\mathcal{N}\subset\R^d$
having zero capacity with respect to the Dirichlet form $(\E,\F)$,
and a symmetric Hunt process $X=\big((X_t)_{t\ge 0}, (\Pp^x)_{x \in
\R^d\setminus\mathcal{N}}\big)$ with state space
$\R^d\setminus\mathcal{N}$. See \cite[Chapter 7]{FOT}. Throughout this paper, we always \emph{assume that $\Nn=\emptyset$ $($i.e.,\ the associated Hunt process $X$ can start from all $x\in \R^d$$)$,
and that there exists a transition
density function $p(\cdot,\cdot,\cdot):(0,\infty)\times \R^d
 \times \R^d \rightarrow[0,\infty)$
so that
\begin{equation*}
P_tf(x)=\Ee^x f(X_t)=\int_{\R^d}p(t,x,y)f(y)\,dy, \quad f\in
L^2(\R^d;dx), x\in\R^d, t>0,
\end{equation*}
where $(P_t)_{t \ge 0}$ is the $L^2(\R^d;dx)$ semigroup associated
with $(\E,\F)$}.

Let $D\subseteq \R^d$ be an open (not necessarily bounded and connected) set. We define a subprocess
$X^D$ of $X$ as follows
\begin{equation*}
X^D_t:=\begin{cases}
X_t,\quad\text{if}\ t<\tau_D,\\
\,\partial,\,\,\quad \text{if}\ t\ge \tau_D,
\end{cases}
\end{equation*}
where $\tau_D:=\inf \{t > 0: X_t \notin D\}$ and $\partial$ denotes the cemetery point. The process $X^D:=(X^D_t)_{t\ge0}$
is called the killed process of $X$ upon exiting $D$. By the strong Markov property, it is easy to see that the process $X^D$ has a transition density (or Dirichlet heat kernel) $p^D(t,x,y)$, which enjoys the following relation with $p(t,x,y)$:
\begin{equation}\label{e:dden}
\begin{split} p^D(t,x,y)&=p(t,x,y)-\Ee^x\big[p(t-\tau_D,X_{\tau_D},y)\I_{\{t\ge \tau_D\}}\big],\quad x, y \in D;\\
p^D(t,x,y)&=0,\quad x \notin D \text{ or } y \notin D.
\end{split}
\end{equation}
Define
$$
P_t^D f(x)=\Ee^x f(X_t^D)=\int_D p^D(t,x,y)f(y)\, dy,\quad t>0, x\in
D, f\in L^2(D;dx).
$$
It is well known that $(P_t^D)_{t \ge 0}$ is a strongly
continuous contraction semigroup on $L^2(D;dx)$, which is called Dirichlet semigroup associated with the process $X^D$.
Furthermore, in this paper except Appendix, we assume that
\emph{for every $t>0$ the
function $p^D(t,\cdot,\cdot)$ is
bounded, continuous and strictly positive on $D\times D$}.
This assumption is also mild in a number of applications. See Propositions \ref{p:pDcont}  and \ref{p:pDp} in Appendix.

\subsection{Main results for horn-shaped regions}\label{sec1.2}
In the following, denote by $a \wedge b= \min \{ a, b\}$,
$a \vee b= \max \{ a, b\}$, and $a^+= a \vee 0$. For any non-negative function $f$ and $g$, $f\simeq g$ means that there is a constant $c\ge1$ such that $c^{-1}f(r)\le g(r)\le cf(r)$, and $f\asymp g$ means that there exist positive constants $c_i$ $(i=1,2,3,4)$ such that
$c_1f(c_2r)\le g(r)\le c_3f(c_4r)$.
 For any open subset $D\subset\R^d$ and $x\in D$,
$\delta_D(x)$ stands for the Euclidean distance between
$x$ and $D^c$. We call an open set $D\subset\R^d$ a domain, when it is connected.

The contribution of this paper is to obtain efficient criterion for the intrinsic ultracontractivity of Dirichlet semigroup $(P^D_t)_{t\ge0}$ generated by non-local Dirichlet forms on general unbounded open sets, and to establish two-sided estimates for the corresponding ground state (i.e.,\ the first eigenfunction).
To illustrate how powerful our approach is and to show how precise and sharp our estimates are,
here we summarize our
results on horn-shaped regions with specific reference functions for
several classes of
jump processes, including symmetric jump processes with small jumps of stable-like type and large jumps of super-exponential decay.

Let us first recall the definition of
horn-shaped region (domain). For any $x=(x_1,x_2, \ldots, x_d)\in \R^d$, let $\tilde x=(x_2, x_3, \ldots, x_d)$. Suppose that $f:(0,\infty)\to (0,\infty)$ is a bounded and continuous function
such that $\lim_{u\to\infty} f(u)=0$. Then, the open set $D_f=\{x\in \R^d: x_1>0, |\tilde x|< f(x_1)\}$ is called the horn-shaped region.
We call $f$ the reference function of $D_f$.
The study of Dirichlet semigroups for Brownian motions on horn-shaped regions has a long history.
For (both mathematical and physical) backgrounds
and motivations on such subject,
 see \cite{B, BB, BD, Be, CL, DS, LPZ} and the references therein.

Let $\Phi$ be a strictly increasing function on $(0, \infty)$ satisfying that there exist
constants  $0<\la\le \ua<2$ and $0<\lC\le \uC<\infty$ such that
\begin{align}
\label{e:Phi}
\lC\left(\frac{R}{r}\right)^{\la}\le \frac{\Phi(R)}{\Phi(r)}\le \uC \left(\frac{R}{r}\right)^{\ua}, \quad
0<r\, \le  R.
\end{align}
Obviously \eqref{e:Phi} implies that $\Phi(0):=\lim_{r \downarrow 0}\Phi(r)=0$.

Let $\chi$ be a nondecreasing function on $(0,\infty)$ with $\chi(r)\equiv \chi(0)$ for $r\in(0,1]$, and there exist constants $c_1,\,c_2,\,L_1,\,L_2>0$ and $\gamma \in[0,\infty]$ such
that
\begin{equation}\label{e:Exp}
 L_1\exp(c_1r^{\gamma})\leq \chi(r)\leq  L_2\exp(c_2r^{\gamma}),\quad r>1.
\end{equation}
We next consider the jumping kernel $J(x,y)$ in the regular Dirichlet form $(\E,\F)$ given by \eqref{non-local}
with the following expression
\begin{align}
\label{e:jgeneral}
J(x, y) =
\frac{ \kappa(x,y) }{|x-y|^{d}\Phi(|x-y|)  \chi (|x-y|)},
\end{align}
where
$\kappa:\Rd\times\Rd\to (0,\infty)$  is a
 measurable function satisfying that $\kappa(x,y)=\kappa(y,x)$ and
 there exists a constant ${L_0}\ge 1$ such that
\begin{align*}\label{a:kappa}
L_0^{-1}\leq \kappa(x,y)\leq L_0,\quad
x,y\in\Rd.
\end{align*}
According to \cite{CKK1},
the Dirichlet form $(\mathscr{E}, \mathscr{F})$  with  jumping kernel $J(x,y)$ above
generates an
 symmetric Hunt process $X=(X_t)_{t\ge0}$, which
 starts from all $x\in \R^d$ and has a transition density function $p(t,x,y)$ with respect to the Lebesgue measure, such that $p(\cdot,\cdot,\cdot): (0,\infty)\times \R^d\times\R^d \to (0,\infty)$ is bounded, continuous and strictly positive.

Let $D_f$ be a horn-shaped region with respect to the reference function $f$, and denote by $(P^{D_f}_t)_{t\ge0}$ the associated Dirichlet semigroup. Then, the Dirichlet heat kernel $p^{D_f}(t,x,y)$ exists, and $(x,y)\mapsto p^{D_f}(t,x,y)$ is bounded, continuous and strictly positive on $D_f\times D_f$ for every $t>0$.
When $(P^{D_f}_t)_{t\ge0}$ is intrinsically
ultracontractive, we denote by $\phi_1$ the corresponding ground state. To obtain two-sided estimates for $\phi_1$, we should further assume that the reference function $f$ is $C^{1,1}$, and also
impose the following additional assumptions on the coefficient function $\kappa$ and the scaling function $\Phi$, respectively.
We remark here that such assumptions are not needed for Theorems \ref{ex1-1n1}(1)$(a)$,  \ref{ex1-1n1}(2)$(a)$ and \ref{ex1-1n3}$(a)$ below, which consider the intrinsic ultracontractivity of $(P^{D_f}_t)_{t\ge0}$.
\begin{itemize}
\item[{\bf (K$_\eta$)}]
{\it There are constants $L>0$ and $\eta> \ua/2$ such that $$|\kappa(x,x+h)-\kappa(x,x)|\leq L |h|^{\eta}$$ for every $x,\,h\in \Rd$ with $|h|\leq 1$, where $\ua$ is the constant in \eqref{e:Phi}.}
\item[{\bf(SD)}]
{\it The function $\Phi\in C^1(0,\infty) $ such that $ r\mapsto -(\Phi(r)^{-1}r^{-d})'/r$ is decreasing on $(0,\infty)$.}
\end{itemize}
 Note that condition {\bf(SD)} above holds for pure jump isotropic unimodal L\'{e}vy processes including all subordinated Brownian motions,
 whose characteristic exponents satisfy the weak scaling conditions in \eqref{e:Phi}.
  See e.g.\ \cite[Remark 1.4]{GKK}.

\begin{theorem}\label{ex1-1n1}
With all notations and assumptions above,  we have the following two statements.

\begin{itemize}
\item[(1)] Suppose that $\gamma=0$ in \eqref{e:Exp} and that
\begin{equation}\label{ex1-1-1ag}
f(s)\simeq \Phi^{-1}(\log^{-\theta}s)
\end{equation} for some $\theta>0$.
Then,
\begin{itemize}
\item[ $(a)$] $(P^{D_f}_t)_{t\ge0}$ is intrinsically ultracontractive if and only if $\theta>1;$
 \item[$(b)$] if $(P^{D_f}_t)_{t\ge0}$ is intrinsically ultracontractive,
then for all $x\in D_f$ with $|x|$ large enough,
\begin{equation}\label{e:sta}   \phi_1(x)\simeq \Phi(\delta_{D_f}(x))^{1/{2}} \Phi(f(|x|))^{{1}/{2}} \frac{1}{ |x|^{d} \Phi(|x|)}.\end{equation}\end{itemize}

\item[(2)] Suppose that $\gamma \in (0,\infty]$ in \eqref{e:Exp} and that
\begin{equation}\label{ex1-1-2g}
f(s) \simeq
\Phi^{-1}(s^{-\theta})
\end{equation} for some $\theta>0$.
 Then,
 \begin{itemize}
 \item[$(a)$] $(P^{D_f}_t)_{t\ge0}$ is intrinsically ultracontractive if and only if $\theta>{\gamma} \wedge 1;$
 \item[$(b)$] if $(P^{D_f}_t)_{t\ge0}$ is
intrinsically
ultracontractive,
then for all $x\in D_f$ with $|x|$ large enough,
\begin{equation}\label{ex1-1-3}
 \phi_1(x)\asymp\Phi(\delta_{D_f}(x))^{1/{2}} \Phi(f(|x|))^{{1}/{2}}\exp\Big(-|x|^{\gamma \wedge 1}\log^{{(\gamma-1)^+}/{\gamma}}|x|\Big) .
 \end{equation}\end{itemize}
\end{itemize}

 \end{theorem}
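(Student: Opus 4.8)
The plan is to organize the proof of Theorem~\ref{ex1-1n1} so that both statements follow from two general results: a criterion for the intrinsic ultracontractivity of $(P_t^D)_{t\ge0}$ on an arbitrary unbounded open set, and sharp two-sided estimates for the ground state $\phi_1$ on a horn-shaped region (these are the ``general setting'' results advertised in the Introduction, and are what one must establish first). Granting them, the remaining work is to check that the kernel \eqref{e:jgeneral} and the reference functions in \eqref{ex1-1-1ag}, \eqref{ex1-1-2g} fit their hypotheses, and to evaluate the resulting quantities explicitly. The only elementary input used throughout is that \eqref{e:Phi} makes $\Phi$ (and $\Phi^{-1}$) doubling, so that $f(s)\simeq\Phi^{-1}(g(s))$ is equivalent to $\Phi(f(s))\simeq g(s)$; hence $\Phi(f(|x|))\simeq\log^{-\theta}|x|$ in part~(1) and $\Phi(f(|x|))\simeq|x|^{-\theta}$ in part~(2). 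One should also keep in mind that for $x\in D_f$ with $|x|$ large one has $|x|\simeq x_1$ and $\delta_{D_f}(x)\le f(x_1)\simeq f(|x|)$, so the two ``boundary factors'' $\Phi(\delta_{D_f}(x))^{1/2}$ and $\Phi(f(|x|))^{1/2}$ in \eqref{e:sta} and \eqref{ex1-1-3} genuinely carry different information and must be tracked separately.

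\emph{Step 1: the intrinsic ultracontractivity dichotomies (1)(a) and (2)(a).} Here I would apply the general criterion, which (morally) reduces intrinsic ultracontractivity, and its failure, to an asymptotic comparison on the far part of $D_f$ between the slice killing rate at distance $s$ --- comparable to $\Phi(f(s))^{-1}$, the reciprocal exit time of a ball of radius equal to the slice width $f(s)$ --- and the minimal ``log-cost'' $-\log\mathcal R(s)$ of reaching a neighbourhood of a point at distance $\simeq s$ in $D_f$ without being killed; intrinsic ultracontractivity holds precisely when the former dominates the latter superlinearly, i.e.\ when $\Phi(f(s))^{-1}/(-\log\mathcal R(s))\to\infty$. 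For $\gamma=0$ the factor $\chi$ is bounded, one long jump is the cheapest way to travel, and $-\log\mathcal R(s)\simeq\log(s^{d}\Phi(s))\simeq\log s$; since $\Phi(f(s))^{-1}\simeq\log^{\theta}s$ by \eqref{ex1-1-1ag}, the ratio is $\simeq\log^{\theta-1}s$, which tends to $\infty$ if and only if $\theta>1$. For $\gamma\in(0,\infty]$ one has $\Phi(f(s))^{-1}\simeq s^{\theta}$ by \eqref{ex1-1-2g}, while $-\log\mathcal R(s)\simeq s^{\gamma}$ when $\gamma\le1$ (one jump) and $-\log\mathcal R(s)\simeq s\log^{(\gamma-1)/\gamma}s$ when $\gamma>1$ (a chain of jumps; see Step~2), so the ratio tends to $\infty$ if and only if $\theta>\gamma\wedge1$.

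\emph{Step 2: the ground state estimates (1)(b) and (2)(b).} Assuming intrinsic ultracontractivity, I would invoke the general two-sided estimate for $\phi_1$ on a horn-shaped region, which upon specialisation takes the schematic form
\[
\phi_1(x)\asymp \Phi(\delta_{D_f}(x))^{1/2}\,\Phi(f(|x|))^{1/2}\,\mathcal R(|x|),
\]
with $\Phi(\delta_{D_f}(x))^{1/2}$ the local boundary decay at $x$, $\Phi(f(|x|))^{1/2}$ a local factor reflecting the slice width near $x$, and $\mathcal R(|x|)$ the probability-type cost of travelling from the bulk of $D_f$ out to $x$ without being killed. For $\gamma=0$ the cheapest strategy is a single jump of size $\simeq|x|$ landing in the slice, so $\mathcal R(|x|)\simeq(|x|^{d}\Phi(|x|))^{-1}$ (the jumping-kernel value at distance $\simeq|x|$, $\chi$ being bounded here), which is exactly \eqref{e:sta}. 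For $\gamma\in(0,\infty]$, $\mathcal R(|x|)$ is the exponential of minus the optimal chain cost: decomposing the displacement into $k$ jumps of common size $\simeq|x|/k$, each jump costs $\simeq c(|x|/k)^{\gamma}$ from the tail of $\chi$, plus a base cost $\simeq\theta\log|x|$ coming from having to land in a narrow slice; minimising $c|x|^{\gamma}k^{1-\gamma}+Ck\log|x|$ over $k$ gives $k\equiv1$ with cost $\simeq|x|^{\gamma}$ for $\gamma\le1$, and $k\simeq|x|(\log|x|)^{-1/\gamma}$ with cost $\simeq|x|(\log|x|)^{(\gamma-1)/\gamma}$ for $\gamma>1$, i.e.\ $\mathcal R(|x|)\asymp\exp(-|x|^{\gamma\wedge1}\log^{{(\gamma-1)^+}/{\gamma}}|x|)$. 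Since $\Phi(f(|x|))^{1/2}\simeq|x|^{-\theta/2}$ is sub-exponential, and since the optimal constant in the exponent depends on $\theta$, these are absorbed into the relation $\asymp$ (which is why \eqref{ex1-1-3} is stated with $\asymp$ and why $\theta$ does not appear in it), yielding \eqref{ex1-1-3}.

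\emph{Main obstacle.} The delicate point, and where essentially all the work of the underlying general ground state estimate lies, is the sharp lower bound for $\phi_1$ in the super-exponential regime $\gamma>1$: one must show that the chain-of-jumps strategy is actually realised by the killed process, which requires a lower bound for $p^{D_f}(t,\cdot,\cdot)$ along a tube following the axis of $D_f$ that is uniform in the growing number $k$ of intermediate steps and in which the combinatorial entropy of the choice of decomposition does not degrade the logarithmic exponent; and one must match it with an upper bound --- a Markov-property estimate on $X^{D_f}$ simultaneously accounting for the super-exponential penalty on long jumps and for the narrowing of $D_f$ --- showing that no strategy does better. Getting the precise exponent $(\gamma-1)/\gamma$ on the logarithm, rather than merely $\exp(-|x|^{1+o(1)})$, is the crux. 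Once this is available, the cases $\gamma=0$ and $0<\gamma\le1$, as well as both intrinsic ultracontractivity dichotomies, follow comparatively directly from the general machinery together with the doubling of $\Phi$.
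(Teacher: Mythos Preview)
Your high-level plan matches the paper's: reduce Theorem~\ref{ex1-1n1} to general results (a criterion for intrinsic ultracontractivity and two-sided ground state bounds on horn-shaped regions), then specialise using the doubling of $\Phi$ and the explicit forms \eqref{ex1-1-1ag}, \eqref{ex1-1-2g}. The chain-of-jumps optimisation you describe for $\gamma>1$, with $k\simeq |x|(\log|x|)^{-1/\gamma}$ steps of length $\simeq(\log|x|)^{1/\gamma}$, is exactly what the paper implements by introducing a free scale function $g(r)$ in its general estimate (Theorem~\ref{ex1-1h}) and then choosing $g(r)=c\log^{1/\gamma}r$.

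One point where your sketch and the paper diverge is worth flagging. You package the intrinsic ultracontractivity criterion as a single ``$\Phi(f(s))^{-1}/(-\log\mathcal R(s))\to\infty$'' if-and-only-if. The paper does \emph{not} prove such a unified statement: the sufficient direction (Corollary~\ref{th1}) goes through an intrinsic super Poincar\'e inequality for $(\E^{D_f},\F^{D_f})$ (Theorem~\ref{T:IU}), feeding in a rough lower bound on $\phi_1$; the necessary direction (Proposition~\ref{p3-1a}) is an independent argument combining $\ND_{\Psi,\ge}$ with the off-diagonal heat kernel upper bound $\OD_{\Gamma,\le}$ to force $\Psi(\delta_{D_f}(x))|\log\Gamma(|x|)|\to 0$. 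These two mechanisms are genuinely different, and it is only because the specific reference functions \eqref{ex1-1-1ag}, \eqref{ex1-1-2g} make the thresholds coincide that one gets a clean dichotomy. Your heuristic correctly predicts the threshold in each case, but if you are writing this up you should present the two directions separately rather than as one criterion; in particular, for the sufficient direction you will need the super Poincar\'e machinery (or an equivalent), which your sketch does not mention.
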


For the class of jump processes considered in this subsection,
our method works for $D^f$ with not only specific reference functions in
 \eqref{ex1-1-1ag} and \eqref{ex1-1-2g},
but also general reference functions $f$. Here we restrict ourselves on \eqref{ex1-1-1ag} and \eqref{ex1-1-2g} to
 light up the structure of $\phi_1$.
 In the setting of Theorem \ref{ex1-1n1}, two-sided estimates of $\phi_1$
can be decomposed into two terms. Roughly speaking, for $x\in D_f$ with $|x|$ large enough, the term $\Phi(\delta_{D_f}(x))^{{1}/{2}}\Phi(f(|x|))^{{1}/{2}}$ in \eqref{e:sta} and \eqref{ex1-1-3} represent the probability (called exiting probability later) of the process $X^{D_f}$ exiting from $B(x, \delta_{D_f}(x))$,
and both of the other term in \eqref{e:sta} and \eqref{ex1-1-3} describe the
probability (called returning probability later) of the process $X^{D_f}$ from $x \in D_f$ to the origin, which is independent of $f$ and correlates with $p(1,x,0)$.
By \cite[Theorem 1.2]{CK1} and \cite[Theorem 1.2 and Theorem 1.4]{CKK1}, we know that two-sided heat kernel estimates of $p(1,x,0)$
are comparable to the jumping kernel
$J(x,0)$ for $|x|$ large enough, when $J(x,y)$ enjoys the form  \eqref{e:jgeneral} with
$\gamma \in [0,1]$.

\ \

\begin{theorem}\label{ex1-1n3}
Suppose that $\gamma \in (1,\infty]$ in \eqref{e:Exp} and  that \begin{equation}\label{ex5-1-2}
 f(s)\asymp
 \exp(-s^{\theta})
 \end{equation}
 for some constant
 $\theta>0$.
Then,
\begin{itemize}
\item[ $(a)$] the Dirichlet semigroup $(P^{D_f}_t)_{t\ge0}$ is intrinsically
ultracontractive;
\item[$(b)$] for any $x\in D_f$ with $|x|$ large enough,
\begin{equation}\label{ex5-1-5}
\phi_1(x)\asymp \Phi(\delta_{D_f}(x))^{{1}/{2}} \Phi(f(|x|))^{{1}/{2}}\exp\Big(-|x|^{1+(\gamma-1)(\theta\wedge \gamma)/\gamma}\Big).
\end{equation}\end{itemize}
In particular, when $\gamma=\infty$,
$$\phi_1(x)\asymp \Phi(\delta_{D_f}(x))^{{1}/{2}} \Phi(f(|x|))^{{1}/{2}}\exp\big(-|x|^{1+\theta}\big).$$
\end{theorem}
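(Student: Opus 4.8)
The plan is to treat Theorem~\ref{ex1-1n3} as the ``very light-tailed horn'' regime and derive it from the same machinery that yields Theorems~\ref{ex1-1n1} and \ref{ex1-1n3}$(a)$--type results, but now pushing the bookkeeping of the two competing small-probability events — the \emph{exiting probability} from $B(x,\delta_{D_f}(x))$ and the \emph{returning probability} from $x$ to the origin — to the exponential scale where the horn shrinks faster than any power of $\Phi^{-1}$. First I would record the general criterion (established earlier in the paper in the ``efficient criterion'' part alluded to in Section~\ref{sec1.2}) that reduces intrinsic ultracontractivity to a quantitative lower bound on $\inf_{x\in D_f}\mathbb{E}^x[\tau_{D_f}]^{-1}$-type quantities together with a super-Poincaré / exit-time estimate; for $\gamma>1$ and $f(s)\asymp\exp(-s^\theta)$ the relevant integrals converge regardless of $\theta>0$, so part $(a)$ follows with no extra condition, exactly as in the statement. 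This is the easy half.

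For part $(b)$, the approach is the standard ground-state factorization: for $|x|$ large, $\phi_1(x)$ is comparable (up to constants depending on a fixed time $t=1$) to $p^{D_f}(1,x,y_0)$ for a fixed reference point $y_0$ near the origin, and one estimates $p^{D_f}(1,x,y_0)$ by splitting the path of $X^{D_f}$ into (i) an excursion that stays inside a ball $B(x,\delta_{D_f}(x))$ and then must leave it without exiting $D_f$ — contributing the factor $\Phi(\delta_{D_f}(x))^{1/2}\Phi(f(|x|))^{1/2}$ via the Lévy-system / Dynkin-type computation used for \eqref{e:sta} and \eqref{ex1-1-3} — and (ii) the ``long flight'' from the neighborhood of $x$ back to $y_0$, staying in the tube $D_f$. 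The key step is to evaluate the cost of (ii). Here the returning probability is no longer comparable to $J(x,0)$ (that equivalence in the paper is stated only for $\gamma\in[0,1]$); instead, because the corridor around the axis has width $f(s)\asymp e^{-s^\theta}$ at distance $s$, the process must make $\sim|x|$ unit-length moves confined to an exponentially thin tube, and the dominant contribution comes from optimizing the trade-off between the number of jumps, their sizes (each jump of size $\rho$ costing roughly $\chi(\rho)^{-1}\asymp e^{-c\rho^\gamma}$), and the volume penalty $\prod f(\cdot)$ accumulated along the way. A variational computation (discretize the path along the axis, minimize $\sum c\,\rho_i^\gamma + \sum (\text{width penalties})$ subject to $\sum\rho_i\asymp|x|$) should produce the exponent $1+(\gamma-1)(\theta\wedge\gamma)/\gamma$: when $\theta\le\gamma$ the width penalty $|x|\cdot|x|^{\theta}\cdot(\gamma-1)/\gamma$-type term dominates after balancing, and when $\theta\ge\gamma$ the jump-size term saturates at $\theta=\gamma$, which is precisely why the minimum of $\theta$ and $\gamma$ appears. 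The $\gamma=\infty$ case is the clean limit: jumps of length $>1$ are essentially forbidden, the process must traverse the tube by $O(|x|)$ bounded steps, and the volume penalty alone gives $\exp(-|x|^{1+\theta})$.

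The main obstacle I expect is making the \emph{lower bound} in (ii) rigorous and matching the upper bound up to the $\asymp$ (not $\simeq$) tolerance. The upper bound on the returning probability is comparatively routine — iterate the Lévy system formula and the strong Markov property, bounding $p^{D_f}$ crudely inside thin slabs and summing a convergent series whose leading exponential term is the variational value. The lower bound requires constructing an explicit near-optimal strategy for $X^{D_f}$: one tiles the segment from $x$ to $y_0$ by $O(|x|^{1-(\theta\wedge\gamma)/\gamma})$ (or $O(|x|)$ when $\gamma=\infty$) overlapping balls whose radii follow the optimal profile, forces the process to pass through each via a lower bound on $p^{D_f}(s, z, z')$ over a short time on each ball (using the a priori heat-kernel lower bounds from \cite{CKK1} localized to $D_f$, which is where $C^{1,1}$-regularity of $f$ and conditions {\bf(K$_\eta$)}, {\bf(SD)} enter), and multiplies the resulting probabilities. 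Controlling the cumulative constants so that the product of $O(|x|^{\text{power}})$ many factors still leaves only the intended exponential — and absorbing the discrepancy into the $c_2(c_4 r)$ freedom built into $\asymp$ — is the delicate part; this is exactly the ``most sophisticated part of the paper'' flagged in the introduction, and I would expect the actual proof to invoke general horn-shaped-region estimates proved in earlier sections rather than redo the excursion analysis from scratch.
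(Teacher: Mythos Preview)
Your proposal is correct and aligns with the paper's approach: part~$(a)$ is exactly Corollary~\ref{th1}(4), and for part~$(b)$ the paper packages the Lévy-system iteration (upper bound) and chain-of-balls construction (lower bound) you describe into the general horn-shaped estimate Theorem~\ref{ex1-1h}(3), stated with a free auxiliary step-size function $g(r)$ that plays precisely the role of your variational parameter. The proof of Theorem~\ref{ex1-1n3}$(b)$ then consists of choosing $g(r)=r/4$ when $\theta\ge\gamma$ and $g(r)=4r^{\theta/\gamma}$ when $\theta<\gamma$ in \eqref{ex1-1-2ah111}--\eqref{ex1-1-2ah112}, which is the optimal balance you identified; your closing remark that the paper would invoke such general estimates rather than redo the excursion analysis is exactly what happens.
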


\medskip
The result above indicates the term that describes the returning probability of $X^{D_f}$ from $x \in D_f$ to the origin should depend on the reference function $f$, if $f$ is decay
faster than polynomials,
see e.g.\ \eqref{ex5-1-2}.
In particular, \eqref{ex5-1-5} implies that, when $1 < \gamma \le \theta$,
the returning probability dominates the exit probability since the exponential term can be absorbed into $\Phi(f(|x|))^{{1}/{2}}$;
while, when $1 <\theta < \gamma $ the returning probability
reveals some delicate interactions between the reference function $f$ and the jumping kernel $J(x,y)$.

To the best of our knowledge, both results above concerning the intrinsic ultracontractivity and two-sided estimates of ground state for general symmetric (non-L\'evy) jump processes on unbounded open sets are new. In fact,
previously the intrinsic ultracontractivity of symmetric jump processes on unbounded open sets is considered only for symmetric $\alpha$-stable L\'evy processes,
 e.g.,\ see \cite[Example 2]{Kw} for related conclusions on horn-shaped regions.
The argument of \cite{Kw} is heavily based on uniform boundary Harnack inequalities in \cite{BKKw1}.  Even though the uniform boundary Harnack inequalities for a quite general discontinuous Feller process in metric measure space
have  been proved in \cite{BKKw2, KSV17}, it is still not available  for symmetric jump processes whose jumping kernel $J(x,y)$ given by \eqref{e:jgeneral} with $\gamma\in (1,\infty)$ and it is not true when $\gamma=\infty$, see \cite[Section 6]{KS0}. Thus the approach of \cite{Kw} can not yield Theorem \ref{ex1-1n1} when $\gamma\in (1,\infty]$ and Theorem \ref{ex1-1n3}.
Moreover, our criterion could also
be applied to a class of jump processes
whose scaling orders
 depend on their position, see e.g. Example \ref{ex4-1} below.

We would like to mention that, in both theorems
above
we only present two-sided estimates of ground state $\phi_1$ for $x \in D_f$ with
$|x|$ large enough. The reason is that the horn-shaped region $D_f$ may not be a $C^{1,1}$
domain even if $f \in C^{1,1} $, because the boundary of
$ D_f$
  at corner near the point
$y_0=\big(0,f(0),0\cdots 0\big)\in
\partial D_f$
is only Lipschitz. (Note that for Lipschitz domain $D$ no explicit estimates for ground state are available even when $D$ is bounded, see e.g.\ \cite{BGR}.)
If we assume additionally that $D_f$ is a $C^{1,1}$ domain, then it is not
difficult to see that Theorem \ref{ex1-1n1} and Theorem \ref{ex1-1n3}
hold true for all $x \in D_f$.

\subsection{Further comments} We will make further comments on the setting and the approach of our paper.

\begin{itemize}
\item[(1)]{\it Brownian motions on horn-shaped regions.}
As mentioned before, the intrinsic ultracontractivity of Dirichlet semigroups for Brownian motions on horn-shaped regions has been studied by many authors.
Our assumptions on horn-shaped regions are more general than
those in previous literatures.
 For example, in \cite[Section 7, p.\ 366]{DS} the reference function $f$ is required to satisfy that $f'(r)/f(r)\to 0$ as $r\to\infty$. In \cite[Proposition 1]{Kw} the function $f$ fulfills that $f'/f$ is bounded. Clearly, the function $f$ given by \eqref{ex5-1-2} does not necessarily satisfy such assumption.
Besides, our estimates in Theorem \ref{ex1-1n1} and Theorem \ref{ex1-1n3} are
 quite precise,
 since they consist both the exit probability
term (i.e., involving the behavior of ground state near the  boundary) and the returning probability term (i.e., involving the behavior of ground state far from the  boundary).

\item[(2)]{\it Symmetric pure-jump processes on bounded open sets.} When $D$ is bounded, there are already a lot of works on the intrinsic ultracontractivity of symmetric jump processes, see \cite{CWi14} and the references therein.
Several differences and difficulties occur when $D$ is unbounded.
For instance, firstly the Dirichlet semigroup
 $(P_t^D)_{t\ge0}$ is always compact when $D$ is bounded; however, it is not true when $D$ is unbounded.
 Secondly  the (uniform) $C^{1,1}$ property
  of open set $D$ has a crucial effect on explicit estimates of ground state in
the bounded open set $D$,
see e.g.\ \cite{CKS,GKK,KKi};
however, as mentioned above, even if we assume that the  reference function of an unbounded horn-shaped region $D$ is a $C^{1,1}$ function,
$D$ only enjoys $C^{1,1}$ characteristics  locally.
\end{itemize}

We believe that our approach, to yield the intrinsic ultracontractivity of Dirichlet semigroups $(P_t^D)_{t\ge0}$ via intrinsic super Poincar\'e inequalities of non-local Dirichlet forms,
is interesting of its own. Such idea has been efficiently used to consider the corresponding problem for Feyman-Kac semigroups of general symmetric jump processes in \cite{CW1,CW16}. Also our methods could be used to study related topics for non-local Dirichlet forms on general metric measure spaces. On the other hand, in order to obtain two-sided estimates for ground state $\phi_1$ some new ideas and techniques are required. In particular, the approach via Harnack inequalities or harmonic measure for Brownian motions (see \cite{DS} and \cite{BB} respectively) and the idea by using uniform boundary Harnack inequality for symmetric $\alpha$-stable L\'evy processes are not feasible
in this paper.
Instead, we make full use of the  formula for the L\'evy system, two-sided estimates for Dirichlet heat kernels and sharp estimates for distributions of the first exit time and the return probability.

\ \

The remainder of the paper is arranged as follows. In the next section, we study the compactness of Dirichlet semigroup $(P_t^D)_{t\ge0}$.
Our criterion on the compactness of $(P_t^D)_{t\ge0}$ is quite general and works, in particular, for
symmetric jump processes with small jumps of high intensity in Example \ref{ex:com2}.
In Section \ref{sec-3} we derive (rough) lower bound estimates for ground state, which is one of key ingredients to establish sufficient conditions for the intrinsic ultracontractivity of $(P_t^D)_{t\ge0}$.
General results about sufficient conditions and necessary conditions for the intrinsic ultracontractivity of $(P_t^D)_{t\ge0}$ are presented in Section \ref{sec-4}.
In Section \ref{sec444}, we will apply the results in Section \ref{sec-4} to study the intrinsic ultracontractivity of $(P_t^D)_{t\ge0}$ associated with the Dirichlet form $(\E,\F)$ with jumping kernel given by \eqref{e:jgeneral} on
two specific unbounded regions, including horn-shaped regions and an unbounded and disconnected open set with locally $\kappa$-fat property.
Proofs of Theorems \ref{ex1-1n1}(1)$(a)$, \ref{ex1-1n1}(2)$(a)$ and \ref{ex1-1n3}$(a)$ are given in the end of subsection \ref{sec4444}.
Finally, by using the characterization of horn-shaped regions and estimates for (Dirichlet) heat kernel, we will obtain two-sided estimates of ground state corresponding to $(P_t^{D_f})_{t\ge0}$ in Section \ref{ss:tsehs}.
Proofs of Theorems \ref{ex1-1n1}(1)$(b)$, \ref{ex1-1n1}(2)$(b)$ and \ref{ex1-1n3}$(b)$ are given after the statement of Theorem \ref{ex1-1h}.

\ \

\noindent{\bf Notations}\,\, Throughout the paper, we use $c$, with or without subscripts, to denote strictly positive finite constants whose values are insignificant and may change from line to line.
We will use \lq\lq$:=$\rq\rq\,\, to denote a
definition, which is read as ``is defined to be\rq\rq.
Denote by $B(x,r)$ the ball with center at $x\in\R^d$ and radius
$r>0$. For $A, B\subset \R^d$, dist$(A,B)=\inf\{|x-y|:x\in A, y\in B\}$.  For any Borel subset $A\subseteq \R^d$,  we denote  $|A|$ the volume of $A$, $A^c$ the complementary set corresponding to $A$, and $\bar{A}$ the closure of the set $A$. For $a \in \R$,
$\lceil a \rceil$ is the smallest integer greater than or equal to $a$, and $\lfloor a\rfloor$ is the largest integer smaller than or equal to $a$.
For a measurable function $f:(0,\infty)\to (0,\infty)$, we use notations
$f^*(r)=\sup_{s\ge r} f(s)$ for all $r>0$, and  $f_*(r)=\inf_{1\le s\le r} f(s)$ for all $r \ge 1$.
For a decreasing function $g:(0,\infty)\to (0,\infty)$, we denote
$g^{-1}(r):=\inf\{s>0: g(s)\le r\}$ for any $r>0$, where $\inf \emptyset:=\infty$. Similarly, for an increasing function $g:(0,\infty)\to (0,\infty)$, we denote $g^{-1}(r):=\inf\{s>0: g(s)\ge r\}$ for $r>0$.
  For open set $D\subset \R^d$, denote by $C_c^\infty(D)$ the set of $C^\infty$ functions on $D$ with compact
  supports.

\section{Compactness of the Dirichlet semigroup}\label{s:iu}
Consider the symmetric Hunt process $X=\{X_t,t\ge0;\Pp^x, x\in \R^d\}$ as in Subsection \ref{subsection1-1}. The associated
Dirichlet form $(\E,\F)$ is given by  \eqref{non-local}, and denote by $p(t,x,y)$ the corresponding transition density function with respect to the Lebesgue measure.

For an open  (not necessarily bounded and connected) subset $D\subset \R^d$, let $X^D$ be the killed process of $X$ upon exiting $D$.
Denote by $(P_t^D)_{t\ge0}$ its associated semigroup on $L^2(D;dx)$, and by $p^D(t,x,y)$ its transition density (or Dirichlet heat kernel).
Recall that we always assume that $p^D(t,\cdot,\cdot)$ is
bounded, continuous and strictly positive on $D\times D$ for every $t>0$.

According to \cite[Theorem 4.4.3 (i)]{FOT}, the Dirichlet form $(\E^D,\F^D)$ associated with  $(P_t^D)_{t\ge0}$ is given by
\begin{equation}\label{e-d}
\begin{split}
  \E^D(f,f)&= \E(f,f)\\
 &=\iint_{D \times D}\big(f(x)-f(y)\big)^2 J(x,y)\,dx\,dy+2\int_D f^2(x)V_D(x)\,dx,\\
\F^D&=\overline{C_c^{\infty}(D)}^{\E_1^D},
\end{split}
\end{equation}
where
\begin{equation*}\label{e:com-1} V_D(x):=\int_{D^c} J(x,y)\,dy, \quad x \in D.\end{equation*}

\begin{definition}\it Let $\Theta$ be a deceasing function on $(0,\infty)$ such that
\begin{equation}\label{e:dhk0}\int_s^\infty \frac{\Theta^{-1}(r)}{r}\,dr<\infty\quad \text{for large } s >0.\end{equation}
We say that the on-diagonal upper bound estimate $\dhk_{\Theta,\le}$ holds for the
Dirichlet heat kernel
 $p^D(t,x,y)$, if
{$$p^D(t,x,y)\le \Theta(t)        \quad \text{ for all }    t>0 \text{ and  } x,y\in D.  $$}
\end{definition}
In particular, according to the relation \eqref{e:dden} between $p(t,x,y)$ and $p^D(t,x,y)$,
if $\dhk_{\Theta,\le}$ holds for $p(t,x,y)$, then $\dhk_{\Theta,\le}$ holds for $p^D(t,x,y)$ for all open subsets $D$.
We next present two examples to show that $\dhk_{\Theta,\le}$ is satisfied for a large class of symmetric jump processes.

\begin{definition}\label{de:J} \it Let $\Phi$ be a strictly increasing function on $(0, \infty)$
satisfying \eqref{e:Phi}.
We say that $\J_{\Phi,\ge}$ holds for the jumping kernel $J(x,y)$, if {there are constants $C_0, r_0>0$ such that
\begin{align}\label{e:JL}
J(x,y) \ge \frac{C_0}{|x-y|^{d} \Phi(|x-y|)},\quad 0<|x-y| \le r_0.
\end{align}}\end{definition}
Without loss of generality, whenever the  assumption $\J_{\Phi,\ge}$ is considered, we will assume that
the constant  $r_0$ in \eqref{e:JL} is $1$.

\begin{example}\label{ex:com1}
According to \cite[Proposition 3.1]{CKK1}, if $\J_{\Phi,\ge}$ is satisfied, then $\dhk_{\Theta,\le }$ holds with
$$\Theta(t)=c_0  \left(\Phi^{-1}(t)^{-d}
\vee t^{-d /2} \right)$$
for some constant $c_0>0$.
In particular, it follows that $\dhk_{\Theta,\le }$ holds for the heat kernel of
symmetric jump processes with small jumps of $\alpha$-stable-like type with $\alpha\in (0,2)$.
\end{example}

\begin{example} \label{ex:com2} Suppose that there is a constant $\beta\in(0,1]$ such that
$$J(x,y)\simeq \frac{1}{|x-y|^{d+2} \log^{1+\beta}\left(\frac{2}{|x-y|}\right)},\quad 0<|x-y|\le 1.$$
 Then, according to \cite[Theorem 1.1]{Ante}, $\dhk_{\Theta,\le}$ holds with
$$\Theta(t)=c_0 (t\wedge 1)^{-d/2}\left(\log \frac{2}{t\wedge 1}\right)^{\beta d/2}$$ for some constant $c_0>0$. This indicates that $\dhk_{\Theta,\le}$ is satisfied for the heat kernel of symmetric
jump processes with small jumps of high
intensity. \end{example}

\ \

The main contribution of this section is the following result.

\begin{theorem}\label{T:compact}
Suppose that $D$ is an open subset of $\R^d$ and  $\dhk_{\Theta,\le}$ holds for Dirichlet heat kernel $p^D(t,x,y)$.
If
\begin{equation}\label{e:com-2} \left|\{x\in D: V_D(x)<r\}\right|<\infty    \quad \text{for every } r>0,    \end{equation}
then the semigroup $(P_t^D)_{t\ge0}$ is compact.
 \end{theorem}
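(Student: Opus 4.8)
The plan is to show that $(P_t^D)_{t\ge0}$ is compact on $L^2(D;dx)$ by verifying that the associated Dirichlet form $(\E^D,\F^D)$ has compact resolvent. Since $\dhk_{\Theta,\le}$ holds and $\Theta$ satisfies the Osgood-type condition \eqref{e:dhk0}, the semigroup is ultracontractive in a weak (super-Poincar\'e) sense; the remaining obstruction to compactness is the unboundedness of $D$, which must be controlled through the killing potential $V_D$ and assumption \eqref{e:com-2}. The natural route is to combine a super Poincar\'e inequality coming from $\dhk_{\Theta,\le}$ with the coercivity gained from the term $2\int_D f^2 V_D\,dx$ in \eqref{e-d}, and then invoke a standard criterion (e.g.\ a Rellich–Kondrachov–type / Persson-type argument) stating that a nonnegative self-adjoint operator whose form dominates a ``confining potential'' with relatively compact sublevel sets has compact resolvent.

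Concretely, first I would use the on-diagonal bound $p^D(t,x,y)\le\Theta(t)$ to derive, for every bounded set $A\subset D$, that the truncated form $f\mapsto\E^D(f,f)$ restricted to $\{f\in\F^D:\operatorname{supp} f\subset A\}$ has compact embedding into $L^2(A;dx)$; this is the classical fact that ultracontractivity of the heat semigroup on a fixed set of finite measure implies the semigroup (hence the resolvent) is Hilbert–Schmidt, so in particular compact. Here one uses $\int_A p^D(t,x,x)\,dx\le\Theta(t)|A|<\infty$, giving $\|P^D_t\mathbf 1_A\|_{HS}^2\le\Theta(2t)|A|$ after the semigroup/Chapman–Kolmogorov identity, which bounds the Hilbert–Schmidt norm of the resolvent operator localized to $A$. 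Second, I would handle the ``tail'' $D\setminus A$: choose $A=A_r:=\{x\in D:V_D(x)<r\}$, which by \eqref{e:com-2} has finite Lebesgue measure for every $r$, and estimate, for $f\in\F^D$ supported outside $A_r$,
$$\E^D(f,f)\ \ge\ 2\int_D f^2(x)V_D(x)\,dx\ \ge\ 2r\int_{D\setminus A_r} f^2(x)\,dx\ =\ 2r\|f\|_2^2,$$
so the bottom of the spectrum of the form restricted to functions supported in $D\setminus A_r$ is at least $2r$.

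Putting these together via the standard decomposition (Persson's theorem, or an IMS-type localization): given $\varepsilon>0$ pick $r$ with $2r>\varepsilon^{-1}$, so on the exterior piece the resolvent $(\lambda+L^D)^{-1}$ has norm at most $\varepsilon$ for $\lambda\ge0$, while on $A_r$ (finite measure, ultracontractive) the corresponding localized resolvent is compact by the first step; hence $(\lambda+L^D)^{-1}$ is a norm-limit of compact operators, therefore compact, which is equivalent to compactness of $(P_t^D)_{t\ge0}$. I expect the main obstacle to be the bookkeeping in the localization/IMS step — making the cutoff between $A_r$ and $D\setminus A_r$ compatible with the \emph{nonlocal} form $\E^D$, since cutting off $f$ produces extra jump-energy error terms $\iint (f(x)\varphi(x)-f(y)\varphi(y))^2 J(x,y)\,dx\,dy$ that are not simply bounded by $\E^D(f,f)$; controlling these requires \eqref{e:dfc} and a careful choice of a slowly varying cutoff $\varphi$, together with absorbing the cross terms into the potential bound. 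Once that commutator estimate is in place, the rest is the routine compactness machinery described above.
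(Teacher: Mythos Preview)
Your approach is genuinely different from the paper's. The paper derives a super Poincar\'e inequality for $\E_0^D$ from $\dhk_{\Theta,\le}$ (via \cite[Theorem 4.5]{Wang00}) and then invokes \cite[Corollary 1.2]{WW08}, stated here as Proposition~\ref{p:c1.2}: a super Poincar\'e inequality for $\E_0^D$ with rate $\beta_0$ satisfying \eqref{e:1.4}, together with \eqref{e:com-2}, implies compactness of $(P_t^D)_{t\ge0}$. All the localization work you are trying to do by hand is packaged inside that citation, and the paper's own proof is two lines of bookkeeping.

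Your direct route has a real gap at the IMS step, and the fix you propose does not close it. The problem is not merely that the form is nonlocal; it is that the cutoff must be adapted to $A_r=\{V_D<r\}$, which under \eqref{e:com-2} has finite measure but need not be bounded and can have arbitrarily irregular geometry. A Lipschitz bounded $\varphi$ does satisfy $\sup_x\int(\varphi(x)-\varphi(y))^2J(x,y)\,dy<\infty$ thanks to \eqref{e:dfc}, but for your two pieces to fit together you need $\varphi=1$ on $A_r$ and $\operatorname{supp}\varphi$ of finite measure, forcing $\varphi\approx\I_{A_r}$; nothing guarantees such a $\varphi$ can be taken Lipschitz. A cutoff built from $V_D$ itself (say $\varphi=\psi(V_D/r)$) fares no better, since nothing is assumed about the regularity of $V_D$. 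There is, however, a clean way to make your two ingredients work without any cutoff: write $P_{2t}^D=P_t^D\I_{A_r}P_t^D+P_t^D\I_{D\setminus A_r}P_t^D$. The first summand is compact since $\I_{A_r}P_t^D$ is Hilbert--Schmidt by your step one. For the second, the spectral theorem for the symmetric semigroup gives $\E^D(P_t^Df,P_t^Df)\le(2et)^{-1}\|f\|_2^2$ for every $f\in L^2(D;dx)$, whence $\int_{D\setminus A_r}(P_t^Df)^2\le r^{-1}\int_D(P_t^Df)^2V_D\le(4ert)^{-1}\|f\|_2^2$, so $\|P_t^D\I_{D\setminus A_r}P_t^D\|\le(4ert)^{-1/2}\to0$ as $r\to\infty$, exhibiting $P_{2t}^D$ as a norm limit of compact operators.
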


To prove Theorem \ref{T:compact}, we first cite \cite[Corollary 1.2]{WW08} in our setting. Let $(\E^D,\F^D)$ be the Dirichlet form associated with $(P_t^D)_{t\ge0}$, which is given by \eqref{e-d}.
Define $$
\E_0^D(f,f)= \iint_{D\times D}\big(f(x)-f(y)\big)^2 J(x,y)\,dx\,dy+\int_D f^2(x)V_D(x)\,dx,\quad f\in \F^D.$$
Since
\begin{equation*}\label{t2-1}
\E^D(f,f)=\E_0^D(f,f)+\int_D f^2(x)V_D(x)\,dx,\quad f \in \F^D,
\end{equation*} $(\E_0^D, \F^D)$ is also a regular Dirichlet form on $L^2(D;dx)$, and $(\E^D,\F^D)$ can be seen as the perturbation of $(\E_0^D, \F^D)$ with potential $V_D$.
Note that, because $V_D\ge0$, \cite[(1.3)]{WW08} holds trivially. Then, according to \cite[Corollary 1.2]{WW08}, we have the following statement.

\begin{proposition}\label{p:c1.2}
Suppose that condition \eqref{e:com-2} is satisfied
and that the following super Poincar\'{e} inequality holds
\begin{align}\label{p:sPi}
\int_D f^2(x)\,dx\le s \E_0^{D}(f,f)+
\beta_0(s) \left(\int_D |f|(x)\,dx\right)^2,\quad s>0, f\in \F^D
\end{align}
 with a decreasing function $\beta_0: (0, \infty) \to (0, \infty)$ satisfying
\begin{align}
\label{e:1.4}
 \int_{s}^\infty \frac{\beta_0^{-1}(r)}{r}\,dr < \infty \quad \text{ for large } s >0.
\end{align}
Then the semigroup $(P_t^D)_{t\ge0}$ is compact.
\end{proposition}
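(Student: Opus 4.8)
The plan is to obtain the conclusion from \cite[Corollary 1.2]{WW08}, applied to the decomposition $\E^D(f,f)=\E_0^D(f,f)+\int_D f^2 V_D\,dx$ recorded above, so the task reduces to checking the hypotheses of that corollary in the present setting. Two of them are exactly the assumptions of the proposition: the finiteness of the sublevel sets \eqref{e:com-2}, and the super Poincar\'e inequality \eqref{p:sPi} for $\E_0^D$ with a decreasing rate $\beta_0$ obeying the integrability condition \eqref{e:1.4}. Condition \cite[(1.3)]{WW08} is a Kato-type smallness condition on the negative part of the perturbing potential, and it is vacuous here because $V_D\ge0$. The only point that deserves a word is that $(\E_0^D,\F^D)$ is a regular Dirichlet form on $L^2(D;dx)$: the pure-jump part $f\mapsto\iint_{D\times D}(f(x)-f(y))^2J(x,y)\,dx\,dy$ on $\overline{C_c^\infty(D)}$ is a regular Dirichlet form by \eqref{e:dfc} and \cite[Example 1.2.4]{FOT}; moreover $V_D(x)=\int_{D^c}J(x,y)\,dy$ is bounded on every compact subset of $D$, since for compact $K\subset D$ with $\rho:=\mathrm{dist}(K,D^c)>0$ we have $V_D(x)\le(\rho^2\wedge1)^{-1}\sup_{z\in\R^d}\int_{y\ne z}\min\{1,|z-y|^2\}J(z,y)\,dy<\infty$ for all $x\in K$; hence $V_D(x)\,dx$ is a Radon measure on $D$, and $\E_0^D$ --- this jump form plus the killing term $\int f^2V_D\,dx$ --- is again a regular Dirichlet form with form domain $\F^D$ (the norms $\E_{0,1}^D$ and $\E_1^D$ are equivalent since $\E_0^D\le\E^D\le2\E_0^D$). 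With these facts in hand, \cite[Corollary 1.2]{WW08} gives the compactness of $(P_t^D)_{t\ge0}$.

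For completeness I would also spell out the mechanism underlying \cite[Corollary 1.2]{WW08} in this concrete situation. Let $H\ge0$ denote the self-adjoint generator of $(\E^D,\F^D)$, so that $P_t^D=e^{-tH}$ and $\E^D(g,g)=\|H^{1/2}g\|_2^2$. Since $V_D\ge0$ we have $\E^D\ge\E_0^D$, so \eqref{p:sPi} continues to hold with $\E_0^D$ replaced by $\E^D$; by the standard correspondence between super Poincar\'e inequalities and on-diagonal heat kernel bounds (see \cite{WW08} and the references therein), \eqref{p:sPi} together with \eqref{e:1.4} then forces $\dhk_{\Theta,\le}$ for $p^D(t,x,y)$ with some decreasing $\Theta:(0,\infty)\to(0,\infty)$, that is, $p^D(t,x,y)\le\Theta(t)<\infty$ for all $t>0$ and $x,y\in D$. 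Now fix $t>0$, set $\Omega_r:=\{x\in D:V_D(x)<r\}$, which has finite Lebesgue measure by \eqref{e:com-2}, and split $e^{-tH}=e^{-tH}\I_{\Omega_r}+e^{-tH}\I_{\Omega_r^c}$, where $\I_E$ denotes multiplication by the indicator of $E$. The operator $e^{-tH}\I_{\Omega_r}$ has integral kernel $p^D(t,x,y)\I_{\Omega_r}(y)$, and
\[
\int_{\Omega_r}\int_D p^D(t,x,y)^2\,dx\,dy\le\Theta(t)\int_{\Omega_r}\Big(\int_D p^D(t,x,y)\,dx\Big)\,dy\le\Theta(t)\,|\Omega_r|<\infty,
\]
where we used $\int_D p^D(t,y,x)\,dx\le1$ and the symmetry of $p^D$; hence $e^{-tH}\I_{\Omega_r}$ is Hilbert--Schmidt, in particular compact. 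For the tail term, given $f\in L^2(D;dx)$ put $g:=e^{-(t/2)H}f$; then $g\in\F^D$, $\E^D(g,g)=\langle He^{-tH}f,f\rangle\le(et)^{-1}\|f\|_2^2$, and, since $\E^D(g,g)\ge2\int_Dg^2V_D\,dx$,
\[
\|\I_{\Omega_r^c}e^{-(t/2)H}f\|_2^2=\int_{\Omega_r^c}g^2\,dx\le\frac1r\int_Dg^2V_D\,dx\le\frac1{2r}\,\E^D(g,g)\le\frac1{2ert}\,\|f\|_2^2.
\]
Therefore $\|e^{-tH}\I_{\Omega_r^c}\|_{L^2\to L^2}=\|\I_{\Omega_r^c}e^{-tH}\|_{L^2\to L^2}\le\|\I_{\Omega_r^c}e^{-(t/2)H}\|_{L^2\to L^2}\le(2ert)^{-1/2}\to0$ as $r\to\infty$, so $e^{-tH}$ is a limit in operator norm of compact operators and is therefore compact; consequently $(P_t^D)_{t\ge0}$ is compact.

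The bookkeeping in the first paragraph is routine. The substantive ingredient, which is precisely what is imported from \cite{WW08}, is the implication ``super Poincar\'e inequality \eqref{p:sPi} together with the integrability \eqref{e:1.4} yields a finite pointwise on-diagonal bound $p^D(t,\cdot,\cdot)\le\Theta(t)$'' --- here \eqref{e:1.4}, which parallels \eqref{e:dhk0}, is exactly what keeps $\Theta(t)$ finite for every $t>0$ --- together with the way this smoothing estimate is glued to the confinement coming from \eqref{e:com-2}. In carrying this out I would make sure the confinement step relies only on $\E^D\ge\E_0^D\ge\int f^2V_D\,dx$ and on no finer property of the jumping kernel $J$, so that, in keeping with the generality of the statement, nothing beyond \eqref{e:dfc} is ever used about $J$.
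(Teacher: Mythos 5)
Your first paragraph is exactly the paper's own proof: the paper simply notes that $(\E^D,\F^D)$ is the perturbation of the regular Dirichlet form $(\E_0^D,\F^D)$ by the potential $V_D\ge 0$, so that \cite[(1.3)]{WW08} holds trivially, and then invokes \cite[Corollary 1.2]{WW08}. Your additional unpacking of the mechanism (super Poincar\'e inequality with \eqref{e:1.4} giving $p^D(t,x,y)\le\Theta(t)$, a Hilbert--Schmidt piece on $\{V_D<r\}$ via \eqref{e:com-2}, and an operator-norm-small tail controlled by the killing term $2\int_D g^2V_D\,dx\le\E^D(g,g)$) is sound but goes beyond what the paper records.
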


Next, we present the

\begin{proof}[Proof of Theorem $\ref{T:compact}$]
Let $\E_0^{D,*}(f,f)= \frac{1}{2}\E^D(f,f)$. Then, the process
associated with $(\E_0^{D,*}, \F^D)$ is a time-change of the process
associated with $(\E^{D}, \F^D)$. Let $p^{D,*}(t,x,y)$ be transition density of the process
corresponding to $(\E_0^{D,*}, \F^D)$. Then, by $\dhk_{\Theta,\le}$,
\begin{align}
\label{e:e:dhkn}
p^{D,*}(t,x,y)=
p^D(t/2,x,y)\le \Theta(t/2),
\quad t>0, x,y\in D.
\end{align}
Denote by $(P^{D,*}_t)_{t\ge 0}$ the transition semigroup of $(\E_0^{D,*}, \F^D)$. In particular, according to \cite[Theorem 4.5]{Wang00} (or see \cite[Theorem 3.3.15]{WBook}), the following super-Poincar\'{e} inequality holds
$$\int_D f^2(x)\,dx\le s \E_0^{D,*}(f,f)+ \beta_0(s)\left(\int_D |f|(x)\,dx\right)^2,\quad s>0, f\in \F^D,$$
   where
     \begin{align*}\beta_0(s)=\inf_{s\ge t, r\ge0}\frac{t\|P^{D,*}_r\|_{L^1(D;dx)\to L^\infty(D;dx)}}{r} \exp\left(\frac{r}{t}-1\right).
     \end{align*}
     Since
      \begin{align*}
  \beta_0(s) \le \|P^{D,*}_s\|_{L^1(D;dx)\to L^\infty(D;dx)} = \sup_{x,y\in D} p^{D,*}(s,x,y)\le
  \Theta(s/2)
\end{align*}
   and $\E_0^{D,*}(f,f)\le\E_0^D(f,f)$, we arrive at

$$\int_D f^2(x)\,dx\le s \E_0^{D}(f,f)+
\Theta(s/2)
 \left(\int_D |f|(x)\,dx\right)^2,\quad s>0, f\in \F^D.$$
In particular, \eqref{p:sPi}
holds with $\beta_0(s)=
\Theta(s/2),$
 and \eqref{e:1.4} also holds due to \eqref{e:dhk0}.
 Combining these
with \eqref{e:com-2},  we know that all the
assumptions of Proposition \ref{p:c1.2} are satisfied,
so $(P_t^D)_{t \ge 0}$ is compact by Proposition \ref{p:c1.2}.
\end{proof}

As a direct consequence of Theorem \ref{T:compact}, we immediately have

\begin{corollary}\label{c:com}
Suppose that $\dhk_{\Theta,\le}$
holds. Then, the semigroup $(P_t^D)_{t\ge0}$ is compact, if either $D$ has finite Lebesgue measure or
\begin{equation}\label{e:com-3}\lim_{ x\in D\textrm{ and }|x|\to\infty}V_D(x)=\infty.\end{equation}
\end{corollary}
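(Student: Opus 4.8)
The plan is to deduce both assertions directly from Theorem \ref{T:compact} by verifying its two hypotheses. First I would recall the observation made right after the definition of $\dhk_{\Theta,\le}$: because of the relation \eqref{e:dden} between $p(t,x,y)$ and $p^D(t,x,y)$, the property $\dhk_{\Theta,\le}$ for $p(t,x,y)$ automatically passes to $p^D(t,x,y)$ for \emph{every} open set $D$. Hence the on-diagonal hypothesis of Theorem \ref{T:compact} is in force for free, and it only remains to check the level-set condition \eqref{e:com-2}, namely that $\left|\{x\in D:V_D(x)<r\}\right|<\infty$ for every $r>0$.

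In the first case, where $D$ has finite Lebesgue measure, this is immediate: $\{x\in D:V_D(x)<r\}\subseteq D$ and $|D|<\infty$, so \eqref{e:com-2} holds and Theorem \ref{T:compact} applies at once.

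In the second case one assumes \eqref{e:com-3}. I would fix $r>0$ and use \eqref{e:com-3} to produce $R=R(r)>0$ with $V_D(x)\ge r$ for all $x\in D$ with $|x|\ge R$. Then $\{x\in D:V_D(x)<r\}\subseteq D\cap B(0,R)$, a set of finite Lebesgue measure, so \eqref{e:com-2} again holds and Theorem \ref{T:compact} gives the compactness of $(P_t^D)_{t\ge0}$.

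Since the argument is just a matter of unwinding the definitions, I do not expect any genuine obstacle here; the only step that deserves a moment's attention — the transfer of the on-diagonal bound from $p$ to $p^D$ — has already been settled in the text via \eqref{e:dden}.
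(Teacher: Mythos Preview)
Your proof is correct and is exactly the verification the paper has in mind when it calls the corollary ``a direct consequence of Theorem \ref{T:compact}'': one simply checks \eqref{e:com-2} in each of the two cases, as you do. One small remark: since the definition of $\dhk_{\Theta,\le}$ is stated for $p^D$, the hypothesis of the corollary already refers to $p^D$, so the transfer step via \eqref{e:dden} is not actually needed here (though it is harmless).
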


The result of above corollary
under
assumption \eqref{e:com-3} extends \cite[Lemma 2]{Kw},
where symmetric $\alpha$-stable L\'evy processes is considered.

\ \

In the remainder of this paper, we are concerned with the case that \eqref{e:com-3} holds true.
In order to verify \eqref{e:com-3}, it is necessary to consider lower bounds of $V_D(x)$.
Thus we will study it under the assumption $\J_{\Phi,\ge}$.
In fact, in the proof below we can relax the assumption  in \eqref{e:Phi} to $0\le \la\le \ua\le 2$ and $\Phi(0+)=0$ (instead of
$0< \la\le \ua< 2$).

Recall that a Borel subset $U$ is called $(\kappa,r)$-fat at
$x\in\overline{U}$, if there exists a point $\xi_x \in U$ such that
$B(\xi_x, \kappa r)\subseteq U\cap B(x,r)$. We say that $U$ is
$\kappa$-fat, if there exists a constant $R_0>0$ such that $U$ is
$(\kappa,r)$-fat at every $x \in \overline{U}$ for each $r \in
(0,R_0]$.
\begin{proposition}\label{p3-1}
Suppose that $D$ is an open subset of $\R^d$ and that $\J_{\Phi,\ge}$ holds.
If
there is a constant $\kappa\in (0,1)$ such that for any $x\in D$, there exist a constant $R_x\in (0,1)$
and $z_x\in \partial D$ with
$|x-z_x|=\delta_D(x)$ satisfying that $D^c$ is $(\kappa,r)$-fat at $z_x\in \partial D$ for each $r\in (0,R_x]$, then
 there is a constant $c_1>0$ such that
\begin{equation}\label{p3-1-1}
V_D(x)\ge c_1\frac{(\delta_D(x)\wedge R_x )^d}{\delta_D(x)^d\Phi(\delta_D(x))}\quad \text{ for all } x\in D \text{ with } \delta_D(x) \le \frac{1}{2}.
\end{equation}
In particular, if $D^c$ is $\kappa$-fat, then
there exists a constant $c_2>0$ such that
\begin{equation}\label{p3-1-1-1}V_D(x)\ge \frac{c_2}{\Phi(\delta_D(x))}\quad  \text{ for all } x\in D \text{ with }\delta_D(x) \le \frac{1}{2} .\end{equation}
\end{proposition}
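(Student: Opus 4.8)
The plan is to bound $V_D(x)$ from below by restricting the integral $\int_{D^c} J(x,y)\,dy$ to the fat ball $B(\xi_{z_x}, \kappa r) \subseteq D^c \cap B(z_x, r)$ furnished by the $(\kappa,r)$-fatness hypothesis, and then to use $\J_{\Phi,\ge}$ together with the weak scaling \eqref{e:Phi} of $\Phi$ to estimate $J(x,y)$ from below for $y$ in that ball. First I would fix $x \in D$ with $\delta_D(x)\le \tfrac12$ and set $\rho := \delta_D(x) \wedge R_x$. Applying the hypothesis with $r=\rho$ gives a point $\xi := \xi_{z_x}\in D^c$ with $B(\xi,\kappa\rho)\subseteq D^c \cap B(z_x,\rho)$. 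For $y\in B(\xi,\kappa\rho)$ one has, by the triangle inequality, $|x-y|\le |x-z_x| + |z_x-\xi| + |\xi - y| \le \delta_D(x) + \rho + \kappa\rho \le 4\rho \le 1$, so $\J_{\Phi,\ge}$ (with $r_0=1$) applies; hence
\[
V_D(x) \ge \int_{B(\xi,\kappa\rho)} J(x,y)\,dy \ge \int_{B(\xi,\kappa\rho)} \frac{C_0}{|x-y|^d\,\Phi(|x-y|)}\,dy.
\]

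Next I would bound the integrand uniformly on $B(\xi,\kappa\rho)$. Since $|x-y|\le 4\rho$ and since $t\mapsto t^d\Phi(t)$ is increasing, we get $|x-y|^d\Phi(|x-y|) \le (4\rho)^d \Phi(4\rho)$, and by \eqref{e:Phi}, $\Phi(4\rho)\le \uC\, 4^{\ua}\,\Phi(\rho)$. Therefore the integrand is at least a constant multiple of $\rho^{-d}\Phi(\rho)^{-1}$, and since $|B(\xi,\kappa\rho)| = c_d (\kappa\rho)^d$, we obtain
\[
V_D(x) \ge c\,(\kappa\rho)^d \cdot \frac{1}{\rho^d\,\Phi(\rho)} = \frac{c\,\kappa^d}{\Phi(\rho)} = \frac{c\,\kappa^d}{\Phi(\delta_D(x)\wedge R_x)}.
\]
To recast this in the form \eqref{p3-1-1}, I would use \eqref{e:Phi} once more: writing $\Phi(\delta_D(x)\wedge R_x)^{-1} \ge c\,\Phi(\delta_D(x))^{-1} (\delta_D(x)\wedge R_x)^{d}/\delta_D(x)^{d}$ — which follows because $\Phi(\delta_D(x))/\Phi(\delta_D(x)\wedge R_x) \le \uC (\delta_D(x)/(\delta_D(x)\wedge R_x))^{\ua} \le \uC (\delta_D(x)/(\delta_D(x)\wedge R_x))^{d}$ when $\ua \le d$; if $d=1$ and $\ua$ could exceed $1$ one argues slightly differently, but in the relaxed range $0\le\ua\le 2$ a crude bound suffices since $\delta_D(x)\le \tfrac12$ forces $\delta_D(x)/(\delta_D(x)\wedge R_x)\ge 1$ — this yields \eqref{p3-1-1} with a new constant $c_1>0$.

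Finally, for the ``in particular'' statement, if $D^c$ is $\kappa$-fat then by definition there is a global $R_0>0$ such that $D^c$ is $(\kappa,r)$-fat at every boundary point for all $r\le R_0$; in particular one may take $R_x \equiv R_0$ for all $x$. Plugging $R_x = R_0$ into \eqref{p3-1-1}: for $\delta_D(x)\le R_0 \wedge \tfrac12$ the factor $(\delta_D(x)\wedge R_0)^d/\delta_D(x)^d = 1$, giving $V_D(x)\ge c/\Phi(\delta_D(x))$ directly; for $R_0\wedge\tfrac12 < \delta_D(x)\le \tfrac12$ (only possible if $R_0<\tfrac12$) the quantity $\delta_D(x)$ ranges over a bounded interval bounded away from $0$, so $\Phi(\delta_D(x))$ is bounded above and below by positive constants and the bound \eqref{p3-1-1-1} follows after adjusting the constant. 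The main obstacle I anticipate is purely bookkeeping: making sure the triangle-inequality estimate keeps $|x-y|$ below the threshold $r_0=1$ (hence the choice $\rho = \delta_D(x)\wedge R_x$ with both $R_x<1$ and $\delta_D(x)\le\tfrac12$, which is exactly why these hypotheses are imposed), and tracking how the scaling exponents $\la,\ua$ interact with the dimension $d$ when converting the clean bound $c/\Phi(\delta_D(x)\wedge R_x)$ into the stated form \eqref{p3-1-1}.
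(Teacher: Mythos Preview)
Your approach is the same as the paper's, but there is a genuine error in the triangle-inequality step. You claim $|x-y|\le \delta_D(x)+\rho+\kappa\rho\le 4\rho$, where $\rho=\delta_D(x)\wedge R_x$. This fails whenever $R_x<\delta_D(x)$: then $\rho=R_x$ while $|x-z_x|=\delta_D(x)$ can be much larger than $\rho$, so $\delta_D(x)+\rho+\kappa\rho$ is not controlled by any multiple of $\rho$. (Concretely, take $\delta_D(x)=\tfrac12$ and $R_x=10^{-6}$.) As a consequence, your intermediate bound $V_D(x)\ge c\kappa^d/\Phi(\rho)$ is false in general: letting $R_x\to 0$ with $\delta_D(x)$ fixed sends the right-hand side to $+\infty$, but there is no reason for $V_D(x)$ to blow up. The subsequent ``conversion'' to the form \eqref{p3-1-1} cannot rescue this, and indeed your own discussion of the $d=1$, $\ua>1$ case reveals the mismatch.

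The fix is exactly what the paper does and is simpler than your route. Since $y\in B(\xi,\kappa\rho)\subseteq B(z_x,\rho)$, you have directly $|x-y|\le |x-z_x|+|z_x-y|<\delta_D(x)+\rho\le 2\delta_D(x)\le 1$, so $\J_{\Phi,\ge}$ gives $J(x,y)\ge c\,\delta_D(x)^{-d}\Phi(\delta_D(x))^{-1}$ (using \eqref{e:Phi} to absorb the factor $2$). Integrating this constant lower bound over the ball $B(\xi,\kappa\rho)$ of volume $c'(\kappa\rho)^d$ yields
\[
V_D(x)\ \ge\ \frac{c''(\delta_D(x)\wedge R_x)^d}{\delta_D(x)^d\,\Phi(\delta_D(x))}
\]
immediately, with no conversion step needed. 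Your treatment of the ``in particular'' case is fine.
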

\begin{proof}
Noticing that $D^c$ is
$(\kappa,\delta_D(x)\wedge R_x )$-fat at $z_x\in \partial D$, we
can find a point $\xi_{z_x} \in D^c$ such that
$$B(\xi_{z_x},\kappa
(\delta_D(x)\wedge R_x) )\subseteq D^c \cap B(z_x,\delta_D(x)\wedge R_x ).$$
Since for $x\in D$ with $\delta_D(x)<{1}/{2}$ and $y\in B(\xi_{z_x},\kappa (\delta_D(x)\wedge R_x))\subseteq D^c \cap B(z_x,\delta_D(x)\wedge R_x )$,
   $$|y-x|\le |y-z_x|+|z_x-x|<(\delta_D(x)\wedge R_x) + \delta_D(x) \le 2\delta_D(x)\le  1,$$
we have, by $\J_{\Phi,\ge}$ (i.e., \eqref{e:JL}),
$$J(x,y)\ge c_1|x-y|^{-d}   \Phi(|x-y|)^{-1}  \ge c_2
(\delta_D(x))^{-d}
\Phi(\delta_D(x))^{-1}. $$
Therefore,
\begin{equation*}
\begin{split}
V_D(x)&=\int_{D^c} J(x,y)\,dy\ge \int_{B(\xi_{z_x},\kappa
(\delta_D(x)\wedge R_x))}J(x,y)\,dy\\
&\ge \frac{c_2}{\delta_D(x)^d  \Phi(\delta_D(x))}  \int_{B(\xi_{z_x},\kappa
(\delta_D(x)\wedge R_x) )}\,dy  \\
&\ge \frac{c_3  (\delta_D(x)\wedge R_x )^d}
{\delta_D(x)^d\Phi(\delta_D(x))   } ,
\end{split}
\end{equation*}
which proves \eqref{p3-1-1}.

 Furthermore, if $D^c$ is $\kappa$-fat, then we can find some constant $R_0\in (0,1)$ such that $R_x=R_0$ for all $x\in D$ in the argument above, and so the second assertion follows.
\end{proof}

Combining Example \ref{ex:com1} and Corollary \ref{c:com} with Proposition \ref{p3-1}, we have the following simple sufficient condition for the compactness of $(P_t^D)_{t\ge0}$.
\begin{corollary}  \label{c:kacomp}
Suppose $\J_{\Phi,\ge}$
holds. If $D^c$ is $\kappa$-fat and
\begin{equation}\label{e:do}\lim_{x\in D \text{ and } |x|\to \infty} \delta_D(x)=0,\end{equation} then the semigroup $(P_t^D)_{t\ge0}$ is compact. \end{corollary}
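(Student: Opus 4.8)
The plan is to combine three tools that are all available by the time Corollary~\ref{c:kacomp} is stated: the compactness criterion of Corollary~\ref{c:com} (under $\dhk_{\Theta,\le}$ together with $\lim_{|x|\to\infty, x\in D}V_D(x)=\infty$), the on-diagonal heat kernel bound $\dhk_{\Theta,\le}$ furnished by Example~\ref{ex:com1}, and the lower bound on $V_D$ coming from Proposition~\ref{p3-1}. So first I would observe that since $\J_{\Phi,\ge}$ holds, Example~\ref{ex:com1} gives $\dhk_{\Theta,\le}$ for $p(t,x,y)$ with $\Theta(t)=c_0(\Phi^{-1}(t)^{-d}\vee t^{-d/2})$, and by the remark following the definition of $\dhk_{\Theta,\le}$ (using \eqref{e:dden}) this same bound passes to $p^D(t,x,y)$ for every open $D$. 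Hence the first hypothesis of Corollary~\ref{c:com} is automatic, and everything reduces to verifying \eqref{e:com-3}, namely $V_D(x)\to\infty$ as $|x|\to\infty$ in $D$.

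Next I would feed the $\kappa$-fatness of $D^c$ into Proposition~\ref{p3-1}. Its second assertion \eqref{p3-1-1-1} says precisely that, since $D^c$ is $\kappa$-fat, there is a constant $c_2>0$ with $V_D(x)\ge c_2/\Phi(\delta_D(x))$ for all $x\in D$ with $\delta_D(x)\le 1/2$. Now use \eqref{e:do}: as $|x|\to\infty$ with $x\in D$ we have $\delta_D(x)\to 0$, so in particular $\delta_D(x)\le 1/2$ for all such $x$ with $|x|$ large enough, and then $V_D(x)\ge c_2/\Phi(\delta_D(x))$. Since $\Phi$ is strictly increasing with $\Phi(0+)=0$ (this is noted right after \eqref{e:Phi}, and is also guaranteed once $\la>0$), we get $\Phi(\delta_D(x))\to 0$ and therefore $V_D(x)\to\infty$. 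This establishes \eqref{e:com-3}.

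Finally I would invoke Corollary~\ref{c:com}: both its hypotheses — $\dhk_{\Theta,\le}$ and \eqref{e:com-3} — now hold, so $(P_t^D)_{t\ge0}$ is compact, which is the claim.

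Honestly there is no real obstacle here: the statement is a bookkeeping corollary assembling Example~\ref{ex:com1}, Proposition~\ref{p3-1}, and Corollary~\ref{c:com}. The only point requiring a line of care is the transition from ``$\delta_D(x)\to 0$'' to ``eventually $\delta_D(x)\le 1/2$'' so that the range restriction in \eqref{p3-1-1-1} is met, together with noting that $\Phi(\delta_D(x))\to 0$ forces $V_D(x)\to\infty$; one should also record explicitly that $\dhk_{\Theta,\le}$ for $p(t,x,y)$ transfers to $p^D(t,x,y)$ via \eqref{e:dden}, as already remarked in the text. If one wanted a self-contained one-paragraph proof it would read essentially: ``By $\J_{\Phi,\ge}$ and Example~\ref{ex:com1}, $\dhk_{\Theta,\le}$ holds for $p^D$; by \eqref{e:do} and \eqref{p3-1-1-1}, $V_D(x)\ge c_2/\Phi(\delta_D(x))\to\infty$ as $|x|\to\infty$ in $D$; now apply Corollary~\ref{c:com}.''
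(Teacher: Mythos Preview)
Your proposal is correct and matches the paper's own approach: the paper states Corollary~\ref{c:kacomp} as an immediate consequence of combining Example~\ref{ex:com1}, Corollary~\ref{c:com}, and Proposition~\ref{p3-1}, which is precisely the assembly you carry out. The only detail you add beyond the paper's one-line justification is the explicit check that \eqref{e:do} eventually places $\delta_D(x)\le 1/2$ so that \eqref{p3-1-1-1} applies, which is fine.
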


\section{Lower bound estimates of ground state}\label{sec-3}
In Section \ref{s:iu}, we considered the symmetric Hunt process $X=\{X_t,t\ge0;\Pp^x, x\in \R^d\}$ as in Subsection \ref{subsection1-1}.
Throughout this section, we continue considering  the symmetric Hunt process $X=\{X_t,t\ge0;\Pp^x, x\in \R^d\}$ as in Subsection \ref{subsection1-1}. Suppose that $D$ is a fixed open subset of $\R^d$, and  we assume that  the Dirichlet semigroup $(P_t^D)_{t\ge0}$ is compact.
Then, by assumption that $p^D(t,\cdot,\cdot)$ is bounded, continuous and strictly positive on $D\times D$ for every $t>0$, and the standard theory for symmetric compact semigroups, see e.g.\ \cite[Theorem VI.\ 16]{RS1} and \cite[Theorem XIII. 43]{RS2}, there
exists a complete orthonormal set of eigenfunctions $\{\phi^D_n\}$
such that $P_t^D\phi^D_n(x)=e^{-\lambda_n^Dt}\phi^D_n(x)$ for all
$t>0$ and $x\in D$, where $\{\lambda_n^D\}_{n\ge1}$ are eigenvalues of
$(P_t^D)_{t\ge0}$ such that $0<\lambda_1^D< \lambda_2^D\le
\lambda^D_3\le \cdots\to \infty$ as $n\to \infty$; moreover, the first eigenfunction $\phi^D_1$ can be chosen to be bounded, continuous and strictly positive on $D$ (see Proposition \ref{p5-1-0} in the Appendix for this fact). In the literature, $\phi^D_1$ is
called ground state.
In what follows, we write $\phi^D_1$ as $\phi_1$ for simplicity.

This section is devoted to driving lower bound estimates for $\phi_1$, which is one of key
ingredients to establish sufficient conditions for the intrinsic ultracontractivity of $(P_t^D)_{t \ge 0}$.

We begin with the following simple lemma.

\begin{lemma}\label{L:lower1}
For any  relatively compact open set $D_0\subset D$
and $t_0>0$, $$\phi_1(x)\ge   c_{t_0, D_0}  P_{t_0}^D\I_{D_0}(x),\quad x\in D,$$
where $c_{t_0, D_0}:=  e^{\lambda_1t_0}  \left(\inf_{y\in D_0} \phi_1(y)\right) >0.$
\end{lemma}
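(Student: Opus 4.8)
The plan is to exploit the eigenfunction property of $\phi_1$ together with the semigroup property. Since $P_t^D \phi_1 = e^{-\lambda_1 t}\phi_1$ for every $t>0$, we can write, for any $x \in D$ and the fixed $t_0>0$,
\[
\phi_1(x) = e^{\lambda_1 t_0} P_{t_0}^D \phi_1(x) = e^{\lambda_1 t_0} \int_D p^D(t_0,x,y)\,\phi_1(y)\,dy.
\]
The idea is then to discard the mass of the integral outside $D_0$ and bound $\phi_1$ from below on $D_0$ by its infimum there, which is strictly positive because $D_0$ is relatively compact in $D$ and $\phi_1$ is continuous and strictly positive on $D$ (so it attains a positive minimum on $\overline{D_0}\subset D$).

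Carrying this out: since $p^D(t_0,x,y)\ge 0$ and $\phi_1 \ge 0$ on $D$,
\[
\phi_1(x) = e^{\lambda_1 t_0}\int_D p^D(t_0,x,y)\phi_1(y)\,dy
\ge e^{\lambda_1 t_0}\int_{D_0} p^D(t_0,x,y)\phi_1(y)\,dy
\ge e^{\lambda_1 t_0}\Big(\inf_{y\in D_0}\phi_1(y)\Big)\int_{D_0} p^D(t_0,x,y)\,dy.
\]
The last integral is exactly $P_{t_0}^D \I_{D_0}(x)$, so with $c_{t_0,D_0} := e^{\lambda_1 t_0}\big(\inf_{y\in D_0}\phi_1(y)\big)$ we obtain the claimed bound $\phi_1(x)\ge c_{t_0,D_0} P_{t_0}^D\I_{D_0}(x)$ for all $x\in D$. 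The positivity of $c_{t_0,D_0}$ follows since $\overline{D_0}$ is compact and contained in $D$, and $\phi_1$ is continuous and strictly positive on $D$.

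There is essentially no obstacle here; the only point requiring a word of care is that $\inf_{y\in D_0}\phi_1(y)>0$, which uses the relative compactness of $D_0$ in $D$ (so that the infimum is a minimum over a compact subset of $D$) rather than merely boundedness away from zero on $D$ itself, which need not hold when $D$ is unbounded. The eigenvalue $\lambda_1 = \lambda_1^D$ is finite and positive, so $e^{\lambda_1 t_0}$ is a finite positive constant, and the interchange of summation/integration is justified since all quantities are nonnegative. This completes the proof.
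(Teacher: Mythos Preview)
Your proof is correct and follows essentially the same approach as the paper: both use the eigenfunction identity $P_{t_0}^D\phi_1 = e^{-\lambda_1 t_0}\phi_1$, restrict the integral to $D_0$, and bound $\phi_1$ below by its infimum on $D_0$, invoking continuity and strict positivity of $\phi_1$ together with the relative compactness of $D_0$ to ensure that this infimum is positive. The only cosmetic difference is that the paper runs the chain of inequalities starting from $P_{t_0}^D\I_{D_0}(x)$ and bounding it above, whereas you start from $\phi_1(x)$ and bound it below.
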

\begin{proof}
For any $t_0>0$ and $x\in D$, \begin{align*} P_{t_0}^D\I_{D_0}(x)&=\int_{D_0} p^D(t_0,x,y)\,dy\le \frac{1}{\inf_{y\in D_0}\phi_1(y)}\int_{D_0}p^D(t_0,x,y)\phi_1(y)\,dy \\
&\le  \frac{1}{\inf_{y\in D_0}\phi_1(y)}\int_{D}p^D(t_0,x,y)\phi_1(y)\,dy=  \frac{P^D_{t_0}\phi_1(x)}{\inf_{y\in D_0}\phi_1(y)}= \frac{e^{-\lambda_1t_0}\phi_1(x)}{\inf_{y\in D_0}\phi_1(y)},\end{align*}
where we have used the property that
$\inf_{y\in D_0} \phi_1(y)>0$ for any relatively compact open set $D_0\subset D$, thanks to the fact that
$\phi_1$ is continuous and strictly positive.
The desired assertion follows from the inequality above. \end{proof}

Recall that for any $x\in \R^d$, stopping time $\tau$ (with respect to the natural filtration of the precess $X$), and non-negative measurable function $f$ on $[0,\infty)\times \R^d\times \R^d$ with $f(s, z, z)=0$ for all $z\in\R^d$ and $s\ge 0$, we have the following  L\'evy system:
\begin{equation}\label{e:levy}
\Ee^x \left[\sum_{s\le \tau} f(s,X_{s-}, X_s) \right] = \Ee^x \left[ \int_0^\tau \left(\int_{\R^d} f(s,X_s, z) J(X_s,z)\, dz \right)\, ds \right],
\end{equation}
see e.g.\ \cite[Lemma 4.7]{CK} and \cite[Appendix A]{CK1}.

Let $\Psi$ be an increasing function on $(0, \infty)$ satisfying that there exist constants  $ 0<\beta_1\le \beta_2<\infty$ and $0<c_1\le c_2<\infty$ such that
\begin{align}
\label{e:Psi}
c_1\left(\frac{R}{r}\right)^{\beta_1}\le \frac{\Psi(R)}{\Psi(r)}\le c_2 \left(\frac{R}{r}\right)^{\beta_2}, \quad 0<r\, \le  R.
\end{align} For any $A\subset \R^d$,
the first exit time
from $A$ of  the process $X$ is defined by
$$\tau_A=\inf\{t>0:X_t\in A^c\}.$$

\begin{definition}\label{d:ext} \it Let $\Psi$ be an increasing function on $(0, \infty)$ such that \eqref{e:Psi} holds.
We say that
$\ext_{\Psi,\ge}$ holds on $D$,
if there exist constants $C_0, \tilde r_0 >0$ and $C_1\in (0,1)$ such that for all
$r\in (0, \tilde r_0 ]$,
\begin{equation}\label{e:exit}
\inf_{x \in D, B(x,r) \subset D}
\Pp^x\left(\tau_{B(x,r)}> C_0\Psi(r) \right)\ge C_1.
\end{equation}
For simplicity, we say $\ext_{\Psi,\ge}$ holds if $\ext_{\Psi,\ge}$ holds on $\R^d$.
\end{definition}
Obviously, for any open subset $D_1\subseteq D_2 \subseteq \R^d$,
if $\ext_{\Psi,\ge}$ holds on $D_2$, then it also holds on $D_1$.
\begin{example} \label{exp:order}
Suppose that there exist constants $0<\alpha_1\le\alpha_2<2$
and $c_1,c_2\in(0,\infty)$ such that
$$
\frac{c_1}{|x-y|^{d+\alpha_1}}\le J(x,y)\le \frac{c_2}{|x-y|^{d+\alpha_2}}, \quad
0<|x-y|\le 1.$$
Then, by \cite[Lemma 3.1]{CW16} (or \cite[Theorem 2.1]{BKK}),
$\ext_{\Psi,\ge}$ holds on $\R^d$ with $$\Psi(r)=r^{\alpha_2+(\alpha_2-\alpha_1)d/\alpha_1}.$$ \end{example}

\ \

\begin{proposition}\label{p:lower1}\,\,
Suppose that
$\ext_{\Psi,\ge}$ holds on $D$.
For any fixed $x_0\in D$, set $r_{x_0}= ({\delta_D(x_0)}/{4})
\wedge \tilde r_0$, where $\tilde r_0>0$ is the constant in \eqref{e:exit}. Then, there exists a constant $c:=c(x_0,r_{x_0},D)>0$ such that for all $x\in D$,
\begin{equation}\label{e:lower1-1}\phi_1(x)\ge c \Psi( \delta_D(x)\wedge r_{x_0} )\left( \inf_{|y-z|\le |x-x_0|+2r_{x_0}} J(y,z)\right).\end{equation}  \end{proposition}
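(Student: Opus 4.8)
The plan is to combine the rough lower bound from Lemma \ref{L:lower1} with a one-jump estimate for the semigroup $P^D_{t_0}$ obtained through the L\'evy system \eqref{e:levy} and the exit time lower bound $\ext_{\Psi,\ge}$. Fix $x_0\in D$ and set $r_{x_0}=(\delta_D(x_0)/4)\wedge\tilde r_0$. First I would apply Lemma \ref{L:lower1} with the relatively compact open set $D_0:=B(x_0,r_{x_0})$ (note $\overline{D_0}\subset B(x_0,\delta_D(x_0)/2)\subset D$ by the choice of $r_{x_0}$) and a suitable $t_0>0$ — taking $t_0=C_0\Psi(r_{x_0})$ is natural, where $C_0$ is the constant in \eqref{e:exit}. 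This reduces matters to bounding $P^D_{t_0}\I_{D_0}(x)=\Pp^x(X^D_{t_0}\in D_0)$ from below for every $x\in D$.

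The core step is the one-jump lower bound: starting from $x\in D$, the process $X^D$ can reach $D_0$ before time $t_0$ by (i) staying inside the small ball $B(x,\rho)$ with $\rho:=\delta_D(x)\wedge r_{x_0}$ long enough, (ii) making one jump from a point near $x$ directly into $D_0$, and (iii) then staying in $D_0$ (or at least in $D$) until time $t_0$. Quantitatively, I would write, for a stopping-time/Markov-property decomposition,
\begin{equation*}
\Pp^x\big(X^D_{t_0}\in D_0\big)\ge \Pp^x\Big(\text{a jump from }B(x,\rho/2)\text{ into }D_0\text{ occurs before }\tau_{B(x,\rho)}\wedge t_1\Big)\cdot\inf_{z\in D_0}\Pp^z\big(\tau_{D_0}>t_0\big),
\end{equation*}
with $t_1$ chosen comparable to $\Psi(\rho)$. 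The last infimum is bounded below by a positive constant $c(x_0,r_{x_0},D)$ using $\ext_{\Psi,\ge}$ on $D$ applied to a ball inside $D_0$ (or directly since $D_0$ is a fixed relatively compact set). For the middle factor, the L\'evy system \eqref{e:levy} with $f(s,w,z)=\I_{B(x,\rho/2)}(w)\I_{D_0}(z)$ and stopping time $\tau=\tau_{B(x,\rho)}\wedge t_1$ gives
\begin{equation*}
\Ee^x\!\left[\sum_{s\le\tau}f(s,X_{s-},X_s)\right]=\Ee^x\!\left[\int_0^\tau\I_{B(x,\rho/2)}(X_s)\Big(\int_{D_0}J(X_s,z)\,dz\Big)ds\right]\ge |D_0|\Big(\inf_{w\in B(x,\rho/2),\,z\in D_0}J(w,z)\Big)\,\Ee^x\big[\tau\wedge\sigma\big],
\end{equation*}
where $\sigma$ is the time of first exit from $B(x,\rho/2)$; then $\Ee^x[\tau\wedge\sigma]\gtrsim \Psi(\rho)$ follows from $\ext_{\Psi,\ge}$ (the process stays in $B(x,\rho/2)$ for time $\ge C_0\Psi(\rho/2)$ with probability $\ge C_1$, and by \eqref{e:Psi} $\Psi(\rho/2)\simeq\Psi(\rho)$). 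Since the probability of at least one such jump dominates (a constant times) the expected number truncated appropriately — or more cleanly, one shows directly $\Pp^x(\exists\text{ such jump before }\tau)\ge c\,\Psi(\rho)\inf J$ when the right side is $\le 1$, and the bound is trivial otherwise — I get $P^D_{t_0}\I_{D_0}(x)\ge c\,\Psi(\rho)\inf_{w\in B(x,\rho/2),z\in D_0}J(w,z)$. Finally, since $D_0=B(x_0,r_{x_0})$, any $w\in B(x,\rho/2)$ and $z\in D_0$ satisfy $|w-z|\le |x-x_0|+\rho/2+r_{x_0}\le |x-x_0|+2r_{x_0}$, so $\inf_{w,z}J(w,z)\ge\inf_{|y-z|\le|x-x_0|+2r_{x_0}}J(y,z)$, and plugging back into Lemma \ref{L:lower1} yields \eqref{e:lower1-1}.

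The main obstacle I anticipate is making the one-jump probability estimate rigorous: one must carefully disentangle the event "a jump lands in $D_0$" from "the process is still alive in $D$" at the jump time, and control that the pre-jump position lies in $B(x,\rho/2)$ (so the jump length is bounded as claimed) while the post-jump position lies in $D_0$ (so survival for the remaining time has a uniform positive lower bound). Using the strong Markov property at the first such jump time and the fact that $\rho\le\delta_D(x)$ guarantees $B(x,\rho)\subset D$ handles the survival-before-the-jump part; the survival-after-the-jump part is where $D_0$ being a \emph{fixed} relatively compact subset of $D$ (independent of $x$) is essential, since it makes $\inf_{z\in D_0}\Pp^z(\tau_{D_0}>t_0)$ a genuine positive constant depending only on $x_0,r_{x_0},D$. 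A secondary technical point is converting the expected-number-of-jumps bound from the L\'evy system into a probability bound, which is routine (the number of such jumps before $\tau$ is at most the number of jumps of size $\ge$ something, but more simply: if $\Ee^x[N]\le 1$ one cannot directly conclude, so I would instead bound $\Pp^x(N\ge 1)$ from below by restricting to the event $\{\tau=t_1\}\cap\{\sigma>t_1\}$ of probability $\ge C_1$ and on that event estimate the conditional jump probability via the compensator, giving the clean linear-in-$\inf J$ lower bound up to the truncation at $1$).
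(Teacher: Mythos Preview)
Your overall strategy---combine Lemma~\ref{L:lower1} with a one-jump estimate from $x$ to a fixed ball around $x_0$, via the L\'evy system and $\ext_{\Psi,\ge}$---is exactly the paper's. But the execution has three gaps that the paper resolves differently.

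\textbf{The L\'evy system step.} You compute an \emph{expected number} of jumps and then try to extract a probability. Your proposed fix (restrict to $\{\sigma>t_1\}$, i.e.\ the process stays in $B(x,\rho/2)$, and estimate the conditional jump rate) is self-defeating: if $B(x,\rho/2)\cap D_0=\emptyset$, then on $\{\sigma>t_1\}$ no jump can have landed in $D_0$, so $N=0$ on that event. The paper avoids this entirely by using the \emph{first exit time} $\tau_{D_1}$ from $D_1:=B(x,\delta_D(x)\wedge r_{x_0})$ as the stopping time. When $D_1$ and the target ball $\tilde D_0:=B(x_0,r_{x_0})$ are disjoint, the L\'evy system gives the \emph{equality}
\[
\Pp^x\!\Big(\tau_{D_1}\le T,\ X_{\tau_{D_1}}\in\tilde D_0\Big)
=\Ee^x\!\int_0^{T}\!\int_{\tilde D_0}J(X^{D_1}_s,z)\,dz\,ds,
\]
which is already a probability and is bounded below by $(\inf J)\,|\tilde D_0|\,T\,\Pp^x(\tau_{D_1}>T)$ with $T=\tfrac{C_0}{2}\Psi(\delta_D(x)\wedge r_{x_0})$. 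No expected-count-to-probability conversion is needed.

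\textbf{Survival after the jump.} Your claim that $\inf_{z\in D_0}\Pp^z(\tau_{D_0}>t_0)>0$ is false: for $z$ near $\partial D_0$ the killing rate $\int_{D_0^c}J(z,y)\,dy$ blows up (by $\J_{\Phi,\ge}$), so $\Pp^z(\tau_{D_0}>t_0)\to 0$. The paper uses two nested balls: the jump lands in $\tilde D_0=B(x_0,r_{x_0})$, but survival is required only in $D_0=B(x_0,2r_{x_0})$. For any $z\in\tilde D_0$ one has $B(z,r_{x_0})\subset D_0$, so $\ext_{\Psi,\ge}$ gives $\Pp^z(\tau_{D_0}>t_0)\ge\Pp^z(\tau_{B(z,r_{x_0})}>C_0\Psi(r_{x_0}))\ge C_1$ uniformly.

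\textbf{The overlapping case.} The disjointness $D_1\cap\tilde D_0=\emptyset$ needed above fails when $|x-x_0|<3r_{x_0}$ and $\delta_D(x)>r_{x_0}$. The paper treats this residual precompact set $D_{x_0}$ separately: $\phi_1$ is continuous and strictly positive so bounded below there, while the right-hand side of \eqref{e:lower1-1} is bounded above (using \eqref{e:dfc} to control $\inf_{|y-z|\le|x-x_0|+2r_{x_0}}J(y,z)$ over this bounded range), so the inequality holds trivially after adjusting the constant.
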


\begin{proof} Fix $x_0\in D$.
For any $x\in D$ such that $|x-x_0|\ge 3r_{x_0}$ or $\delta_D(x)\le r_{x_0}$,
set $D_0=B(x_0,2 r_{x_0})$, $\tilde D_0= B(x_0,r_{x_0})$, $D_1=B(x,\delta_D(x)\wedge r_{x_0})$  and
$t_0=C_0 \Psi (
r_{x_0}),$ where
$C_0>0$ is the constant in \eqref{e:exit}.
Then, it holds that dist$(D_1,\tilde D_0)>0$. Indeed, for any $y\in \tilde D_0$ and $z\in D_1$, if $|x-x_0|\ge3r_{x_0}$, then
\begin{equation*}
|y-z|\ge |x_0-x|-|x-z|-|y-x_0|\ge 3r_{x_0}-r_{x_0}-r_{x_0}=r_{x_0}>0;
\end{equation*}
if $\delta_D(x)\le r_{x_0}$, then,
\begin{equation*}
\begin{split}
|y-z|&\ge |x_0-x|-|x-z|-|y-x_0|\\
&\ge \delta_D(x_0)-\delta_D(x)-\delta_D(x)-r_{x_0}\\
&\ge \delta_D(x_0)-3r_{x_0}\ge \delta_D(x_0)/4>0.
\end{split}
\end{equation*}

Hence,
by strong Markov property,
\begin{equation}\label{e3-3}\begin{split}
&P^{D}_{t_0}\I_{D_0}(x)\\
&\ge \Pp^x\Big(0<\tau_{D_1}\le \frac{C_0}{2}\Psi( \delta_D(x)\wedge r_{x_0} ),
X_{\tau_{D_1}}\in \tilde D_0; \forall s\in[\tau_{D_1}, t_0], X_s\in
D_0\Big)\\
&\ge \Pp^x\Bigg(\Pp^{X_{\tau_{D_1}}}\Big(\tau_{D_0}>t_0\Big); 0<\tau_{D_1}\le \frac{C_0}{2}\Psi( \delta_D(x)\wedge r_{x_0} ),
X_{\tau_{D_1}}\in \tilde D_0\Bigg)\\
&\ge  \Pp^x\Bigg(\Pp^{X_{\tau_{D_1}}}\Big(\tau_{B(X_{\tau_{D_1}},r_{x_0})}>t_0\Big); 0<\tau_{D_1}\le \frac{C_0}{2}\Psi( \delta_D(x)\wedge r_{x_0} ),
X_{\tau_{D_1}}\in \tilde D_0\Bigg)\\
&\ge C_1\Pp^x\Big(0<\tau_{D_1}\le \frac{C_0}{2}\Psi( \delta_D(x)\wedge r_{x_0} ), X_{\tau_{D_1}}\in \tilde D_0\Big),\end{split}
\end{equation}
where in the third and fourth inequalities we have used the fact that for all $y \in \tilde D_0$,
$$\Pp^y\left(\tau_{B(X_{\tau_{D_1}},r_{x_0})}>t_0\right)\ge
\Pp^y(\tau_{B(y, r_{x_0})}>t_0)\ge
C_1>0,$$
thanks to $\ext_{\Psi,\ge}$.

Furthermore, by the L\'{e}vy system in \eqref{e:levy},
\begin{align*}&\Pp^x\Big(0<\tau_{D_1}\le \frac{C_0}{2}\Psi( \delta_D(x)\wedge r_{x_0} ), X_{\tau_{D_1}}\in \tilde D_0\Big)\\
&=\Ee^x  \int_0^ {\frac{C_0}{2}\Psi( \delta_D(x)\wedge r_{x_0} )}\,
\int_{\tilde D_0} J(X_s^{D_1},z)\,dz\,ds\\
&\ge  \left(\inf_{y\in D_1, z\in \tilde D_0} J(y,z) \right)|\tilde D_0| \int_0^ {\frac{C_0}{2}\Psi( \delta_D(x)\wedge r_{x_0} )} \Pp^x(\tau_{D_1}>s)\,ds\\
&\ge  \frac{C_0}{2}\Psi( \delta_D(x)\wedge r_{x_0} )  \left(\inf_{y\in D_1, z\in \tilde D_0} J(y,z) \right)|\tilde D_0| \Pp^x\Big( \tau_{D_1}>\frac{C_0}{2}\Psi( \delta_D(x)\wedge r_{x_0} )\Big)\\
&\ge c_1 \Psi( \delta_D(x)\wedge r_{x_0} ) \left(\inf_{y\in D_1, z\in \tilde D_0} J(y,z)\right), \end{align*}
where in the equality above we have used the fact that dist$(D_1,\tilde D_0)>0$, and in the last inequality we
used $\ext_{\Psi,\ge}$ again.

Combining both estimates above, we arrive at
\begin{align*}P^{D}_{t_0}\I_{D_0}(x)
\ge & c_2 \Psi( \delta_D(x)\wedge r_{x_0} )\left(
\inf_{y\in D_1, z\in \tilde D_0} J(y,z)\right)\\
\ge &c_2 \Psi( \delta_D(x)\wedge r_{x_0} )\left( \inf_{|y-z|\le |x-x_0|+(r_{x_0}+\delta_D(x)\wedge r_{x_0})} J(y,z)\right). \end{align*} This
along with Lemma \ref{L:lower1} yields the desired assertion
for any $x \in D$ with $|x-x_0|\ge 3r_{x_0}$ or $\delta_D(x)\le r_{x_0}$.

Next, we set $D_{x_0}:=\{x \in D: |x-x_0|< 3r_{x_0} \,\,{\rm and }\,\, \delta_D(x)>r_{x_0}\}$. Then, $D_{x_0}$ is a precompact open subset of $D$. Since by assumption $\phi_1$ is continuous and strictly positive on $D$, we have
$\inf_{z \in D_{x_0}}\phi_1(z)\ge c_3$ for some constant $c_3>0$.
While, since by \eqref{e:dfc}
\begin{align*}
 \infty >c_4&:=\sup_{y  \in \R^d} \int_{z\neq y}
(1 \wedge |y-z|^2)
 J(y,z)\,dz\\
 & \ge
(1 \wedge  r_{x_0}^2) \sup_{y  \in \R^d}
 \int_{\{r_{x_0} \le  |y-z|\le 2r_{x_0}\}} J(y,z)\,dz \\
 &\ge (1 \wedge  r_{x_0}^2) |\{r_{x_0} \le  |w|\le 2r_{x_0}  \} |\sup_{y  \in \R^d} \inf_{z\in \R^d: r_{x_0} \le |y-z|\le 2r_{x_0}}  J(y,z),\end{align*}
we have
\begin{align*}
\inf_{|y-z|\le |x-x_0|+2r_{x_0}} J(y,z) &\le
\sup_{y  \in \R^d}  \inf_{z\in\R^d : r_{x_0} \le |y-z|\le 2r_{x_0}} J(y,z) \\
&\le \frac{c_4}{ (1 \wedge  r_{x_0}^2) |\{r_{x_0} \le  |w|\le 2r_{x_0}  \} |}.
\end{align*}
Hence, by changing the constant $c$ properly, we know that
\eqref{e:lower1-1} also holds on $D_{x_0}$.

Therefore, the desired assertion follows from both estimates above.
\end{proof}

Note that the right hand side of \eqref{e:lower1-1} is zero if the
process $X$ has finite range jumps and $|x-x_0|$ is large enough. Thus, Proposition \ref{p:lower1} mainly concerns with lower
bounds of $\phi_1$ for processes with infinite range jumps. We next consider
another type of lower bounds of $\phi_1$, which is suitable for processes with finite range
jumps or with sup-exponentially  decaying jumps, e.g.,\ the jumping kernel given by \eqref{e:jgeneral} with $\gamma\in(1,\infty]$.

\begin{definition}\label{d:c-reason} {\it
For fixed $y\in D$, we call $x\in D$ is connected
with $y$ in a reasonable way with respect to constants $0<a_1\le a_2<1$, if there exist $n:=n(y,x) \in \N$ and $\{x^{(i)}\}_{i=0}^n$
such that $x^{(0)}=y$, $x^{(n)}=x$, $x^{(i)}\in D$ and $a_1\le
|x^{(i)}-x^{(i-1)}|\le a_2 $ for $1\le i\le n$.
For simplicity, we write $x\sim_{(n; a_1, a_2)} y$ if $x$ is connected with $y$ in a reasonable way with respect to constants $0<a_1\le a_2<1$.}
\end{definition}

\begin{proposition}\label{p:lower2}
Suppose that $\ext_{\Psi,\ge}$ holds on $D$ and that
\begin{equation}\label{e:Jlow}\inf_{0<|x-y|<1}J(x,y)>0.\end{equation}
For any fixed $x_0\in D$,
there exist constants $c_1, c_2>0$ $($which may depend on $x_0, a_1,a_2, \tilde r_0, C_0,C_1$ but are independent of $n$$)$  such that for all $x\in D$ with $x\sim_{(n; a_1, a_2)}  x_0$ it holds that
\begin{equation}\label{e:lower2-1}\begin{split} \phi_1(x)\ge&c_1\Psi( \delta_D(x)\wedge r_0) \exp\left[-c_2\Big(n\log n +\sum_{i=0}^{n-1} \log \frac{1}{r_{i}} \Big)\right],\end{split}\end{equation}
 where $r_i
:=(\delta_D(x^{(i)})/3)\wedge r_0$ for all $1\le i\le n$, $r_0:=(\delta_D(x_0)/3)\wedge  (( (1-a_2)\wedge a_1)/4)\wedge \tilde r_0$,
and $\tilde r_0$ is the constant in \eqref{e:exit}.
\end{proposition}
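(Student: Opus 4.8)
The plan is to propagate a lower bound for $\phi_1$ from a fixed neighbourhood of $x_0$ out to $x$ along the chain $x^{(0)}=x_0,\dots,x^{(n)}=x$, by forcing $X^D$ to perform $n$ consecutive short jumps (at step $m$ landing inside the ball $B_{m-1}:=B(x^{(m-1)},r_{m-1})$) and then to stay near $x_0$; a single application of Lemma~\ref{L:lower1} then turns the resulting lower bound for $P^D_{t_0}\I_{D_0}(x)$ into the desired bound for $\phi_1(x)$. Concretely I would fix $D_0:=B(x_0,2r_0)$ and $t_0:=C_0\Psi(r_0)$, both independent of $n$; since $r_0\le\delta_D(x_0)/3$ we have $\overline{D_0}\subset D$, so $\inf_{D_0}\phi_1>0$. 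Put $B_i:=B(x^{(i)},r_i)$ for $0\le i\le n$, and for $1\le m\le n$ assign the per-step time budget
\[
\tau_m:=\Bigl(\tfrac{t_0}{2n}\Bigr)\wedge C_0\Psi(r_m),\qquad\text{so that}\qquad \sum_{m=1}^n\tau_m\le\tfrac{t_0}{2}.
\]
The constraints built into $r_0$ and $r_i$ (namely $r_0\le((1-a_2)\wedge a_1)/4$, $r_i\le r_0$, and $\delta_D(x^{(m)})\ge 3r_m$ by definition of $r_m$) give the geometric facts needed uniformly along the chain: for every $y\in B_m$ one has $B(y,r_m)\subset D$, and for all $y\in B_m$, $w\in B(y,r_m)$, $z\in B_{m-1}$,
\[
\tfrac{a_1}{4}<|w-z|<1 ,
\]
so that $B(y,r_m)$ and $B_{m-1}$ lie at positive distance and $J(w,z)\ge c_J:=\inf_{0<|u-v|<1}J(u,v)>0$.

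The core is a one-step estimate, essentially the one inside the proof of Proposition~\ref{p:lower1}. For $y\in B_m$, since a jump from $B(y,r_m)$ into the positively separated set $B_{m-1}$ can occur only at the exit time, the L\'evy system \eqref{e:levy} applied with the stopping time $\tau_{B(y,r_m)}$ yields
\begin{align*}
\Pp^y\bigl(\tau_{B(y,r_m)}\le\tau_m,\ X_{\tau_{B(y,r_m)}}\in B_{m-1}\bigr)
&=\Ee^y\!\int_0^{\tau_{B(y,r_m)}\wedge\tau_m}\!\!\int_{B_{m-1}}\!J(X_s,z)\,dz\,ds\\
&\ge c_J\,|B_{m-1}|\,\tau_m\,\Pp^y\bigl(\tau_{B(y,r_m)}>\tau_m\bigr).
\end{align*}
Since $\tau_m\le C_0\Psi(r_m)$ and $r_m\le\tilde r_0$, $\ext_{\Psi,\ge}$ gives $\Pp^y(\tau_{B(y,r_m)}>\tau_m)\ge C_1$, so the left-hand side is at least $p_m:=C_1c_J|B_{m-1}|\tau_m$, uniformly in $y\in B_m$. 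In the same way $\Pp^y(\tau_{D_0}>t_0)\ge\Pp^y(\tau_{B(y,r_0)}>t_0)\ge C_1$ for all $y\in B_0$, because $B(y,r_0)\subset D_0$ and $t_0=C_0\Psi(r_0)$.

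I would then chain these estimates by the strong Markov property: starting from $x=x^{(n)}$, require in turn that $X$ exits $B_m$ into $B_{m-1}$ within time $\tau_m$ for $m=n,n-1,\dots,1$, and finally that $X$ stays in $D_0$ up to time $t_0$. Since $\sum_m\tau_m\le t_0/2$, on this event the chain is completed before $t_0$ and the process never leaves $D$, so the event is contained in $\{X^D_{t_0}\in D_0\}$ and
\[
P^D_{t_0}\I_{D_0}(x)\ \ge\ C_1\prod_{m=1}^n p_m\ =\ C_1(C_1c_J)^n\Bigl(\prod_{m=0}^{n-1}|B_m|\Bigr)\Bigl(\prod_{m=1}^n\tau_m\Bigr).
\]
Now $|B_m|$ equals a dimensional constant times $r_m^d$, $\tau_m\ge C_0\Psi(r_m)/(2n)$ (using $\Psi(r_m)\le\Psi(r_0)$), $\Psi(r_m)\ge c\,r_m^{\beta_2}$ for $r_m\le r_0$ by \eqref{e:Psi}, and $\Psi(r_n)\ge c\,\Psi(\delta_D(x)\wedge r_0)$ because $r_n\ge\frac13(\delta_D(x)\wedge r_0)$. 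Substituting, the factor $\bigl(C_0/(2n)\bigr)^n=e^{-n\log n+O(n)}$ appears, and a routine estimate of the product (absorbing all $c^n$ and the linear terms into $n\log n$ via $n\le 1+n\log n$) gives, with $c_1,c_2$ independent of $n$,
\[
P^D_{t_0}\I_{D_0}(x)\ \ge\ c_1\,\Psi(\delta_D(x)\wedge r_0)\,\exp\!\Bigl[-c_2\Bigl(n\log n+\sum_{m=0}^{n-1}\log\tfrac1{r_m}\Bigr)\Bigr].
\]
Applying Lemma~\ref{L:lower1} with the fixed relatively compact set $D_0$ and the fixed time $t_0$ then yields \eqref{e:lower2-1}.

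The only genuine difficulty is the time budget: one must squeeze all $n$ jumps into the fixed time $t_0$ — it is precisely the fixedness of $t_0$ and $D_0$ that makes even the prefactor of $\phi_1$, not merely the exponential rate, independent of $n$ — while keeping each single-step exit probability bounded below by $C_1$. Since $\ext_{\Psi,\ge}$ controls $\tau_{B(y,r_m)}$ only on the scale $\Psi(r_m)$, one is forced to cap the budgets at $\tau_m=(t_0/(2n))\wedge C_0\Psi(r_m)$, and the $(t_0/n)^n$ factor coming from $\prod_m\tau_m$ is exactly what produces the $n\log n$ term in \eqref{e:lower2-1}; the remaining geometric verifications along the chain are routine and are exactly what the precise definitions of $r_0$ and $r_i$ are set up to make work.
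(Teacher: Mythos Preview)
Your proposal is correct and follows essentially the same route as the paper: reduce to a lower bound on $P^D_{t_0}\I_{D_0}(x)$ via Lemma~\ref{L:lower1} with the fixed $D_0=B(x_0,2r_0)$ and $t_0=C_0\Psi(r_0)$, then force the process along the chain $x^{(n)},\dots,x^{(0)}$ by $n$ successive applications of the L\'evy system combined with $\ext_{\Psi,\ge}$, and collect the product into the stated exponential bound. The only cosmetic differences are that the paper waits in the fixed balls $D_i=B(x^{(i)},2r_i)$ (landing in $\tilde D_{i-1}=B(x^{(i-1)},r_{i-1})$) with per-step time budget $C_0\Psi(r_i)/(2n)$, whereas you wait in balls $B(y,r_m)$ centered at the current position with budget $(t_0/2n)\wedge C_0\Psi(r_m)$; both choices yield the same one-step bound $c\,\Psi(r_m)\,r_{m-1}^d/n$ and hence the same final estimate.
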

\begin{proof}
In the following, we fix $x_0,x\in D$ such that $x\sim_{(n; a_1, a_2)} x_0$.
 For $0\le i\le n$, set
$D_i=B(x^{(i)}, 2r_i)$ and $\tilde D_i=B(x^{(i)},r_i).$ Then, for any $z\in D_{i}$ and $u\in \tilde D_{i-1}$ with $1\le i\le n$,
$$|z-u|\le |x^{(i)}-x^{(i-1)}|+|x^{(i)}-z|+|x^{(i-1)}-u|\le a_2+\frac{(1-a_2)}{2}+\frac{(1-a_2)}{4}< 1,$$ and $$|z-u|\ge  |x^{(i)}-x^{(i-1)}|-|x^{(i)}-z|-|x^{(i-1)}-u|> a_1-\frac{a_1}{2}-\frac{a_1}{4}=\frac{a_1}{4}>0.$$
In particular, $ D_{i} \cap \tilde D_{i-1}=\emptyset$ for $1\le i\le n$.

 Define
$\tilde{\tau}_{D_i}=\inf\{t\ge \tilde{\tau}_{D_{i+1}}: X_t\notin D_i\}$ for $1\le i\le n-1$, and
$\tilde{\tau}_{D_n}={\tau}_{D_n}.$
By the convention, we also set $\tilde{\tau}_{D_{n+1}}=0$ (and so $X_{\tilde{\tau}_{D_{n+1}}}=x$ under $\Pp^x$). Let
$t_0= C_0\Psi(r_0)$,
where $C_0>0$ is the constant in \eqref{e:exit}. Then,
\begin{equation}\label{e:piuf}
\begin{split}
&P^D_{t_0}\I_{D_0}(x)
=\Ee^x\Big(\I_{D_0}(X_{t_0})\I_{\{\tau_D>t_0\}}\Big)\\
& \ge
\Pp^x\Big(0<\tilde{\tau}_{D_{i}}-\tilde{\tau}_{D_{i+1}}\le \frac{C_0\Psi(r_i)}{2n}, X_{\tilde{\tau}_{D_i}}\in \tilde D_{i-1}\
{\rm for\ each}\ 1\le i \le n;\\
&\qquad\quad  X_{s} \in D_0 \,  \text{ for all }{s\in[\tilde{\tau}_{D_1},t_0] }\Big)\\
& = \Pp^x\Big(0<{\tau}_{D_{n}}\le \frac{C_0\Psi(r_n)}{2n}, X_{{\tau}_{D_{n}}}\in
\tilde D_{n-1}\\
&\quad\quad\times\Pp^{X_{\tilde{\tau}_{D_{n}}}}\Big(0<\tau_{D_{n-1}}\le \frac{C_0\Psi(r_{n-1})}{2n},X_{{\tau}_{D_{n-1}}}\in
\tilde D_{n-2}\\
&\quad\quad\,\times \Pp^{X_{\tilde{\tau}_{D_{n-1}}}}\Big(\cdots
\Pp^{X_{\tilde{\tau}_{D_{2}}}}\Big(0<{\tau}_{D_{1}}\le \frac{C_0\Psi(r_1)}{2n},X_{\tau_{D_{1}}}\in
\tilde D_0\\
&\quad\quad\,\,\times
\Pp^{X_{\tilde{\tau}_{D_{1}}}}\Big( X_{s}
\in D_0 \text{ for all } s\in[0,t_0-\tilde {\tau}_{D_1}] \Big)\Big)\cdots\Big)\Big)\Big),
\end{split}
\end{equation}
where in the last equality we have used the strong Markov property.

According to the L\'{e}vy system in \eqref{e:levy}, for any
$1\le i\le n$, if $X_{\tilde{\tau}_{D_{i+1}}} \in \tilde D_{i}$, then,
 \begin{equation}\label{e:piuf1}\begin{split}
&
\Pp^{X_{\tilde{\tau}_{D_{i+1}}}}\Big(0<{\tau}_{D_{i}}\le \frac{C_0\Psi(r_{i})}{2n},X_{{\tau}_{D_{i}}}\in
\tilde D_{i-1}\Big)\\
&\ge \inf_{y\in \tilde D_{i}} \Pp^y\Big(0<{\tau}_{D_{i}}\le \frac{C_0\Psi(r_{i})}{2n},X_{{\tau}_{D_{i}}}\in
\tilde D_{i-1}\Big)\\
&= \inf_{y\in \tilde D_{i}}
\Ee^y \int_0^{\frac{C_0\Psi(r_{i})}{2n}}\int_{\tilde D_{i-1}}J(X^{D_{i}}_s,u)\,du\,ds\\
&\ge \left( \inf_{z\in D_{i}, u\in \tilde D_{i-1}} J(z,u)\right) |\tilde D_{i-1}| \inf_{y\in \tilde D_{i}} \int_0^{\frac{C_0\Psi(r_{i})}{2n}}\Pp^y(\tau_{D_{i}}>s)\,ds\\
&\ge  \left( \inf_{z\in D_{i}, u\in \tilde D_{i-1}} J(z,u)\right) |\tilde D_{i-1}| {\frac{C_0\Psi(r_{i})}{2n}} \inf_{y\in \tilde D_{i}}\Pp^y\Big(\tau_{D_{i}}> \frac{{C_0\Psi(r_{i})}}{2}\Big)\\
&\ge  \left( \inf_{z\in D_{i}, u\in \tilde D_{i-1}} J(z,u)\right) |\tilde D_{i-1}| {\frac{C_0\Psi(r_{i})}{2n}}  \inf_{y\in \tilde D_{i}}\Pp^y\Big(\tau_{B(y, r_{i})}> C_0\Psi(r_{i})\Big)\\
&\ge c_1  {\frac{\Psi(r_{i})}{n}} r_{i-1}^d,
 \end{split}\end{equation}
where in the equality above we used the fact that $ D_{i} \cap \tilde D_{i-1}=\emptyset$, and in the last
inequality we used $\ext_{\Psi,\ge}$ and \eqref{e:Jlow}.

On the other hand, due to $\ext_{\Psi,\ge}$ again, if
$X_{\tilde{\tau}_{D_1}}\in \tilde D_0$, then
\begin{equation}\label{e:piuf3}
\begin{split}
\Pp^{X_{\tilde{\tau}_{D_{1}}}}\Big(\forall_{s\in  [0,t_0-\tilde {\tau}_{D_1}]} X_{s}
\in D_0\Big)&\ge\inf_{y \in \tilde
D_0}\Pp^y\Big (\tau_{D_0}>t_0\Big)\\
&\ge \inf_{y \in \tilde
D_0}\Pp^y\Big
(\tau_{B(y,r_0)}>t_0\Big)
\ge C_1>0.
\end{split}
\end{equation}

Therefore,  combining  \eqref{e:piuf} with \eqref{e:piuf1} and \eqref{e:piuf3}
and using \eqref{e:Psi},
we find that
\begin{equation*}\label{e:piuf4}\begin{split}
P^D_{t_0}\I_{D_0}(x)&\ge
  (1-C_1)
  c_1^n \prod_{i=1}^{n} \left({\frac{\Psi(r_{i})}{n}} r_{i-1}^d\right)\\
  & \ge c_2 \frac{\Psi( \delta_D(x)\wedge r_0 )}{n^n}
  c_1^n \prod_{i=1}^{n-1} {\Psi(r_{i})} r_{i}^d
  \\
 & = c_2 \Psi( \delta_D(x)\wedge r_0 )\\
   & \qquad \times  \exp \left[-\left(n (\log n-\log c_1)  +\sum_{i=1}^{n-1}
  \log \frac{1}{\Psi(r_{i})}
+ d\sum_{i=1}^{n-1}\log \frac{1}{r_{i}} \right)
 \right] \\
  &  \ge c_3 \Psi( \delta_D(x)\wedge r_0 )   \exp \left[-c_4\left(n \log n  +\sum_{i=0}^{n-1}
 \log \frac{1}{r_{i}} \right)
 \right],\end{split}
\end{equation*}
where in the last inequality we have used  the property that
$$\log \frac{1}{\Psi(r_i)}\le \beta_2 \log \frac{1}{r_i}+c_5,$$
due to \eqref{e:Psi}.
This along with Lemma \ref{L:lower1}
 proves the desired assertion.
\end{proof}

\section{Intrinsic ultracontractivity of Dirichlet semigroups: general results}\label{sec-4}
The main purpose of this section is to present sufficient conditions and necessary conditions for intrinsic ultracontractivity of $(P_t^D)_{t\ge0}$.

\subsection{Sufficient conditions for intrinsic ultracontractivity of $(P_t^D)_{t\ge 0}$}
In this part, we consider the Dirichlet form $(\E,\F)$ given by \eqref{non-local} such that
$\J_{\Phi,\ge}$ is satisfied. Recall that, without loss of generality, we have assumed that $r_0=1$ in \eqref{e:JL}, i.e.,\
$$J(x,y) \ge \frac{C_0}{|x-y|^{d} \Phi(|x-y|)},\quad 0<|x-y| \le 1.$$
Let $D\subset \R^d$ be an open set such that \eqref{e:com-3} holds; that is,
$$\lim_{ x\in D\textrm{ and }|x|\to\infty}V_D(x)=\infty.$$
Then, according to Example \ref{ex:com1} and Corollary \ref{c:com}, we know that $(P_t^D)_{t\ge0}$ is compact.
As stated before, we always assume that
the Dirichlet heat kernel $p^D(t,x,y)$ exists, and $p^D(t,\cdot,\cdot)$ is bounded, continuous and strictly positive on $D\times D$ for every $t>0$. Let $\phi_1$ be the associated ground state, which can be chosen to be bounded, continuous and strictly positive, see e.g.\ Proposition \ref{p5-1-0} below.

Recall that the Dirichlet form $(\E^D, \F^D)$ is given in \eqref{e-d}.
Our approach to prove the intrinsic ultracontractivity  of $(P_t^D)_{t\ge0}$ is based on the intrinsic super Poincar\'{e} inequality for $(\E^D, \F^D)$.
For any open set $D$ and constants $R,r>0$, let $$D_{R, r}=\big\{x\in D: |x|<R\textrm{ and }
\delta_D(x)>r\big\}.$$
We first present the following form of  local intrinsic super
Poincar\'{e} inequality for $(\E^D, \F^D)$.

\begin{proposition}\label{p:local}
Suppose that $\J_{\Phi,\ge}$ holds  and $D$ satisfies \eqref{e:com-3}.
Then
there exists a constant $c>0$
such that
for any $R,r,s>0$,
and $f\in C_c^\infty(D)$,
\begin{equation}\label{e:local} \int_{D_{R,r}}f^2(x)\,dx\le s\E^D(f,f)+\alpha(R,r,s;\phi_1) \left(\int_D|f(x)|\phi_1(x)\,dx\right)^2,  \end{equation}
where
$$\alpha(R,r,s;\phi_1):= \frac{c}{\inf_{x\in D_{R+1, r/2} }\phi_1^2(x)} \big(\Phi^{-1}(s)\wedge r\wedge 1\big)^{-d}.$$
\end{proposition}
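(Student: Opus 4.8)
The plan is to establish \eqref{e:local} by reducing to a purely local statement on the precompact open set $D_{R+1,r/2}$ and then invoking a Nash/super-Poincar\'e-type inequality for the truncated Dirichlet form. First I would introduce the truncated jumping kernel $J_1(x,y):=J(x,y)\I_{\{|x-y|\le 1\}}$ and the associated quadratic form $\E^{(1)}(f,f):=\iint_{x\ne y}(f(x)-f(y))^2 J_1(x,y)\,dx\,dy$; since $\E^{(1)}(f,f)\le \E(f,f)=\E^D(f,f)$ on $C_c^\infty(D)$, it suffices to prove \eqref{e:local} with $\E^{(1)}$ in place of $\E^D$. Using $\J_{\Phi,\ge}$ with $r_0=1$, one has $J_1(x,y)\ge C_0|x-y|^{-d}\Phi(|x-y|)^{-1}\I_{\{|x-y|\le 1\}}$, so $\E^{(1)}$ dominates (a constant times) the Dirichlet form of an $\alpha$-stable-like type process, and in particular $\E^{(1)}(f,f)\ge c\,\E^{(1/2,\mathrm{local})}$-type expressions that control $L^2\to L^1$ smoothing at scale $\Phi^{-1}(s)\wedge 1$.

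The core estimate I would extract is a local super-Poincar\'e inequality of the form
\begin{equation*}
\int_{\R^d} g^2(x)\,dx \le s\,\E^{(1)}(g,g) + c\big(\Phi^{-1}(s)\wedge 1\big)^{-d}\Big(\int_{\R^d}|g(x)|\,dx\Big)^2,\quad s>0,\ g\in C_c^\infty(\R^d),
\end{equation*}
which follows from the on-diagonal heat kernel bound $\Theta(t)=c_0(\Phi^{-1}(t)^{-d}\vee t^{-d/2})$ of Example~\ref{ex:com1} together with the standard equivalence (in the spirit of \cite{Wang00}, \cite{WBook}) between $\dhk_{\Theta,\le}$ and a super-Poincar\'e inequality with rate $\beta_0(s)\asymp\Theta(s)$; for $s\le 1$ the term $\Phi^{-1}(s)^{-d}$ dominates, which gives the stated rate. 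Then, given $f\in C_c^\infty(D)$, I would apply this to $g=f$ but restrict attention to $D_{R,r}$: on that set one additionally has the crude truncation $\big(\Phi^{-1}(s)\wedge 1\big)^{-d}\le\big(\Phi^{-1}(s)\wedge r\wedge 1\big)^{-d}$ coming from comparing scales, which is why the factor $r$ enters. Finally I convert the $L^1$ norm $\int_D|f|\,dx$ into the weighted $L^1$ norm $\int_D|f|\phi_1\,dx$: writing $\int_D|f|\,dx=\int_D|f|\phi_1\cdot\phi_1^{-1}\,dx$ is not directly useful since $\phi_1$ decays, so instead I would localise — only the mass of $f$ over $D_{R+1,r/2}$ matters for controlling $\int_{D_{R,r}}f^2$, because points of $D_{R,r}$ are at distance $>r/2$ from $(D_{R+1,r/2})^c$ and the relevant smoothing kernel has range $\le\Phi^{-1}(s)\wedge r$. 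Bounding $\int_{D_{R+1,r/2}}|f|\,dx\le \big(\inf_{D_{R+1,r/2}}\phi_1\big)^{-1}\int_D|f|\phi_1\,dx$ then produces the constant $\alpha(R,r,s;\phi_1)$ with the claimed dependence on $\inf_{D_{R+1,r/2}}\phi_1^2$.

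The main obstacle is making the localisation step rigorous: one must show that the portion of $f$ living outside $D_{R+1,r/2}$ does not contribute to $\int_{D_{R,r}}f^2$ at the relevant scale, which requires a careful splitting of $f$ (e.g. via a cutoff $\psi$ equal to $1$ on $D_{R,r}$ and supported in $D_{R+1,r/2}$, with $|\nabla\psi|\lesssim 1/r$) and an estimate of $\E^{(1)}(\psi f,\psi f)$ in terms of $\E^{(1)}(f,f)$ — for a nonlocal form this cutoff estimate costs an extra term $\iint (f(y))^2(\psi(x)-\psi(y))^2 J_1(x,y)\,dx\,dy$, which is controlled using \eqref{e:dfc} and the Lipschitz bound on $\psi$, contributing a harmless $r^{-2}\le(\Phi^{-1}(s)\wedge r\wedge 1)^{-d}$-type term after adjusting constants (here one uses $\la>0$ so that $\Phi^{-1}(s)\le c s^{1/\ua}$ and the scales compare correctly). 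Once this cutoff bookkeeping is done, assembling the pieces is routine.
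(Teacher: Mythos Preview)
Your intuition is right --- the heart of the matter is that a Nash-type smoothing at scale $\sigma\le (r/2)\wedge 1$ applied on $D_{R,r}$ only sees values of $f$ inside $D_{R+1,r/2}$ --- but the cutoff implementation you propose does not realise this and leaves a genuine gap.

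The problem is the commutator term. With $\psi=1$ on $D_{R,r}$, $\mathrm{supp}\,\psi\subset D_{R+1,r/2}$ and $|\nabla\psi|\lesssim (r\wedge1)^{-1}$, the bound
\[
\E^{(1)}(\psi f,\psi f)\le 2\E^{(1)}(f,f)+2\iint f(y)^2(\psi(x)-\psi(y))^2 J_1(x,y)\,dx\,dy
\]
is correct, and the last double integral is indeed $\lesssim (r\wedge 1)^{-2}\int f(y)^2\,dy$ via \eqref{e:dfc}. But the $y$-integral here runs over all $y$ lying within distance $1$ of $\mathrm{supp}\,\psi$ (since $J_1$ has range $1$, which in general greatly exceeds $r/2$), not merely over $D_{R+1,r/2}$. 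After multiplying by $s$ you obtain an error $\frac{Cs}{(r\wedge1)^2}\int_E f^2$ with $E\supsetneq D_{R+1,r/2}$, and this cannot be absorbed into the left side $\int_{D_{R,r}}f^2$ nor converted to $\bigl(\int_D|f|\phi_1\bigr)^2$ without already knowing an inequality of the type you are proving on the larger set $E$. Your remark that the contribution is ``harmless because $r^{-2}\le(\Phi^{-1}(s)\wedge r\wedge1)^{-d}$'' conflates the size of a constant with the nature of the term it multiplies: the factor $(r\wedge1)^{-2}$ sits in front of $\|f\|_{L^2(E)}^2$, not $\|f\|_{L^1(D_{R+1,r/2})}^2$.

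The paper bypasses this by implementing your locality observation \emph{directly}, without any cutoff or appeal to the abstract super-Poincar\'e. For $x\in D_{R,r}$ and $0<\sigma\le(r/2)\wedge1$ set $f_\sigma(x)=|B(x,\sigma)|^{-1}\int_{B(x,\sigma)}f$; since $B(x,\sigma)\subset D_{R+\sigma,r-\sigma}\subset D_{R+1,r/2}$, this average depends only on $f|_{D_{R+1,r/2}}$. Splitting $\int_{D_{R,r}}f^2\le 2\int_{D_{R,r}}(f-f_\sigma)^2+2\int_{D_{R,r}}f_\sigma^2$, the first term is bounded by $c\,\Phi(\sigma)\,\E^D(f,f)$ using $\J_{\Phi,\ge}$ (this is the Nash step, done by hand at scale $\sigma$), and the second by $c\,\sigma^{-d}\bigl(\int_{D_{R+1,r/2}}|f|\bigr)^2$ via the crude estimate $\|f_\sigma\|_{L^2(D_{R,r})}^2\le\|f_\sigma\|_{L^\infty(D_{R,r})}\|f_\sigma\|_{L^1(D_{R,r})}$. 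The $L^1$ norm is now already localised, with no commutator cost; dividing by $\inf_{D_{R+1,r/2}}\phi_1$ and substituting $s\sim\Phi(\sigma)$ gives exactly $\alpha(R,r,s;\phi_1)$.
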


\begin{proof}
For  $f\in C_c^\infty(D)$, define $$f_s(x)=\frac{1}{|B(x,s)|}\int_{B(x,s)} f(z)\,dz, \quad s>0, x \in D.$$
Note that for any $x\in D_{R,r}$ and $0<s<r/2$, by the definition of $D_{R,r}$, $B(x,s)\subset D_{R+s, r-s}$.   Thus,  we obtain
$$\sup_{x\in D_{R,r}}|f_s(x)|\le \frac{1}{|B(0,s)|}\int_{D_{R+s,r-s}}|f(z)|\,dz$$ and
\begin{align*}\int_{D_{R,r}}|f_s(x)|\,dx&\le \int_{D_{R,r}} \frac{1}{|B(0,s)|}\int_{B(x,s)} |f(z)|\,dz\,dx\\
&\le \int_{D_{R+s,r-s}}\left(\frac{1}{|B(0,s)|}\int_{B(z,s)}\,dx\right)|f(z)|\,dz\\
&\le \int_{D_{R+s,r-s}} |f(z)|\,dz.\end{align*}
Hence, \begin{align*}\int_{D_{R,r}}f_s^2(x)\,dx\le &\left(\sup_{x\in D_{R,r}}|f_s(x)|\right)\int_{D_{R,r}}|f_s(z)|\,dz
= \frac{1}{|B(0,s)|}\left(\int_{D_{R+s,r-s}} |f(z)|\,dz\right)^2.\end{align*} Therefore, we have for any $f\in C_c^\infty(D)$ and $0<s\le (r/2)\wedge1$,
\begin{align*}\int_{D_{R,r}}f^2(x)\,dx&\le 2\int_{D_{R,r}}(f(x)-f_s(x))^2\,dx+2\int_{D_{R,r}}f^2_s(x)\,dx\\
&\le 2 \int_{D_{R,r}} \frac{1}{|B(0,s)|}\int_{B(x,s)}(f(x)-f(y))^2\,dy\,dx\\
&\quad+ \frac{2}{|B(0,s)|}\left(\int_{D_{R+s,r-s}}|f(z)|\,dz\right)^2\\
&\le c_1\Phi(s)\iint_{\{(x,y)\in D\times D: |x-y|\le s\}}(f(x)-f(y))^2 J(x,y)\,dx\,dy\\
&\quad+ c_2 s^{-d}\left(\int_{D_{R+s,r-s}}|f(z)|\,dz\right)^2\\
&\le c_1\Phi(s)\E^D(f,f)+\frac{ c_2s^{-d}}{\inf_{x\in D_{R+1, r/2} }\phi^2_1(x)} \left(\int_{D_{R+1,r/2}}|f(z)|\phi_1(z)\,dz\right)^2,\end{align*}
where in the third inequality we have used \eqref{e:JL} and the fact that for any $x\in D_{R,r}$ and $0<s\le r/2$, $B(x,s)\subset D$.

Setting $c_1\Phi(s)$ as $s$ in the inequality above and using \eqref{e:Phi}, we arrive
at that there exist constants $c_3, c_4>0$ such that for any $0<s \le c_3(\Phi(r) \wedge1)$ and $f\in
C_c^\infty(D)$,
\begin{align}
\label{e:localP1}
\int_{D_{R,r}}f^2(x)\,dx\le s\E^D(f,f)+\frac{ c_4(\Phi^{-1}(s))^{-d}}{\inf_{x\in D_{R+1, r/2} }\phi_1^2(x)} \left(\int_D|f(x)|\phi_1(x)\,dx\right)^2.
\end{align}
When $c_3(\Phi(r) \wedge1)<s$, we apply \eqref{e:localP1} with $s=c_3(\Phi(r) \wedge1)$ and get
\begin{align*}
\int_{D_{R,r}}f^2(x)\,dx
&\le c_3(\Phi(r) \wedge1)\E^D(f,f)\\
&\quad +\frac{ c_4(\Phi^{-1}(c_3(\Phi(r) \wedge1)))^{-d}}{\inf_{x\in D_{R+1, r/2} }\phi_1^2(x)} \left(\int_D|f(x)|\phi_1(x)\,dx\right)^2\\
&\le s\E^D(f,f)+\frac{ c_5 (\Phi^{-1}(s) \wedge r\wedge 1)^{-d}}{\inf_{x\in D_{R+1, r/2} }\phi_1^2(x)} \left(\int_D|f(x)|\phi_1(x)\,dx\right)^2,
\end{align*} where in the last inequality we used \eqref{e:Phi} again.
Therefore, the desired assertion follows from both conclusions above.
\end{proof}

The following theorem is the main result in this subsection.

\begin{theorem}\label{T:IU}
Suppose that $\J_{\Phi,\ge}$ holds  and $D$ satisfies \eqref{e:com-3}.
Then, for any $f\in C_c^\infty(D)$ and $s>0$,
\begin{equation}\label{e:whole}\int_Df^2(x)\,dx\le s\E^D(f,f)+ \beta(s)\left(\int_D|f(x)|\phi_1(x)\,dx\right)^2,  \end{equation} where
\begin{equation}\label{beta}
\beta(s):=\inf\left\{ \alpha(R,r,s/2;\phi_1): \, 0<r <1/2\textrm{ and  }
\inf_{x\in
D\setminus D_{R,r}}V_D(x)\ge 1/s
 \right\},
\end{equation}
and $\alpha(R,r,s;\phi_1)$ is defined in Proposition $\ref{p:local}$.
By convention, $\inf \emptyset =\infty$ here.

Consequently, the semigroup $(P^D_t)_{t\ge 0}$ is intrinsically
ultracontractive, if
\begin{equation}\label{e:iurate}
\int_t^{\infty}\frac{\beta^{-1}(s)}{s}\,ds<\infty,\quad t>\inf
\beta.
\end{equation}
\end{theorem}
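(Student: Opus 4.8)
The plan is to combine the local intrinsic super Poincar\'e inequality of Proposition~\ref{p:local} with the lower bound on $V_D$ outside a bounded ``good'' region $D_{R,r}$, and then feed the resulting global intrinsic super Poincar\'e inequality \eqref{e:whole} into the standard machinery relating super Poincar\'e inequalities to ultracontractivity of the ground-state-transformed semigroup. First I would split an arbitrary $f\in C_c^\infty(D)$ as $\int_D f^2 = \int_{D_{R,r}} f^2 + \int_{D\setminus D_{R,r}} f^2$. On the good region $D_{R,r}$ I apply Proposition~\ref{p:local} with $s$ replaced by $s/2$, which contributes $\tfrac{s}{2}\E^D(f,f) + \alpha(R,r,s/2;\phi_1)\bigl(\int_D|f|\phi_1\bigr)^2$. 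On the complement $D\setminus D_{R,r}$ I use the potential term in $\E^D$: recalling from \eqref{e-d} that $\E^D(f,f)\ge 2\int_D f^2 V_D\,dx$, and that on $D\setminus D_{R,r}$ we have $V_D(x)\ge 1/s$ by the defining condition in \eqref{beta}, I get $\int_{D\setminus D_{R,r}} f^2\,dx \le s\int_{D\setminus D_{R,r}} f^2 V_D\,dx \le \tfrac{s}{2}\E^D(f,f)$. Adding the two pieces gives \eqref{e:whole} with the infimum over all admissible pairs $(R,r)$, i.e.\ with $\beta(s)$ as defined in \eqref{beta}; here one uses that $\lim_{|x|\to\infty, x\in D}V_D(x)=\infty$ (condition \eqref{e:com-3}) to guarantee that for each $s$ the set of admissible $(R,r)$ is nonempty, so $\beta(s)<\infty$ whenever $s>\inf\beta$.

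For the second, ``consequently'' part, I would invoke the intrinsic ground-state transform: set $h=\phi_1$, consider the $h$-transformed semigroup $\widetilde P_t^D g = e^{\lambda_1 t}\phi_1^{-1} P_t^D(\phi_1 g)$ acting on $L^2(D;\phi_1^2\,dx)$, whose Dirichlet form $\widetilde\E$ satisfies $\widetilde\E(g,g)=\E^D(\phi_1 g,\phi_1 g)$ for $g$ with $\phi_1 g\in\F^D$. Rewriting \eqref{e:whole} for $f=\phi_1 g$ yields the super Poincar\'e inequality
\begin{equation*}
\int_D g^2\,\phi_1^2\,dx \le s\,\widetilde\E(g,g) + \beta(s)\Bigl(\int_D |g|\,\phi_1^2\,dx\Bigr)^2,\qquad s>0,
\end{equation*}
which is exactly a super Poincar\'e inequality for the symmetric Markov semigroup $\widetilde P_t^D$ on the probability-type space $L^2(D;\phi_1^2\,dx)$ with rate function $\beta$. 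By the general equivalence between super Poincar\'e inequalities and ultracontractivity (e.g.\ \cite[Theorem~3.3.15]{WBook} or \cite[Theorem~4.5]{Wang00}, as already used in the proof of Theorem~\ref{T:compact}), the integrability condition \eqref{e:iurate}, namely $\int_t^\infty \beta^{-1}(s)/s\,ds<\infty$ for large $t$, implies $\|\widetilde P_t^D\|_{L^1(\phi_1^2)\to L^\infty(\phi_1^2)}<\infty$ for every $t>0$. Unwinding the transform, $\|\widetilde P_t^D\|_{L^1(\phi_1^2)\to L^\infty(\phi_1^2)}=\sup_{x,y\in D}\phi_1(x)^{-1}\phi_1(y)^{-1}e^{\lambda_1 t}p^D(t,x,y)$, so finiteness of this quantity is precisely $p^D(t,x,y)\le C_{t,D}\phi_1(x)\phi_1(y)$, i.e.\ intrinsic ultracontractivity of $(P_t^D)_{t\ge0}$.

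The main obstacle I expect is purely bookkeeping around the rate function: one must check that $\beta$ as defined by the infimum in \eqref{beta} is a genuine (finite, decreasing) rate function on $(\inf\beta,\infty)$ and that \eqref{e:iurate} is the correct integrability threshold for the cited ultracontractivity criterion, since that criterion is usually stated for a nonincreasing $\beta:(0,\infty)\to(0,\infty)$ and here $\beta$ may be $+\infty$ for small $s$. This is handled by noting that $\alpha(R,r,\cdot;\phi_1)$ is decreasing in its third argument and that enlarging $s$ only enlarges the admissible set of $(R,r)$ in \eqref{beta}, so $\beta$ is decreasing; restricting attention to $s$ with $\beta(s)<\infty$ (which is all that \eqref{e:iurate} concerns) lets one apply the criterion on $(\inf\beta,\infty)$ after, if necessary, replacing $\beta$ by $\beta\wedge(\text{something finite})$ without affecting the tail integral. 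A secondary technical point is the density/closure argument needed to pass \eqref{e:whole}, proved for $f\in C_c^\infty(D)$, to all $g$ in the form domain of $\widetilde\E$, which follows from the regularity of $(\E^D,\F^D)$ and the fact that $\phi_1$ is bounded, continuous and strictly positive on $D$.
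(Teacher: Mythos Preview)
Your proof is essentially identical to the paper's: the same decomposition $\int_D f^2 = \int_{D_{R,r}} f^2 + \int_{D\setminus D_{R,r}} f^2$, the same use of Proposition~\ref{p:local} on $D_{R,r}$ and of $\E^D(f,f)\ge 2\int_D f^2 V_D$ on the complement, and the same appeal to the super Poincar\'e/ultracontractivity machinery (the paper cites \cite[Theorem~3.3]{Wang02} and \cite[Theorem~2.1]{CW16} rather than spelling out the ground-state transform, but your unpacking of it is correct).

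One small correction: your claim that condition~\eqref{e:com-3} alone guarantees the admissible set in \eqref{beta} is nonempty for each $s$ is not right. Since $D\setminus D_{R,r}$ contains $\{x\in D:\delta_D(x)\le r\}$ as well as $\{x\in D:|x|\ge R\}$, one also needs $V_D$ to be large near the boundary, which \eqref{e:com-3} does not provide; the paper makes exactly this point in Remark~\ref{r:new}. This does not affect the validity of \eqref{e:whole} (the convention $\inf\emptyset=\infty$ handles it), nor of the ``consequently'' part (which only concerns $s$ with $\beta(s)<\infty$), but you should drop the parenthetical assertion that \eqref{e:com-3} suffices for finiteness of $\beta$.
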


\begin{proof}
Due to \eqref{e-d}, it holds that $$\int_D f^2(x)V_D(x)\,dx\le \frac{1}{2}\E^D(f,f),\quad f \in C_c^\infty(D).$$
Then, for any $R$, $r>0$,
\begin{align*}
\int_{D\setminus D_{R,r}} f^2(x)\,dx&\le \frac{1}{\inf_{x\in D\setminus D_{R,r}}V_D(x)}\int_D f^2(x)V_D(x)\,dx\\
&\le  \frac{1}{2\inf_{x\in D\setminus D_{R,r}}V_D(x)} \E^D(f,f).\end{align*}  This along with \eqref{e:local} yields that for any $t>0$,
\begin{align*}
\int_Df^2(x)\,dx
\le & \left(t+ \frac{1}{2\inf_{x\in D\setminus D_{R,r}}V_D(x)}\right)\E^D(f,f)\\
&\quad + \alpha(R,r,t;\phi_1) \left(\int_D|f(x)|\phi_1(x)\,dx\right)^2.\end{align*}
 For any $s>0$, letting $t= s/2$ and choosing $0<r<1/2$ and $R>0$ such that $ \inf_{x\in D\setminus D_{R,r}}V_D(x)\ge 1/s$,
 we obtain the first desired assertion.

The second assertion is a consequence of the first one and \cite[Theorem 3.3]{Wang02} or
\cite[Theorem 3.3.14]{WBook}.
See e.g.\ \cite[Theorem 2.1]{CW16} for the proof.
 \end{proof}

\begin{remark}\label{r:new} {
Since  $\inf_{x\in D\setminus D_{R,r}}V_D(x) \le \inf_{x\in D \text{ and } |x|\ge R} V_D(x)$ for any $r>0$, \eqref{e:com-3} does not
guarantee  the finiteness of the rate function $\beta(s)$.
 According to Proposition \ref{p3-1}, we know that if $D^c$ is $\kappa$-fat such that \eqref{e:do} is satisfied, then  the rate function $\beta(s)$ defined by \eqref{beta} is finite
} \end{remark}

\subsection{Necessary conditions for intrinsic ultracontractivity of $(P_t^D)_{t \ge 0}$}
We first introduce two definitions.
\begin{definition}\label{d:ND}
\it
 Let $\Psi$ be an increasing function on $(0, \infty)$ satisfying \eqref{e:Psi}.
  We say that the lower bound near diagonal estimate
   $\ND_{\Psi,\ge}$ holds $D$,
  if there are $c_0\in (0,1)$ and $r_0,c_1,c_2>0$ such that for all
 $0<r\le r_0$ and    $B(x,r) \subset D$,
$$ p^{B(x,r)}(c_1\Psi(r), y,z)\ge c_2r^{-d},\quad y,z\in B(x,c_0r).$$\end{definition}

\begin{definition} \it Let $\Gamma$ be a non-increasing positive function on $(1,\infty)$ such that $\lim_{s \rightarrow \infty}
\Gamma(s)=0$. We say that the off-diagonal upper bounded estimate $\OD_{\Gamma,\le}$ holds for
the Dirichlet heat kernel $p^D(t,x,y)$
if there are $t_0, c_0 > 0$ such that for all $t\in (0,t_0]$ and $x,y\in D$ with $|x-y|\ge c_0$,
$$p^D(t,x,y)\le C(t)\Gamma(|x-y|),$$
where $C(t)$ is a positive constant depending on $t$.
\end{definition}
If
$\ND_{\Psi,\ge}$ holds on $D$,
 then there exists a positive constant $c>0$ such that for all $r\in (0,r_0]$ and $B(x,r) \subset D$,
$$ \Pp^x\big(\tau_{B(x,r)}
> c_1\Psi(r)\big)\ge \int_{B(x,c_0r)}p^{B(x,r)}
\big( c_1\Psi(r),x,y\big)\,dy\ge c,$$ and so
$\ext_{\Psi,\ge}$ holds on $D$ too.
Next, we give the other consequence of $\ND_{\Psi,\ge}$.
\begin{lemma}\label{l:sur_low}
Suppose that $\ND_{\Psi,\ge}$ holds on $D$.
Then, there exist positive constants $c_1,c_2$ such that for all $t>0$ and $x\in D$ with $\delta_D(x)\le r_0$,
$$
P_{t}^D\I_{B(x,\delta_{D}(x))}(x)\ge  c_1 e^{-c_2 t/\Psi(\delta_{D}(x))},
$$
where $r_0$ the positive constant in Definition $\ref{d:ND}$.
\end{lemma}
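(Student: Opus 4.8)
The plan is to fix $x \in D$ with $\delta_D(x) \le r_0$, abbreviate $\rho = \delta_D(x)$, $B = B(x,\rho) \subset D$, and estimate $P_t^D \I_{B(x,\rho)}(x)$ from below by only tracking trajectories that stay inside $B$ up to time $t$. Since $B \subset D$, we have $p^D(t,x,y) \ge p^{B}(t,x,y)$ for $y \in B$, so it suffices to bound $P_t^{B}\I_{B}(x) = \Pp^x(\tau_{B} > t)$ from below; note that if $X_s \in B$ for all $s \le t$ then automatically $X_t \in B(x,\rho) = B(x,\delta_D(x))$, which is what the integrand requires. First I would record that $\ND_{\Psi,\ge}$ on $D$ gives, for the ball $B$, the estimate $p^{B}(c_1\Psi(\rho),y,z) \ge c_2\rho^{-d}$ for $y,z \in B(x,c_0\rho)$, hence $\Pp^y(\tau_{B} > c_1\Psi(\rho)) \ge \int_{B(x,c_0\rho)} p^{B}(c_1\Psi(\rho),y,z)\,dz \ge c_2\rho^{-d}\,|B(x,c_0\rho)| =: c_3 \in (0,1)$, uniformly over $y \in B(x,c_0\rho)$.

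Next I would iterate this in time using the Markov property: set $T = c_1\Psi(\rho)$ and, for an integer $k \ge 1$, estimate $\Pp^x(\tau_{B} > kT)$ by insisting that at each of the times $T, 2T, \ldots, (k-1)T$ the process lies in the smaller ball $B(x,c_0\rho)$ and survives in $B$ on each length-$T$ subinterval. Concretely,
\begin{equation*}
\Pp^x(\tau_{B} > kT) \ge \Ee^x\Big[ \I_{\{\tau_{B} > T,\, X_T \in B(x,c_0\rho)\}}\, \Pp^{X_T}(\tau_{B} > (k-1)T)\Big],
\end{equation*}
and by induction together with the one-step bound $\inf_{y \in B(x,c_0\rho)} \Pp^y(\tau_B > T,\, X_T \in B(x,c_0\rho)) \ge c_3$ (which follows from the reproducing estimate above, since $\{X_T \in B(x,c_0\rho)\} \supset$ the event used to produce $c_3$), one gets $\Pp^x(\tau_{B} > kT) \ge c_3^{\,k}$. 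Then for general $t > 0$ choose $k = \lceil t/T \rceil$, so $kT \ge t$ and hence $\Pp^x(\tau_{B} > t) \ge \Pp^x(\tau_{B} > kT) \ge c_3^{\,k} = e^{-k\log(1/c_3)} \ge e^{-(1 + t/T)\log(1/c_3)} = c_1' e^{-c_2' t/\Psi(\rho)}$, which is the claimed bound after renaming constants (absorbing the $\Psi(\rho)$-dependence of $T = c_1\Psi(\rho)$ into $c_2'$ via \eqref{e:Psi} if desired, though here it is just a fixed multiple).

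The only mildly delicate point — and the step I would be most careful about — is the precise form of the one-step estimate: $\ND_{\Psi,\ge}$ directly controls the density $p^{B(x,r)}$ for the \emph{ball centered at $x$ with radius $r = \rho$}, which is exactly the ball $B$ we are using, so no re-centering is needed; but to run the iteration I need the bound to be uniform over all starting points $y \in B(x,c_0\rho)$, not just $y = x$. This is fine because the hypothesis of Definition \ref{d:ND} already states $p^{B(x,r)}(c_1\Psi(r),y,z) \ge c_2 r^{-d}$ for all $y,z \in B(x,c_0r)$, so the uniformity is built in. One should also note $\rho = \delta_D(x) \le r_0$ is exactly what makes $\ND_{\Psi,\ge}$ applicable with $r = \rho$. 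With these observations in place the argument is a routine Markov-chaining estimate and the conclusion follows.
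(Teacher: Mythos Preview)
Your proposal is correct and follows essentially the same approach as the paper: both bound $P_t^D\I_{B(x,\delta_D(x))}(x)$ below by $\Pp^x(\tau_{B(x,\delta_D(x))}>t)$ and then iterate the one-step lower bound from $\ND_{\Psi,\ge}$ (restricting to the smaller ball $B(x,c_0\delta_D(x))$ at each step) via the Markov property with $k=\lceil t/(c_1\Psi(\delta_D(x)))\rceil$ steps. The paper separates the case $t\le c_1\Psi(\delta_D(x))$ and writes the iteration as an explicit Chapman--Kolmogorov integral chain, but the substance is identical to your inductive Markov-property argument.
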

\begin{proof} Throughout the proof, let $r_0$ and $c_1$ be positive constants in the definition $\ND_{\Psi,\ge}$. Fix $x\in D$ with  $\delta_D(x)\le r_0$.
As mentioned above, $\ND_{\Psi,\ge}$ implies $\ext_{\Psi,\ge}$. Hence,  if $t>0$ such that $c_1\Psi(\delta_D(x))\ge t$, then
\begin{equation*}
\begin{split}
P_{t}^D\I_{B(x,\delta_{D}(x))}(x)&\ge \Pp^x(\tau_{B(x,\delta_{D}(x))}>t)\\
&\ge  \Pp^x\big(\tau_{B(x,\delta_{D}(x))}>c_1\Psi(\delta_{D}(x))\big)\\
&\ge C_1\ge C_1 e^{- t/(c_1\Psi(\delta_{D}(x)))}.
\end{split}
\end{equation*}

Next, we consider $t>0$
such that $c_1\Psi(\delta_D(x))<t$. Set $n=\big\lceil \frac{t}{c_1\Psi(\delta_{D}(x))}\big \rceil$. Then, by $\ND_{\Psi,\ge}$,
\begin{align*}
P_{t}^D\I_{B(x,\delta_{D}(x))}(x)
&\ge \Pp^x(\tau_{B(x,\delta_{D}(x))}>t)\\
& \ge  \Pp^x(\tau_{B(x,\delta_{D}(x))}>n c_1 \Psi(\delta_{D}(x)))\\
&\ge \int_{B(x,c_0\delta_{D}(x))} \cdots \int_{B(x,c_0\delta_{D}(x))}p^{B(x,\delta_{D}(x))}(c_1 \Psi(\delta_{D}(x)),x,z_1)  \cdots\\
& \qquad \times p^{B(x,\delta_{D}(x))}(c_1 \Psi(\delta_{D}(x)),z_{n-1},z_{n})\,dz_n\cdots dz_1\\
&\ge C_2^n\ge C_3e^{- C_4t/\Psi(\delta_{D}(x))}.
\end{align*}

Therefore, combining with both estimates above, we prove the desired assertion.
\end{proof}

Now, we are in the position to present
necessary conditions for the intrinsic ultracontractivity of
$(P_t^{D})_{t \ge 0}$.

\begin{proposition}\label{p3-1a}
Suppose that $D$ is an open subset satisfying \eqref{e:do}, and that
$\ND_{\Psi,\ge}$ holds on $D$ and $\OD_{\Gamma,\le}$ hold for the Dirichlet heat kernel $p^D(t,x,y)$.
If $(P_t^{D})_{t \ge 0}$ is intrinsically
ultracontractive, then
\begin{equation}\label{p3-1-2}
\lim_{x \in D \text{ and } |x|\rightarrow \infty} \Psi(\delta_D(x)) \big| \log \Gamma (|x|)\big|=0.  \end{equation}
\end{proposition}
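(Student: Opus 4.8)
The plan is to argue by contradiction, exploiting intrinsic ultracontractivity to sandwich $\Psi(\delta_D(x))|\log\Gamma(|x|)|$ and derive a contradiction with the fact that $\phi_1 \in L^2(D;dx)$ (or, more precisely, with the fact that $\phi_1$ is bounded). Suppose \eqref{p3-1-2} fails: then there exist $\varepsilon>0$ and a sequence $x_n \in D$ with $|x_n| \to \infty$ such that $\Psi(\delta_D(x_n))|\log\Gamma(|x_n|)| \ge \varepsilon$ for all $n$. Since \eqref{e:do} forces $\delta_D(x_n) \to 0$, we have $\Psi(\delta_D(x_n)) \to 0$, hence $|\log\Gamma(|x_n|)| \to \infty$, i.e.\ $\Gamma(|x_n|) \to 0$ (consistent with $\Gamma$ vanishing at infinity). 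The idea is that, for $x$ far from the origin, the intrinsic ultracontractivity bound $p^D(t,x,y) \le C_{t,D}\phi_1(x)\phi_1(y)$ combined with a lower bound on $p^D(t,x,\cdot)$ near $x$ coming from $\ND_{\Psi,\ge}$ (via Lemma \ref{l:sur_low}) will force $\phi_1(x)$ to be \emph{large} in a sense controlled by $\exp(-ct/\Psi(\delta_D(x)))$, while the off-diagonal bound $\OD_{\Gamma,\le}$ together with the same IU bound will force $\phi_1(x)$ to be \emph{small}, controlled by a multiple of $\sqrt{\Gamma(|x|)}$; comparing the two yields a bound on $\Psi(\delta_D(x))|\log\Gamma(|x|)|$ from above, contradicting the choice of $x_n$.

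The key steps, in order, are as follows. \textbf{Step 1 (Lower bound for $\phi_1$ via $\ND_{\Psi,\ge}$).} Fix a reference point $x_0 \in D$ and a small relatively compact ball $D_0 = B(x_0,\rho) \subset D$. Using the eigenfunction equation $P_t^D\phi_1 = e^{-\lambda_1 t}\phi_1$ and the semigroup property, write for $x \in D$ and suitable $t>0$,
\[
e^{-\lambda_1 t}\phi_1(x) = P_t^D\phi_1(x) \ge \Big(\inf_{z\in D_0}\phi_1(z)\Big)\int_{D_0}p^D(t/2,x,w)\,\Pp^w\text{-type factor}\cdots
\]
more cleanly: by Chapman–Kolmogorov, $e^{-\lambda_1 t}\phi_1(x) = \int_D p^D(t/2,x,w)P_{t/2}^D\phi_1(w)\,dw \ge c(D_0)\int_{B(x,\delta_D(x))}p^D(t/2,x,w)P_{t/2}^D\I_{D_0}(w)\,dw$; but the efficient route is to combine Lemma \ref{l:sur_low} with a chaining argument connecting $B(x,\delta_D(x))$ back to $D_0$, as in Proposition \ref{p:lower1}. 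The upshot I expect is a bound of the form $\phi_1(x) \ge c_1\,\Psi(\delta_D(x))\cdot(\text{something bounded below})\cdot e^{-c_2 t_1/\Psi(\delta_D(x))}$ for a fixed $t_1>0$; the exact polynomial prefactor is harmless since it is subexponential in $1/\Psi(\delta_D(x))$. \textbf{Step 2 (Upper bound for $\phi_1$ via $\OD_{\Gamma,\le}$ and IU).} Intrinsic ultracontractivity gives $p^D(t,x_0,y) \le C_{t,D}\phi_1(x_0)\phi_1(y)$, so $\phi_1(y) \ge c\, p^D(t,x_0,y)/\phi_1(x_0)$ is \emph{not} what we want; instead, integrate: for fixed $t>0$, $e^{-\lambda_1 t}\phi_1(x) = \int_D p^D(t,x,y)\phi_1(y)\,dy$, and split the integral at $\{|y-x_0| \le c_0\}$ and its complement, using $\OD_{\Gamma,\le}$ on the far part; alternatively, and more directly, use IU in the form $\phi_1(x) \le C_{t,D}^{-1}\,p^D(t,x,y)/\phi_1(y)$ is again backwards. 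The clean statement: from IU, $\phi_1$ is bounded, say $\|\phi_1\|_\infty =: M$; then $e^{-\lambda_1 t}\phi_1(x) = \int_D p^D(t,x,y)\phi_1(y)\,dy \le M\int_{B(x_0,c_0)}p^D(t,x,y)\,dy + M\int_{D\setminus B(x_0,c_0)}p^D(t,x,y)\,dy$, and for $|x|$ large the first term is $\le M|B(x_0,c_0)|\sup_y p^D(t,x,y) \le M|B(x_0,c_0)|C(t)\Gamma(|x|-c_0)$ by $\OD_{\Gamma,\le}$ (since then $|x-y|\ge |x|-c_0$ is large), while the second term is small because... actually the cleanest is to apply $\OD_{\Gamma,\le}$ directly to $p^D(t,x_0,x) \le C(t)\Gamma(|x-x_0|) \asymp C(t)\Gamma(|x|)$ and then use IU: $C(t)\Gamma(|x|) \ge p^D(t,x_0,x) \ge c\,P_{t}^D\I_{D_0}(x_0)\cdot\phi_1(x)/\text{(normalization)}$ — the right comparison is $\phi_1(x) \ge c\, P_{t/2}^D\I_{D_0}(x)$ (Lemma \ref{L:lower1} applied at $x$, but oriented so that $P_{t/2}^D\I_{D_0}(x)$ relates to $p^D(t,x_0,x)$ via Chapman–Kolmogorov and the lower bound $p^D(t/2,x_0,w)\ge c$ for $w\in D_0$). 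Thus $\phi_1(x) \le C'(t)\Gamma(|x|)$ for $|x|$ large. \textbf{Step 3 (Combine and conclude).} Comparing the bounds from Steps 1–2 at $x = x_n$: $c_1\Psi(\delta_D(x_n))e^{-c_2 t_1/\Psi(\delta_D(x_n))} \le \phi_1(x_n) \le C'\Gamma(|x_n|)$, so taking logarithms, $-c_2 t_1/\Psi(\delta_D(x_n)) + \log(c_1\Psi(\delta_D(x_n))) \le \log C' + \log\Gamma(|x_n|) = \log C' - |\log\Gamma(|x_n|)|$, which rearranges to $\Psi(\delta_D(x_n))|\log\Gamma(|x_n|)| \le c_2 t_1 + \Psi(\delta_D(x_n))|\log(c_1\Psi(\delta_D(x_n)))| + O(\Psi(\delta_D(x_n)))$. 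Since $\Psi(\delta_D(x_n)) \to 0$ and $s|\log s| \to 0$ as $s \to 0$, the right side is bounded; but the parameter $t_1$ here is fixed, so this only gives \emph{boundedness}, not the required limit $0$. To upgrade to the limit, I would let $t_1 = t_1(n) \to 0$ slowly — more precisely, in Step 1 the constant $t_1$ can be taken as small as we like at the cost of enlarging $c_1$ only mildly (the chaining uses a fixed number of balls once $x$ is near a fixed reference set, but for $x = x_n$ far away the number of chaining steps $n(x_n)$ grows; this interplay needs care). The correct scaling: choose $t_1 = t_1(n)$ with $t_1(n)/\Psi(\delta_D(x_n)) \to \infty$ but $t_1(n) \to 0$; then the exponential lower bound on $\phi_1(x_n)$ from the surviving-probability estimate becomes $\exp(-c_2 t_1(n)/\Psi(\delta_D(x_n)))$, and pairing with $\phi_1(x_n)\le C'(t_1(n))\Gamma(|x_n|)$ — being careful that $C'(t)$ may blow up as $t\to 0$ — gives, after taking logs and dividing, $\Psi(\delta_D(x_n))|\log\Gamma(|x_n|)| \le c_2 t_1(n) + \Psi(\delta_D(x_n))\log C'(t_1(n)) + o(1)$. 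A judicious choice of $t_1(n)\to 0$ with $\Psi(\delta_D(x_n))\log C'(t_1(n)) \to 0$ then forces $\limsup_n \Psi(\delta_D(x_n))|\log\Gamma(|x_n|)| \le \limsup c_2 t_1(n) = 0$, the desired contradiction with $\Psi(\delta_D(x_n))|\log\Gamma(|x_n|)| \ge \varepsilon$.

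\textbf{Main obstacle.} The delicate point is Step 3: making the two opposing bounds on $\phi_1(x_n)$ interlock at the right rate. The lower bound on $\phi_1(x_n)$ involves both a surviving-probability factor $e^{-c t/\Psi(\delta_D(x_n))}$ (small when $\delta_D(x_n)$ is small, for fixed $t$) and, implicitly, a returning-to-$D_0$ factor that decays as $x_n\to\infty$; meanwhile the upper bound $C'(t)\Gamma(|x_n|)$ has a $t$-dependent constant that degenerates as $t\to 0$. One must choose the time scale $t=t(n)\to 0$ so that the surviving-probability exponent stays negligible compared to $|\log\Gamma(|x_n|)|$ while the blow-up of $C'(t(n))$ remains controlled relative to $1/\Psi(\delta_D(x_n))$. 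Quantifying $C'(t)$ (which comes from $\OD_{\Gamma,\le}$ and the near-diagonal lower bound) and the precise dependence of the chaining length $n(x_n)$ on $|x_n|$ — i.e.\ keeping track of which estimates are uniform in the relevant parameters — is where the real work lies; the rest is soft.
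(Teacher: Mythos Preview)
Your overall strategy --- compare a survival-probability lower bound with an off-diagonal upper bound and take logarithms --- matches the paper's, but two things keep the argument from closing.

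First, Step~1 is misdirected. You try to bound $\phi_1(x)$ from below by chaining from $x$ back to a fixed $D_0$ (as in Proposition~\ref{p:lower1}), which inevitably introduces a ``returning-to-$D_0$'' factor decaying in $|x|$. That factor is fatal: once you take logarithms in Step~3, the term $\Psi(\delta_D(x_n))\,|\log(\text{returning factor})|$ can be as large as the quantity $\Psi(\delta_D(x_n))\,|\log\Gamma(|x_n|)|$ you are trying to control, and the inequality becomes vacuous. The paper avoids this entirely by using the \emph{two-sided} IU bound not to estimate $\phi_1$ but to compare two quantities at $x$: from $C_{t,D}^{-1}\phi_1(x)\phi_1(y)\le p^D(t,x,y)\le C_{t,D}\phi_1(x)\phi_1(y)$ one gets directly
\[
P_t^D\I_{B(x,\delta_D(x))}(x)\le c_{1,t}\,P_t^D\I_{D_0}(x)\,\delta_D(x)^d;
\]
then Lemma~\ref{l:sur_low} on the left and $\OD_{\Gamma,\le}$ on the right give the key inequality $e^{-c_3 t/\Psi(\delta_D(x))}\le c_{6,t}\,\Gamma(|x|/2)$ for all $t\in(0,t_0]$, with $c_3$ \emph{universal} and only $c_{6,t}$ depending on $t$. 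No chaining, no returning factor.

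Second, Step~3 is over-complicated and, as written, does not close: you propose $t_1(n)\to 0$, then rightly worry that $C'(t_1(n))$ may blow up, and leave the choice of $t_1(n)$ unresolved. In fact a \emph{fixed} small $t_1$ suffices. If $\Psi(\delta_D(x_n))\,|\log\Gamma(|x_n|/2)|\ge\varepsilon$ along the sequence, the key inequality (after logs) gives $(c_3 t-\varepsilon)/\Psi(\delta_D(x_n))\ge -\log c_{6,t}$ for each fixed $t\in(0,t_0]$; choosing any $t_1\in(0,t_0]$ with $c_3 t_1<\varepsilon$ makes the left side tend to $-\infty$ while the right side is a fixed finite constant --- contradiction. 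The point you missed is that the exponent constant $c_3$ from Lemma~\ref{l:sur_low} does \emph{not} depend on $t$, so you may pick $t_1$ after seeing $\varepsilon$, and the $t$-dependence of $c_{6,t}$ is then harmless.
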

\begin{proof}
Since $(P_t^D)_{t\ge0}$ is intrinsically
ultracontractive, then, for any $t>0$ there is a constant
$C_{t,D}\ge 1$ such that for all $x,$ $y\in D$,
$$C_{t,D}^{-1} \phi_1(x)\phi_1(y)\le p^D(t,x,y)\le C_{t,D} \phi_1(x)\phi_1(y),$$ see e.g. \cite[Theorem 3.2]{DS}.
Thus, for any $x\in D$ it holds that
$$P^D_t\I_{B(x,\delta_D(x))}(x)=\int_{B(x,\delta_D(x))}p^D(t,x,y)\,dy\le C'_{t,D} \|\phi_1\|_\infty \phi_1(x)\delta_D(x)^d.$$
For any compact subset $D_0$ of $D$ with $|D_0|>0$ and for any $x\in D$, we also have
$$P^D_t\I_{D_0}(x)=\int_{D_0}p^D(t,x,y)\,dy\ge C_{t,D}^{-1}\phi_1(x)\int_{D_0}\phi_1(y)\,dy\ge C''_{t,D_0,D} \phi_1(x).$$
Combining with both inequalities above, we know that for every
$t>0$ and any compact subset $D_0$ of $D$ with $|D_0|>0$,
there exists a constant $c_{1,t, D_0,D}>0$ such that
\begin{equation}\label{p3-1-3aa} P^D_t\I_{B(x,\delta_D(x))}(x)\le c_{1,t, D_0,D}P^D_t\I_{D_0}(x)\delta_D(x)^d, \quad x\in D.\end{equation}

Furthermore, by $\ND_{\Psi,\ge}$, \eqref{e:do} and Lemma \ref{l:sur_low}, there exist constants $c_2,c_3>0$ such that for every $t>0$ and $x\in D$ with $|x|$ large enough,
\begin{align}
P_{t}^D\I_{B(x,\delta_D(x))}(x)\ge c_2 e^{-c_3t/\Psi(\delta_D(x))}.
\label{p3-1-3a}
\end{align}
On the other hand, according to \eqref{e:do} and $\OD_{\Gamma,\le}$, there is a constant $t_0\in (0,1)$ such that for any
compact set $D_0\subset D$, $t\in (0,t_0]$
and $x \in D$ with $|x|\ge 2 R_0:=2\sup_{z\in D_0}|z|$ large enough,
\begin{equation}\label{p3-1-4a}
\begin{split}
P_{t}^D \I_{D_0}(x)\delta_D(x)^d&\le \int_{D_0}
p_D(t,x,y)\,dy\le c_{4,t, D}|D_0|\sup_{y \in D_0}\Gamma(|x-y|)\\
&\le c_{5,t,D_0, D}\Gamma(|x|-R_0)\le c_{5,t,D_0,D}\Gamma(|x|/2).
\end{split}
\end{equation}
Below, we fix a compact subset $D_0\subset D$ with $|D_0|>0$.
Therefore, combining \eqref{p3-1-3aa} with \eqref{p3-1-3a} and \eqref{p3-1-4a}, we arrive at that there is $t_0>0$ such that for all $t\in (0,t_0]$  and
$x \in D$ with $|x|$ large enough
so that $\delta(x)\le 1$,
\begin{equation}\label{e:ssll} e^{-c_3t/\Psi(\delta_D(x))}\le  c_{6,t}\Gamma(|x|/2).\end{equation}

Next, we assume that
\eqref{p3-1-2} does not hold.
Then there exist a constant $c_7>0$ and a sequence $\{x_n\}_{n=1}^{\infty}\subseteq D$ such that
$\lim_{n \rightarrow \infty}|x_n|=\infty$ and $
{|\log\Gamma(|x_n|/2)|} \Psi (\delta_D(x_n))\ge {c_7}
$ for all $n\in \N$. 
Thus, for any $t>0$,
\begin{equation*}\label{p3-1-5a}
\begin{split}
\exp\Big(-c_7t/\Psi(\delta_D(x_n))\Big)\ge \exp\Big(-t|\log\Gamma(|x_n|/2)| \Big)=\Gamma(|x_n|/2) ^{t}.
\end{split}
\end{equation*}
Taking $t =t_1:=((c_3 t_0)/(2c_7))\wedge(1/2)$ in the inequality above and using \eqref{e:ssll},
we get
$$0< c_{8,t_1}^{-1/(1-t_1)} \le \Gamma(|x_n|/2).$$
Since $\lim_{s \rightarrow \infty}
\Gamma(s)=0$,
letting  $n \to \infty$ we get a contradiction. 
 That is,
if $(P_t^D)_{t \ge 0}$ is intrinsically
ultracontractive, then \eqref{p3-1-2} does hold. The proof is complete.
\end{proof}

\section{
Intrinsic ultracontractivity
of Dirichlet semigroups: explicit results
}\label{sec444}
We continue considering the symmetric Hunt process $X=\{X_t,t\ge0; \Pp^x,x\in \R^d\}$ as in Subsection \ref{subsection1-1}. The associated Dirichlet form $(\mathscr{E}, \mathscr{F})$ is given by  \eqref{non-local}.
Let $D$ be an open set of $\R^d$.  Denote by $(P_t^D)_{t\ge0}$ and $p^D(t,x,y)$ the Dirichlet semigroup and the Dirichlet heat kernel associated with the killed process $X^D$ of the process $X$ upon exiting $D$, respectively. \emph{Throughout this section, we suppose that
both $\J_{\Phi,\ge}$ and $\ext_{\Psi,\ge }$ hold.} See Definitions \ref{de:J} and \ref{d:ext}.
Recall that we also always assume that the Dirichlet heat kernel $p^D(t,x,y)$ exists, and $p(t,\cdot,\cdot)$ is bounded, continuous and strictly positive on $D\times D$ for every $t>0$.

In this section, we will apply results in previous sections to establish
criteria for the intrinsically ultracontractivity of $(P^D_t)_{t\ge0}$ on two specific
types of open sets.
One is horn-shaped regions, and the other one is unbounded and disconnected open sets with locally $\kappa$-fat property.

\subsection{Horn-shaped regions}\label{sec4444}

 Let $D_f=\{x\in \R^d: x_1>0, |\tilde x|< f(x_1)\}$ be a horn-shaped region with the reference function $f$, where  $f:(0,\infty)\to (0,\infty)$ is bounded, continuous and satisfies that $\lim_{u\to\infty} f(u)=0$.
 As mentioned above, we assume that $\J_{\Phi,\ge}$ holds.
Then, according to
Corollary \ref{c:kacomp},
it is easy to see that
the semigroup $(P_t^{D_f})_{t\ge0}$ is compact if
$D_f^c$ is $\kappa$-fat.
Furthermore, by Proposition \ref{p5-1-0} and the assumption that the Dirichlet heat kernel $p^{D_f}(t,x,y)$ exists and $p^{D_f}(t,\cdot,\cdot)$ is bounded, continuous and strictly positive on $D_f\times D_f$ for every $t>0$, the corresponding ground state $\phi_1$ can be chosen to be bounded, continuous and strictly positive on $D_f$.

To illustrate how powerful Theorem \ref{T:IU} is, we begin with horn-shaped regions with general reference functions.
For non-negative measurable function $f$, let $f^*(r)=\sup_{s\ge r} f(s)$ and ${f^*}^{-1}(r)=\inf\{s>0: f^*(s)\le r\}$ for $r>0$.

\begin{proposition}\label{p4-2}
Assume that $\J_{\Phi,\ge}$ and $\ext_{\Psi,\ge }$ hold.  Let $D_f$ be a horn-shaped region
such that $D_f^c$ is
$\kappa$-fat.
Then, the following two statements hold true.

\begin{itemize}
\item[(1)] There are positive constants
$c_1, c_2$
such that for all $x\in D_f$,
\begin{equation*}
\phi_1(x)\ge c_1 \Psi(\delta_{D_f}(x)\wedge c_2)\left(\inf_{|y-z|\le |x|+
c_1} J(y,z)\right).
\end{equation*}

\item[(2)]
The super Poincar\'{e} inequality \eqref{e:whole} holds with
\begin{equation}\label{p4-2-1}
\begin{split}
\beta(s)=&C_1 (\Phi^{-1}(s)\wedge 1)^{-d} \Psi(\Phi^{-1}(s)\wedge 1)^{-2} \sup_{|y-z|\le
C_1{f^*}^{-1}(C_2(\Phi^{-1} (s)\wedge 1))}\frac{1}{J(y,z)^2}.
\end{split}
\end{equation}
Here $C_1, C_2$ are positive constants.
Consequently, the
semigroup $(P^{D_f}_t)_{t\ge0}$ is intrinsically ultracontractive, if
$\beta(s)$ given above
satisfies \eqref{e:iurate}.\end{itemize}
\end{proposition}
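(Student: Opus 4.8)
The plan is to derive part (1) directly from Proposition \ref{p:lower1} and part (2) from Theorem \ref{T:IU}; the only genuine work lies in part (2), namely in choosing the free parameters $R,r$ of Theorem \ref{T:IU} as explicit functions of $s$ and then bounding the corresponding quantity $\alpha(R,r,s/2;\phi_1)$ by the right-hand side of \eqref{p4-2-1}.

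First I would record the standing facts. For $x=(x_1,\tilde x)\in D_f$ with $|x|\ge 2\|f\|_\infty$ one has $x_1^2=|x|^2-|\tilde x|^2>|x|^2-\|f\|_\infty^2\ge|x|^2/4$ (since $|\tilde x|<f(x_1)\le\|f\|_\infty$), so $x_1\ge|x|/2$; moreover, moving from $x$ in the $\tilde x$-direction reaches $\partial D_f$ within distance $f(x_1)-|\tilde x|\le f(x_1)$, whence
\[
\delta_{D_f}(x)\le f(x_1)\le f^*(x_1)\le f^*(|x|/2),
\]
which tends to $0$ as $|x|\to\infty$. Thus \eqref{e:do} holds and, since $\J_{\Phi,\ge}$ is assumed and $D_f^c$ is $\kappa$-fat, Corollary \ref{c:kacomp} gives that $(P_t^{D_f})_{t\ge0}$ is compact; hence the ground state $\phi_1$ exists and is bounded, continuous and strictly positive. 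Proposition \ref{p3-1} also yields $V_{D_f}(x)\ge c_2\Phi(\delta_{D_f}(x))^{-1}$ once $\delta_{D_f}(x)\le1/2$, which together with the display above and $\Phi(0+)=0$ forces $V_{D_f}(x)\to\infty$ as $|x|\to\infty$; so \eqref{e:com-3} holds and Theorem \ref{T:IU} is applicable. Part (1) is then immediate: $\ext_{\Psi,\ge}$ holds on $\R^d$ and hence on $D_f$, so applying Proposition \ref{p:lower1} at an arbitrary fixed $x_0\in D_f$ and using $|x-x_0|\le|x|+|x_0|$ gives the asserted lower bound for $\phi_1$.

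For part (2), Theorem \ref{T:IU} gives \eqref{e:whole} with $\beta(s)=\inf\{\alpha(R,r,s/2;\phi_1):0<r<1/2,\ \inf_{x\in D_f\setminus(D_f)_{R,r}}V_{D_f}(x)\ge 1/s\}$, so it suffices to exhibit, for each (small) $s>0$, an admissible pair $(R_s,r_s)$ for which $\alpha(R_s,r_s,s/2;\phi_1)$ is controlled by the target. If $\delta_{D_f}(x)\le r$, then $\Phi(\delta_{D_f}(x))\le\Phi(r)$, so Proposition \ref{p3-1} gives $V_{D_f}(x)\ge c_2\Phi(r)^{-1}$; and if $|x|\ge R$ with $R\ge2\|f\|_\infty$, then $\delta_{D_f}(x)\le f^*(R/2)$, so $V_{D_f}(x)\ge c_2\Phi(f^*(R/2))^{-1}$. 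Hence the constraint $\inf_{x\in D_f\setminus(D_f)_{R,r}}V_{D_f}(x)\ge1/s$ is met once $r_s$ is taken (slightly below $1/2$) with $r_s\asymp\Phi^{-1}(s)\wedge1$ and $c_2\Phi(r_s)^{-1}\ge1/s$, and $R_s\asymp {f^*}^{-1}(\Phi^{-1}(s))$ with $c_2\Phi(f^*(R_s/2))^{-1}\ge1/s$. Using part (1) on $(D_f)_{R_s+1,r_s/2}$, where $\delta_{D_f}>r_s/2$ and $|\cdot|<R_s+1$, we obtain
\[
\inf_{x\in (D_f)_{R_s+1,r_s/2}}\phi_1(x)\ge c_1\,\Psi\big((r_s/2)\wedge c_2\big)\,\inf_{|y-z|\le R_s+1+c_1}J(y,z),
\]
and therefore, by the formula for $\alpha$ in Proposition \ref{p:local},
\[
\beta(s)\le\alpha(R_s,r_s,s/2;\phi_1)\le\frac{c\,\big(\Phi^{-1}(s/2)\wedge r_s\wedge1\big)^{-d}}{\Psi\big((r_s/2)\wedge c_2\big)^2\,\big(\inf_{|y-z|\le R_s+1+c_1}J(y,z)\big)^2}.
\]
Finally I would use the weak scaling \eqref{e:Phi} of $\Phi$ (hence of $\Phi^{-1}$) and \eqref{e:Psi} of $\Psi$ to pass, up to multiplicative constants, from $\Phi^{-1}(s/2)\wedge r_s\wedge1$ and $(r_s/2)\wedge c_2$ to $\Phi^{-1}(s)\wedge1$, and from $R_s+1+c_1$ to $C_1{f^*}^{-1}(C_2(\Phi^{-1}(s)\wedge1))$ for suitable $C_1,C_2>0$, and to rewrite $(\inf J)^{-2}$ as $\sup J^{-2}$; this yields precisely \eqref{p4-2-1}. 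The last assertion --- intrinsic ultracontractivity of $(P_t^{D_f})_{t\ge0}$ whenever this $\beta$ satisfies \eqref{e:iurate} --- is then exactly the second statement of Theorem \ref{T:IU}.

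The step I expect to be the main obstacle is showing that a single pair $(R_s,r_s)$ simultaneously controls both pieces of $D_f\setminus(D_f)_{R,r}$ --- the part near $\partial D_f$, handled via $r_s$, and the part far from the origin, handled via $R_s$ --- which is precisely where the horn-shaped geometry estimate $\delta_{D_f}(x)\le f^*(|x|/2)$ and the $\kappa$-fatness of $D_f^c$ (through Proposition \ref{p3-1}) are used; once this is in place, the identification with \eqref{p4-2-1} is a routine scaling computation.
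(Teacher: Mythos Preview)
Your proposal is correct and follows essentially the same approach as the paper: part (1) is a direct application of Proposition \ref{p:lower1}, and part (2) proceeds by plugging the choice $r_s\asymp\Phi^{-1}(s)\wedge1$ and $R_s\asymp{f^*}^{-1}(c\,(\Phi^{-1}(s)\wedge1))$ into the infimum of Theorem \ref{T:IU}, using \eqref{p3-1-1-1} to verify the constraint on $V_{D_f}$ and part (1) to bound $\inf\phi_1$ on $(D_f)_{R_s+1,r_s/2}$. Your treatment is in fact slightly more explicit than the paper's on the geometric point $x_1\ge|x|/2$ (for $|x|\ge2\|f\|_\infty$) needed to pass from $|x|\ge R$ to $\delta_{D_f}(x)\le f^*(R/2)$.
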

\begin{proof}
(1) The first assertion is a consequence of \eqref{e:lower1-1}.

\smallskip

(2) Denote by $D=D_f$ for simplicity. Since $D^c$ is $\kappa$-fat, we can use \eqref{p3-1-1-1} in Proposition \ref{p3-1}. Let $c_0>0$
be the constant $c_2$ in \eqref{p3-1-1-1}.
Applying
\eqref{p3-1-1-1} and \eqref{beta}, we find that \eqref{e:whole} holds with the rate function $\beta(s)$ satisfying that
\begin{align*}
\beta(s)&\le c_1\inf\left\{
\frac{ (\Phi^{-1}(s)\wedge r)^{-d}}{\inf_{x\in D_{R+1, r/2} }\phi_1^2(x)}:
\inf_{x\in
D\setminus D_{R,r}}V_D(x)\ge 1/s
\textrm{ and  }r \in (0,1/2)  \right\}\\
&\le c_1\inf\left\{
\frac{ (\Phi^{-1}(s)\wedge r)^{-d}}{\inf_{x\in D_{R+1, r/2} }\phi_1^2(x)}:
    \sup_{x\in D\setminus D_{R,r}}{\Phi(\delta_D(x))} \le c_0 s
  \textrm{ and }r \in (0,1/2) \right\}.
\end{align*}
According to \eqref{e:Phi}, we can take a constant $c_* \in (0, r_0/2)$ small enough so that $c_* \Phi^{-1}(s) \le \Phi^{-1}(c_0s)$  for all $s>0$.
Next, we choose $s_0>0$ small enough such that for any $s \in (0,s_0]$,
$r=c_*(\Phi^{-1}(s)\wedge 1) < 1/2$
and $R={f^{*-1}}\big(c_*(\Phi^{-1}(s)\wedge 1)\big)<\infty$. Taking this $r$ and $R$ in the infimum of the last term in the display above, and using assertion (1) for lower bound estimates of $\phi_1$, we
find that $\beta(s)$ is not bigger than $$c_2
{ (\Phi^{-1}(s)\wedge 1)^{-d}}\Psi(\Phi^{-1}(s)\wedge 1)^{-2}
\sup_{|y-z|\le
{f^*}^{-1}(c_*(\Phi^{-1} (s)\wedge 1))}\frac{1}{J(y,z)^2}.$$ This proves the second assertion.
\end{proof}

Instead of
 Proposition \ref{p:lower1}, we can use Proposition \ref{p:lower2} to obtain the following result.
\begin{proposition}\label{p4-1}
Assume that $\J_{\Phi,\ge}$ and  $\ext_{\Psi,\ge }$ hold. Let $D_f$ be a horn-shaped region such that $D_f^c$ is $\kappa$-fat.
Then the following statements hold true.
\begin{itemize}
\item[(1)]  There are positive constants $c_1, c_2$ such that for all $x\in D_f$,
\begin{equation*}\begin{split}\label{e:phi-f} \phi_1(x)\ge &c_1 \Psi(\delta_{D_f}(x)\wedge c_1)\exp\left(-c_2(1+|x|)\log \left(e+\frac{|x|}{\inf\limits_{c_1\le s\le |x|+1} f(s)}\right)\right).\end{split}\end{equation*}

\item[(2)] The super Poincar\'{e} inequality \eqref{e:whole} holds with
\begin{equation}\label{p4-1-22}
\begin{split}
\beta(s)=&C_1 (\Phi^{-1}(s)\wedge 1)^{-d} \Psi(\Phi^{-1}(s)\wedge 1)^{-2}\\
&\times \exp\bigg\{C_1\bigg[\big(1+F(s)\big)
\log\bigg(e+\frac{F(s)}{\inf_{C_2\le r \le F(s)} f(r)}\bigg)\bigg]\bigg\},
\end{split}\end{equation}
where $$F(s)={f^*}^{-1}(C_3(\Phi^{-1}(s)\wedge1)$$ and
$C_i$ $(i=1,2,3)$ are positive constants.
Consequently, the
semigroup $(P^{D_f}_t)_{t\ge0}$ is intrinsically ultracontractive, if
$\beta(s)$ defined above
satisfies \eqref{e:iurate}.
 \end{itemize}
\end{proposition}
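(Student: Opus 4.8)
The plan is to mimic the proof of Proposition \ref{p4-2}, replacing the lower bound for $\phi_1$ coming from Proposition \ref{p:lower1} by the one from Proposition \ref{p:lower2}; this is what makes the argument work for reference functions decaying faster than any polynomial. Throughout I would write $D=D_f$.

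For part (1), first note that $\J_{\Phi,\ge}$ implies \eqref{e:Jlow}, since $J(x,y)\ge C_0|x-y|^{-d}\Phi(|x-y|)^{-1}\ge C_0/\Phi(1)>0$ for $0<|x-y|\le 1$ because $\Phi$ is increasing; and $\ext_{\Psi,\ge}$ holds on $D_f$ by hypothesis. Fix $x_0\in D_f$. The key point is to verify the hypothesis of Proposition \ref{p:lower2}: for every $x\in D_f$ one can choose $0<a_1\le a_2<1$ (depending only on $f$) and a chain $\{x^{(i)}\}_{i=0}^n$ with $x\sim_{(n;a_1,a_2)}x_0$ such that $n\le c(1+|x|)$ and $r_i\ge c\big(\inf_{c_1\le s\le|x|+1}f(s)\wedge 1\big)$ for $0\le i\le n-1$. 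I would build the chain as follows: travel in boundedly many steps from $x_0$ to a point near the $x_1$-axis at height $\approx x_{0,1}$; then march along a thin tube around the axis with steps of length $\approx a\in(a_1,a_2)$ up to axis-height $\approx x_1$; finally add one step landing exactly on $x$ (when $x$ is within $a_1$ of $\partial D_f$, which for $|x|$ large forces $f(x_1)$ small, this last step is realized by stepping forward along the axis, whose admissibility uses $f(x_1)\le f^*(x_1)\to 0$). Along the near-axis portion, a point $x^{(i)}$ at height $s$ satisfies $\delta_{D_f}(x^{(i)})\ge c\wedge\inf_{|u-s|\le 1}f(u)$, since any competitor for $\delta_{D_f}$ is either a lateral point $(u,w)$ with $|w|=f(u)$ and $|u-s|\le 1$, or has $|u-s|\ge 1$. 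Plugging these bounds into \eqref{e:lower2-1} and using $n\log n+\sum_{i=0}^{n-1}\log(1/r_i)\le c(1+|x|)\log\big(e+|x|/\inf_{c_1\le s\le|x|+1}f(s)\big)$ gives assertion (1); the precompact piece $\{x\in D_f:|x|\le R_1\}$, where the last-step construction may fail, is handled by continuity and strict positivity of $\phi_1$.

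For part (2), I would repeat the argument of Proposition \ref{p4-2}(2), only with the sharper lower bound for $\phi_1$. Since $D_f^c$ is $\kappa$-fat, Proposition \ref{p3-1} (i.e.\ \eqref{p3-1-1-1}) gives $V_{D_f}(x)\ge c_0/\Phi(\delta_{D_f}(x))$ once $\delta_{D_f}(x)\le 1/2$. A direct inspection of the horn shows $\delta_{D_f}(x)\le x_1\wedge f(x_1)\le f^*(|x|-M)$ for $|x|>M:=\sup_{u>0}f(u)$, where $f^*$ is the non-increasing majorant of $f$. Using \eqref{e:Phi} to pass to clean constants, I would choose a small $c_*>0$, set $r=c_*(\Phi^{-1}(s)\wedge 1)<1/2$ and $R\asymp {f^*}^{-1}(c_*(\Phi^{-1}(s)\wedge 1))=:F(s)$, and check that $\inf_{x\in D_f\setminus D_{R,r}}V_{D_f}(x)\ge 1/s$, so that $(R,r)$ is admissible in the infimum defining $\beta(s)$ in \eqref{beta} and hence $\beta(s)\le\alpha(R,r,s/2;\phi_1)$. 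The prefactor $(\Phi^{-1}(s/2)\wedge r\wedge 1)^{-d}\asymp(\Phi^{-1}(s)\wedge1)^{-d}$ by \eqref{e:Phi}; and on $D_{R+1,r/2}$, where $|x|<R+1$ and $\delta_{D_f}(x)>r/2$, part (1) together with \eqref{e:Psi} gives
\[
\inf_{x\in D_{R+1,r/2}}\phi_1^2(x)\ge c\,\Psi(\Phi^{-1}(s)\wedge 1)^{2}\exp\!\Big(-c\,(1+F(s))\log\big(e+\tfrac{F(s)}{\inf_{C_2\le r\le F(s)}f(r)}\big)\Big).
\]
Substituting this into the formula for $\alpha$ and tidying up yields \eqref{p4-1-22}, and the final assertion is then immediate from Theorem \ref{T:IU}: if this $\beta$ satisfies \eqref{e:iurate}, then $(P^{D_f}_t)_{t\ge0}$ is intrinsically ultracontractive.

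I expect the main obstacle to be the construction in part (1): one must show that the reasonable-path condition of Definition \ref{d:c-reason} can be met with $n\lesssim 1+|x|$ and with every $r_i$ bounded below by a constant multiple of $\inf_{c_1\le s\le|x|+1}f(s)\wedge 1$, even though $f$ is only continuous (not monotone) and the horn becomes arbitrarily thin at infinity. The two delicate points are the realizability of the very last step when $x$ lies within $a_1$ of $\partial D_f$, and the uniform lower bound for $\delta_{D_f}$ along the near-axis part of the chain; both reduce to elementary planar geometry and only affect the implicit constants in \eqref{p4-1-22}.
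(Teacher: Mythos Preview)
Your approach is essentially the paper's: part~(1) via Proposition~\ref{p:lower2} with an axis-aligned chain of step size in a fixed interval $[a_1,a_2]\subset(0,1)$, using the geometric bound $\delta_{D_f}(x^{(i)})\gtrsim\inf f$ along the axis; part~(2) by repeating verbatim the proof of Proposition~\ref{p4-2}(2) with the improved $\phi_1$-lower bound plugged in. The paper even omits the details of part~(2) for exactly this reason.

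There is one genuine gap in your treatment of part~(1). The bounded piece $\{x\in D_f:|x|\le R_1\}$ cannot be ``handled by continuity and strict positivity of $\phi_1$'' alone: this set is \emph{not} compact in $D_f$, since its closure meets $\partial D_f$, and $\phi_1$ is not bounded away from zero there. You still owe the boundary factor $\Psi(\delta_{D_f}(x)\wedge c_1)$ on the right-hand side for points near $\partial D_f$ with small $|x|$. The paper closes this by observing that $D_{r_*}:=\{x\in D_f:x_1<2r_*\}$ is a bounded connected domain, so for every $x\in D_{r_*}\setminus B(x_0,s_0)$ one can still build a reasonable chain to $x_0$ of \emph{uniformly bounded} length $n(x)\le N_0$ with all intermediate points satisfying $\delta_{D_f}(x^{(i)})\ge s_1>0$, and then apply Proposition~\ref{p:lower2}; the direct appeal to $\inf\phi_1>0$ is used only on the compact ball $\overline{B(x_0,s_0)}\subset D_f$. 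With this correction your argument goes through.
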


\begin{proof} We
 denote $D=D_f$ throughout the proof.

\smallskip

(1) Choose $r_*\ge 4$ large enough such that $f(r)\le 2^{-5}$ for any
$r\ge r_*-1$.
We first
consider $x\in D$ with
$x_1\ge 2r_*$. Take $x^{(i)}=(r_*+{i(x_1-r_*)}/{n},\tilde 0)=:(x_1^{(i)},\tilde 0)$ for $0\le
i\le n-1$ with
$n:=\lceil 4(x_1-r_*)\rceil$,
 and
$x^{(n)}=(x_1,\tilde x)=x$. Then,
$$2^{-3}\le |x^{(i-1)}-x^{(i)}|=\frac{x_1-r_*}{n}\le 2^{-2},\quad 1\le i\le n-1,$$
and
$$ 2^{-3}\le \frac{x_1-r_*}{n}\le |x^{(n-1)}-x^{(n)}| \le \frac{x_1-r_*}{n}+f(x_1)\le 2^{-1}.$$
Let
$$r_i=\frac{\delta_{D}(x^{(i)})}{3}\wedge2^{-5} \wedge \tilde r_0,\quad 0\le i\le n-1  \quad \text{ and } \quad r_n=\frac{\delta_D(x)}{3}\wedge
2^{-5}\wedge \tilde r_0,$$
where $\tilde r_0$ denotes the constant in \eqref{e:exit}.
By the definition of horn-shaped region, for all $0\le i\le n-1$,
$$r_i\ge c_1\inf_{|s-{x_1^{(i)}}|\le f({x_1^{(i)}})} f(s)\ge c_2 \inf_{r^*-1\le s\le |x|+1} f(s).$$ Indeed, let $y=(y_1, \tilde{y})\in \partial D$ such that $|x^{(i)}-y|=\delta_D(x^{(i)})$. Since $|\tilde{y}|=f(y_1)$ and $|\widetilde{x^{(i)}}|=0$,
we have
\begin{equation}\label{e:point}
\begin{split}
\delta_D(x^{(i)})&{ \ge |\tilde y-\tilde x^{(i)}|}= f(y_1)\ge \inf_{s\in { B(x_1^{(i)}, \delta_D(x^{(i)}))}} f(s)\ge \inf_{s\in {B(x_1^{(i)}, f( x_1^{(i)}))}} f(s).
\end{split}
\end{equation}

 Therefore,
combining all the estimates above with \eqref{e:lower2-1}, we obtain
that for every $x_1\ge 2r_*$,
\begin{equation*}\label{e:aa}
\begin{split}
\phi_1(x) & \ge c_3\Psi(\delta_D(x)\wedge
c_3)
\exp\Bigg[-c_4\Big(n\log n+\sum_{i=0}^{n-1}\log\frac{1}{r_i}\Big)\Bigg]\\
&\ge c_3\Psi(\delta_D(x)\wedge
c_3)\exp\Bigg[-c_5 |x|\log \Bigg(\frac{|x|}{\inf\limits_{r^*-1\le s\le |x|+1} f(s)}\Bigg) \Bigg],
\end{split}
\end{equation*}
where in the last inequality we have used the fact that
$c_6|x|\le n \le c_7|x|$ if $x_1\ge 2r_*$. This proves that \eqref{p4-1-22} holds for every $x\in D$ such that $x_1\ge 2r_*$.

Next, we consider $x\in D$ such that $x_1< 2r_*$.  Since $D_{r_*}:=\{x\in D: x_1< 2r_*\}$ is bounded, we can find $x_0\in D$ and positive constants
$s_0,s_1,a_1,a_2,N_0$ such that
\begin{itemize}
\item [(i)] $\overline{B(x_0,s_0)}\subseteq D_{r_*}$;

\item [(ii)] For every
$x\in D_{r_*}\setminus B(x_0,s_0)$,
 $x\sim_{(n(x); a_1, a_2)} x_0$ for some positive integer
$n(x)\le N_0$, and the connected points $\{x^{(i)}:1\le i \le n(x)\}$ satisfies that $\delta_D(x^{(i)})\ge s_1>0$
for all $1\le i \le n(x)$.
\end{itemize}
Therefore, by \eqref{e:lower2-1} we know that \eqref{p4-1-22} holds for every $x\in D_{r_*}\setminus B(x_0,s_0)$.
On the other hand, since $\overline{B(x_0,s_0)}\subseteq D$ and
$\phi_1$ is continuous, strictly positive on $D$,
$\inf_{z \in B(x_0,s_0)}\phi_1(z)\ge c_8$ for some constant $c_8>0$. So, by changing the constants properly,
\eqref{p4-1-22} still holds for any
$x \in D_{r_*}$.

Combining all the estimates above, we have shown that \eqref{p4-1-22} holds for all $x \in D$.

\smallskip

(2) With (1) at hand, the argument for the proof  of (2) is the same as that for Proposition \ref{p4-2} (2). So we skip the details.
 \end{proof}

As a consequence of Propositions \ref{p4-2} and \ref{p4-1}, we have the following corollary.

\begin{corollary}\label{th1}
Assume that $\J_{\Phi,\ge}$ and $\ext_{\Psi,\ge }$ hold.
Let $D_f$ be a horn-shaped region such that $D_f^c$ is
$\kappa$-fat. Then, the following statements hold.
\begin{itemize}
\item[(1)]
Suppose that
there exist constants $\alpha,\theta, s_0, c_1,c_2>0$ such that
\begin{align}
\label{e:Expl0}
J(x,y)\ge c_1|x-y|^{-\alpha},\quad |x-y|>1,
\end{align}
and
\begin{equation}\label{th1-2}
 f(s) \le c_2\Phi^{-1}(\log^{-\theta }s),\quad s\ge s_0.
\end{equation}
 Then
$(P_t^{D_f})_{t \ge 0}$ is intrinsically ultracontractive if
$
\theta>{1}.
$

\item[(2)]
Suppose that
there exist constants $\theta, s_0,c_1, c_2,c_3>0$ and $\gamma\in (0,\infty]$
 such that \begin{align}
\label{e:Expl}
J(x,y)\ge c_1e^{-c_2|x-y|^{\gamma}},\quad |x-y|>1,
\end{align}
and
\begin{equation}\label{th1-4}
 f(s)\le c_3 \Phi^{-1}( s^{-\theta}),\quad s\ge s_0.
\end{equation}
Then
$(P^{D_f}_t)_{t\ge0}$ is intrinsically ultracontractive if
$
\theta>{\gamma}.
$

\item[(3)] Suppose that $\gamma \in (1,\infty]$ in \eqref{e:Expl}
and that
\begin{equation}\label{th1-4-1}
 c_3s^{-\eta}\le f(s)\le c_4 \Phi^{-1}( s^{-\theta}),\quad s\ge s_0
\end{equation}
for some constants $c_3, c_4,s_0,\theta,\eta>0$. Then
$(P^{D_f}_t)_{t\ge0}$ is intrinsically ultracontractive if
$
\theta>1.
$

\item[(4)] Suppose that $\gamma \in (1,\infty]$ in \eqref{e:Expl}
and that
 $$
 c_3\exp(-c_4s^{\theta_1})\le f(s)\le c_5 \exp(-c_6s^{\theta_2}),\quad s\ge s_0
$$
 for some constants $c_3, c_4, c_5, c_6, s_0>0$ and $\theta_1\ge \theta_2>0$.  Then
$(P^{D_f}_t)_{t\ge0}$ is intrinsically ultracontractive.
\end{itemize}
\end{corollary}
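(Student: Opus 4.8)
The plan is to feed the hypotheses on the jumping kernel and on the reference function into the super Poincar\'e inequalities established in Propositions \ref{p4-2} and \ref{p4-1}, obtain in each of the four cases an explicit upper bound on the rate function $\beta(s)$ as $s\downarrow0$, and then verify the integrability criterion \eqref{e:iurate} by estimating $\beta^{-1}$ and integrating. For the heavy-tail cases (1) and (2) --- where $J$ decays polynomially or exponentially --- I would invoke Proposition \ref{p4-2}(2), whose rate function is governed by $\sup_{|y-z|\le C_1{f^*}^{-1}(C_2(\Phi^{-1}(s)\wedge1))}J(y,z)^{-2}$: the large-jump lower bounds \eqref{e:Expl0} and \eqref{e:Expl} bound this supremum from above, while the upper bounds \eqref{th1-2} and \eqref{th1-4} on $f$ bound ${f^*}^{-1}$. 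For the light-tail cases (3) and (4), where $\gamma>1$ and Proposition \ref{p4-2} is too lossy (its lower bound for $\phi_1$ through $\inf J$ decays faster than exponentially), I would instead use Proposition \ref{p4-1}(2), whose rate function involves $F(s)={f^*}^{-1}(C_3(\Phi^{-1}(s)\wedge1))$ together with $\inf_{C_2\le r\le F(s)}f(r)$; here both the upper and the lower bound on $f$ enter. Proposition \ref{p4-1} never uses \eqref{e:Expl}, so in (3) and (4) the hypothesis $\gamma\in(1,\infty]$ serves only to single out the regime in which these statements refine (2).

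Two preliminary remarks take care of all the routine estimates. First, the weak scaling \eqref{e:Phi} gives $\Phi(c\,\Phi^{-1}(s))\simeq s$ for every fixed $c>0$ and, for small $s$, $c_1s^{1/\la}\le\Phi^{-1}(s)\le c_2s^{1/\ua}$; combined with \eqref{e:Psi} this shows that the polynomial prefactor $(\Phi^{-1}(s)\wedge1)^{-d}\Psi(\Phi^{-1}(s)\wedge1)^{-2}$ appearing in both $\beta$-formulas is bounded by a fixed negative power of $s$, hence is absorbed by the dominant (quasi-)exponential factor. Second, if $f(r)\le g(r)$ with $g$ eventually decreasing then $f^*(r)\le g(r)$ for large $r$, so ${f^*}^{-1}(u)\le g^{-1}(u)$ for small $u$; applying this together with the scaling of $\Phi$ yields ${f^*}^{-1}(C_2\Phi^{-1}(s))\le\exp(c\,s^{-1/\theta})$ in case (1), ${f^*}^{-1}(C_2\Phi^{-1}(s))\le c\,s^{-1/\theta}$ in cases (2) and (3), and $F(s)\le c\,(\log(1/s))^{1/\theta_2}$ in case (4). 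The lower bounds on $f$ give, in turn, $\inf_{C_2\le r\le F(s)}f(r)\ge c\,F(s)^{-\eta}$ in case (3) and $\inf_{C_2\le r\le F(s)}f(r)\ge c\,e^{-c'F(s)^{\theta_1}}$ in case (4), the infimum being attained at the right endpoint $r=F(s)$.

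Substituting, one finds for small $s$: in case (1), $\sup J^{-2}\lesssim({f^*}^{-1})^{2\alpha}$, so $\beta(s)\le\exp(c\,s^{-1/\theta})$, whence $\beta^{-1}(t)\lesssim(\log t)^{-\theta}$ and $\int_t^\infty\beta^{-1}(s)s^{-1}\,ds\lesssim\int_{\log t}^\infty u^{-\theta}\,du$, which is finite when $\theta>1$; in case (2), $\sup J^{-2}\lesssim\exp(c\,({f^*}^{-1})^{\gamma})$, so $\beta(s)\le\exp(c\,s^{-\gamma/\theta})$, $\beta^{-1}(t)\lesssim(\log t)^{-\theta/\gamma}$, and the integral is finite when $\theta>\gamma$; in case (3) the exponent of $\beta$ is at most a constant times $s^{-1/\theta}\log(1/s)$, so inverting gives $\beta^{-1}(t)\lesssim(\log\log t)^{\theta}(\log t)^{-\theta}$ and $\int_t^\infty\beta^{-1}(s)s^{-1}\,ds\lesssim\int_{\log t}^\infty(\log u)^{\theta}u^{-\theta}\,du$, which is finite when $\theta>1$; finally in case (4), since $\theta_1\ge\theta_2$ one has $F(s)^{\theta_1}\lesssim(\log(1/s))^{\theta_1/\theta_2}$, so the exponent of $\beta$ is at most a constant times $(\log(1/s))^{q}$ with $q:=(\theta_1+1)/\theta_2>1$, and hence $\beta^{-1}(t)\lesssim\exp(-c(\log t)^{1/q})$; since $1/q\in(0,1)$, $\int_t^\infty\beta^{-1}(s)s^{-1}\,ds\lesssim\int_{\log t}^\infty e^{-cu^{1/q}}\,du<\infty$ with no restriction on the parameters. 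In every case \eqref{e:iurate} holds under the stated hypothesis, and the conclusion follows from the last assertion of Proposition \ref{p4-2} in cases (1) and (2), and of Proposition \ref{p4-1} in cases (3) and (4).

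The one point where care is genuinely needed is the verification of \eqref{e:iurate} in cases (3) and (4): the logarithmic and double-logarithmic corrections must be kept visible --- in (3) to see that the threshold is precisely $\theta=1$, and in (4) to see that the super-exponential decay of $f$ together with $\theta_1\ge\theta_2$ forces the exponent of $\beta$ to be a power of $\log(1/s)$ strictly larger than $1$, which is exactly what makes the tail integral converge for every admissible choice of parameters. The remaining steps are routine bookkeeping with the generalized inverses via \eqref{e:Phi} and \eqref{e:Psi}.
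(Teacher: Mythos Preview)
Your proposal is correct and follows essentially the same route as the paper: use Proposition~\ref{p4-2}(2) for cases (1) and (2) and Proposition~\ref{p4-1}(2) for cases (3) and (4), translate the assumptions on $f$ into bounds on ${f^*}^{-1}\circ\Phi^{-1}$, absorb the polynomial prefactor via \eqref{e:Phi} and \eqref{e:Psi}, and read off the same bounds $\beta(s)\le c\exp(c's^{-1/\theta})$, $\beta(s)\le c\exp(c's^{-\gamma/\theta})$, $\beta(s)\le c\exp(c's^{-1/\theta}\log(1/s))$, and $\beta(s)\le c\exp(c'(\log(1/s))^{(1+\theta_1)/\theta_2})$ that the paper obtains. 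Your treatment is in fact slightly more thorough than the paper's, since you write out the inversion $\beta^{-1}$ and the resulting tail integral explicitly in each case rather than declaring \eqref{e:iurate} ``easy to see''; the only (harmless) over-precision is the remark in case (4) that $1/q\in(0,1)$ is needed for $\int e^{-cu^{1/q}}\,du<\infty$, whereas this integral converges for every $q>0$.
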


\begin{proof}
(1) By \eqref{th1-2}, for $s>0$ small enough, it holds that
$${f^*}^{-1} \circ \Phi^{-1} (s) \le c_1\exp\big(c_2s^{-{1}/{\theta}}\big).$$
From the above estimate, \eqref{e:Phi}, \eqref{e:Psi}, \eqref{p4-2-1} and
the assumption \eqref{e:Expl0}, we know that \eqref{e:whole} holds with
\begin{equation*}
\beta(s)\le
c_3\exp\Big(c_4 s^{-{1}/{\theta}}\Big),\quad
0<s<s_1
\end{equation*}
for some $s_1>0$ small enough. Hence, it is easy to see that when
$\theta>1 $, \eqref{e:iurate} is satisfied, which
shows that $(P_t^{D_f})_{t \ge 0}$ is intrinsically ultracontractive.

\smallskip

(2) By \eqref{th1-4}, it
is easy to verify that for $s>0$ small enough
\begin{equation}\label{th1-4-2}
{f^*}^{-1}\circ \Phi^{-1} (s) \le c_5 s^{-{1}/{\theta}}.
\end{equation}
Combining this with \eqref{p4-2-1}, \eqref{e:Phi}, \eqref{e:Psi} and
the assumption \eqref{e:Expl} with $\gamma\in(0,1]$, we get that \eqref{e:whole} holds \begin{equation*}
\beta(s)\le
c_6\exp\Big(c_7 s^{-{\gamma}/{\theta}}\Big),\quad
0<s<s_1
\end{equation*}
for some $s_1>0$ small enough. Obviously
$\theta>{\gamma}$ implies \eqref{e:iurate}, which
shows that $(P_t^{D_f})_{t \ge 0}$ is intrinsically ultracontractive if
$\theta>{\gamma}$.

\medskip

(3) Note that the inequality \eqref{th1-4-2} is still true. Then, according to \eqref{p4-1-22}, we have the following estimate for the
rate function $\beta(s)$ in \eqref{e:whole} --- there exist constants  $s_1>0$ and $c_i>0$ $(i=8,9)$ such that
\begin{equation*}
\beta(s)\le c_8\exp\bigg[c_9s^{-{1}/{\theta}}\Big(\log
\frac{1}{s}\Big)\bigg],\quad 0<s<s_1,
\end{equation*} where the first inequality in \eqref{th1-4-1} was used.
 Hence, \eqref{e:iurate} holds if $\theta>1$, which
proves that $(P_t^{D_f})_{t \ge 0}$ is intrinsically ultracontractive
when $\theta>1$.

(4) The proof is based on Proposition \ref{p4-1} as that of (3), and we can see that
there exist constants $s_1>0$  and $c_i>0$ $(i=10, \dots,17)$ such that for all $0<s<s_1$
\begin{align*}
\beta(s)
\le& c_{10} (\Phi^{-1}(s)\wedge 1)^{-d} \Psi(\Phi^{-1}(s)\wedge 1)^{-2}\\
&\times
\exp\left[c_{11} \left(\log \frac{1}{\Phi^{-1}(s)}\right)^{1/\theta_2}
\log \left(\frac{  c_{12}\log(1/\Phi^{-1}(s))^{1/\theta_2} }{
\exp\left[-c_{13} \left(\log (1/\Phi^{-1}(s))\right)^{\theta_1/\theta_2}
\right]}\right)\right]
\\
\le &c_{14}\exp\left[c_{15} \left(\log \frac{1}{\Phi^{-1}(s)}\right)^{(1+\theta_1)/\theta_2}\right]\\
\le & c_{16}\exp\left[c_{17} \left(\log \frac{1}{s}\right)^{(1+\theta_1)/\theta_2}\right],
\end{align*}
where in the second inequality we used \eqref{e:Phi} and \eqref{e:Psi}, and the
last inequality follows from \eqref{e:Phi} again.
The rate function $\beta$ above satisfies \eqref{e:iurate}, which yields that $(P_t^{D_f})_{t \ge 0}$ is intrinsically ultracontractive.
\end{proof}

Thanks to the milder assumptions in Corollary \ref{th1},
we can obtain sufficient conditions for intrinsic ultracontractivity of $(P_t^{D_f})_{t \ge 0}$ for a class of jumping processes with variable orders as follows.
\begin{example}\label{ex4-1}
Suppose that a function $\alpha:\R^d\rightarrow (0,2)$ satisfies
$0<\la\le \alpha(x)\le \ua<2$ and
\begin{equation*}
\big|\alpha(x)-\alpha(y)\big|\le
c_1\log^{-1}\left(\frac{2}{|x-y|}\right),\quad |x-y|<1
\end{equation*}
for some positive constant $c_1$.
We consider the non-local symmetric Dirichlet form $(\mathscr{E}, \mathscr{F})$ given by  \eqref{non-local}, and suppose that the jumping kernel $J(x,y)$ satisfies
\begin{align*}
c_2\left(\frac{1}{|x-y|^{d+\alpha(x)}}\wedge \frac{1}{|x-y|^{d+\alpha(y)}}\right) \le & J(x,y) \\
\le& c_3\left(\frac{1}{|x-y|^{d+\alpha(x)}}\vee\frac{1}{|x-y|^{d+\alpha(y)}}\right)
\end{align*}
for all $x,y\in\R^d$ and some positive constants $c_2,c_3$.
Then,
according to \cite[Example 2.3 and Theorem 3.5]{BKK}, there exists a symmetric Hunt process $X=(X_t,t\ge0; \Pp^x, x\in \R^d)$ on $\R^d$ associated with $(\mathscr{E}, \mathscr{F})$, and the process $X$ possesses the transition density function (i.e.,\ heat kernel)
$p(t,\cdot,\cdot):\R^d\times \R^d \rightarrow \R_+$ so that $p(t,x,y)$ is jointly continuous on $(t,x,y)$. Following the argument of \cite[Proposition 8]{CWi14}, one can prove that $p(t,x,y)$ is strictly positive for every $t>0$ and $x,y\in \R^d$.

Let $D_f$ be a horn-shaped region such that $D_f^c$ is $\kappa$-fat. Since $D_f$ is connected,
it is easy to verify that the associated Dirichlet heat kernel $p^{D_f}(t,\cdot,\cdot)$ is bounded, continuous and strictly positive on $D_f\times D_f$ for every $t>0$, see e.g.\ Proposition \ref{p:pDcont} and \cite[Corollary 7 and Remark 8 (2)]{CWi14}.  Therefore, all the assumptions in Subsection \ref{subsection1-1} are fulfilled in this setting.

It is clear that $\J_{\Phi,\ge}$ holds with $\Phi(r)=r^{\underline{\alpha}}$. On the other hand, according to Example \ref{exp:order}, $\ext_{\Psi,\ge}$ holds with $\Psi(r)=r^{\ua+(\ua-\la)d/\la}$. In fact, according to \cite[Theorem 2.1 and Example 2.3]{BKK} and the continuity assumption on $\alpha(x)$, we can obtain that $\ext_{\Psi,\ge}$ holds with $\Psi(r)=r^{\ua}$. Now, according to Corollary \ref{th1} (1), we know that $(P_t^{D_f})_{t \ge 0}$ is intrinsically ultracontractive, if there are constants $\theta>{1}/{\la}$ and $c_0,s_0>0$ such that for all $s\ge s_0$, $$f(s) \le c_0\log^{-\theta }s.$$
\end{example}

\ \

For the remaining part of  this section, we consider the regular Dirichlet form $(\mathscr{E}, \mathscr{F})$ whose jumping kernel $J(x,y)$ given by \eqref{e:jgeneral}. (Note that we do not
assume that Assumptions ${\bf(K_\eta)}$ and ${\bf (SD)}$ hold here.) As mentioned in Subsection \ref{sec1.2}, associated with $(\mathscr{E}, \mathscr{F})$ there is an Hunt process $X$ on $\R^d$, who has a transition density function $p(t,x,y)$ with respect to the Lebesgue measure satisfying that for every $t>0$,
$p(t,\cdot,\cdot): \R^d\times\R^d \to (0,\infty)$ is bounded, continuous and strictly positive.
According to \cite[Lemma 2.5]{CKK3} and \cite[Theorem 2.4 (ii)]{CKK3}, we know that
for every open set $D$
both $\ext_{\Phi,\ge}$ and $\ND_{\Phi,\ge}$ hold $D$ with $r_0=1$.
Furthermore, if $\gamma=0$, then, by
\cite[Theorem 1.2]{CK1} (for the case $\gamma_1=\gamma_2=0$ in  \cite[(1.12)]{CK1}),
$\OD_{\Gamma,\le}$ holds with $t_0=1$ and
\begin{equation}\label{e3-4a}\Gamma(s)=
{s^{-d}\Phi(s)^{-1}};\end{equation}
if $\gamma>0$, then, by \cite[(1.13) and (1.16) in Theorem 1.2 and (1.20) in Theorem
1.4]{CKK1},
$\OD_{\Gamma,\le}$ holds with $t_0=1$ and
\begin{equation}\label{e3-4}\Gamma(s)=\exp\Big(-c(1+s)^{\gamma \wedge 1}
\log^{{(\gamma-1)_+}/{\gamma}}(1+s)\Big)\end{equation} for some constant $c>0$.

Now, we can prove the assertions for the intrinsic ultracontractivity of $(P_t^{D_f})_{t\ge0}$ in Theorems \ref{ex1-1n1} and  Theorem \ref{ex1-1n3}.

\begin{proof}
[Proofs of Theorems $\ref{ex1-1n1}(1)(a)$ and $\ref{ex1-1n1}(2)(a)$]
The
sufficiency
of the intrinsic ultracontractivity of $(P_t^{D_f})_{t\ge0}$ can be easily seen from Corollary
\ref{th1}(1)--(3).
So, one only need to verify the
necessity of the corresponding assertions.

(1) Suppose that $\gamma=0$ and that \eqref{ex1-1-1ag} holds with $\theta\le 1$.
Then, by the definition of horn-shaped region, $$\Phi(\delta_{D_f}(x_n))\ge c_1{\log^{-\theta}n}$$ for $x_n:=(n,\tilde 0)$ and $n\ge1$ large enough. This along with \eqref{e3-4a}
implies that \eqref{p3-1-2} does not hold. Thus, by Proposition \ref{p3-1a},
$(P_t^{D_f})_{t \ge 0}$ is not intrinsically ultracontractive.

(2) Suppose that
$\gamma \in (0, \infty]$  and that  \eqref{ex1-1-2g} holds with $\theta\le \gamma \wedge 1$. Then,
$$\Phi(\delta_{D_f}(x_n))\ge c_1n^{-\theta}$$ for $x_n:=(n,\tilde 0)$ and $n\ge1$ large enough. Combining this with \eqref{e3-4} and \eqref{p3-1-2}, we can see from Proposition \ref{p3-1a} that
$(P_t^{D_f})_{t \ge 0}$ is not intrinsically ultracontractive.
\end{proof}

\begin{proof}[Proof of Theorem $\ref{ex1-1n3}(a)$]
This immediately follows from Corollary \ref{th1} (4).
\end{proof}

\subsection{Unbounded and disconnected open set with locally $\kappa$-fat property}
Recall that we consider the regular Dirichlet form $(\mathscr{E}, \mathscr{F})$ whose jumping kernel $J(x,y)$ given by \eqref{e:jgeneral}.
In this part, let $h:
[0,\infty)
\rightarrow [12,\infty)$ be a measurable function such that
$
\lim_{s \rightarrow \infty}h(s)=\infty.
$
Define
\begin{equation}\label{e4-1}
\begin{split}
D_0&=\bigcup_{n=1}^{\infty}\bigcup_{m=0}^{\lfloor h(n)\rfloor-1}\Big(n+\frac{m}{\lfloor h(n)\rfloor},n+\frac{m+1/2}{\lfloor h(n)\rfloor}\Big),\\
D&=\{x\in \R^d: |x|\in D_0\ \text{or}\ |x|\le 1\}.
\end{split}
\end{equation} The construction of the open set $D$ above is partially inspired by \cite[Example 4]{Kw}.

It is easy to see that for each $n\ge 1$, the set $\overline{D^c\cap\{x\in \R^d: n<|x|<n+1\}}$ is $\Big(\kappa,\frac{1}{4\lfloor h(n)\rfloor}\Big)$-fat at every
point of $\overline{D^c\cap\{x\in \R^d:n<|x|<n+1\}}$, and $\delta_D(x)\le \frac{1}{\lfloor h(n)\rfloor}$ for all $x \in D$ with $n<|x|<n+1$. This along with
\eqref{p3-1-1} yields that for all $n\ge 0$,
\begin{equation}\label{e4-2}
V_D(x) \ge c_1\Phi(\delta_D(x))^{-1}\ge c_2\Phi(h(n)^{-1})^{-1},\quad x\in D\text{ and  }n<|x|<n+1.
\end{equation}
In particular, due to the fact that $\lim_{n \rightarrow \infty}h(n)=\infty$, \eqref{e:com-3} holds, and so $(P_t^D)_{t\ge0}$ is compact. On the other hand, by Propositions \ref{p:pDcont}, \ref{p:pDp} and \ref{p5-1-0} in the Appendix, we know that the associated ground state $\phi_1$ can be chosen to be  bounded, continuous and strictly positive on $D$.

The next result illustrates again that our results for the intrinsic
ultracontractivity of Dirichlet semigroup are optimal in some sense.
\begin{theorem}\label{ex3-1} Consider the regular Dirichlet form $(\mathscr{E}, \mathscr{F})$ whose jumping kernel $J(x,y)$ given by \eqref{e:jgeneral}.
Let $D$ be the open set defined by \eqref{e4-1}. Then, we have the following statements.
\begin{itemize}
\item[(1)]
Suppose that $\gamma=0$ in \eqref{e:Exp} and
\begin{equation}\label{ex3-1-1}
h(s)\simeq \frac{1}{\Phi^{-1}(\log^{-\theta}s)}, \quad s \ge 2
\end{equation} for some $\theta>0$. Then
$(P^{D}_t)_{t\ge0}$ is intrinsically
ultracontractive if and only if
$\theta>1.$

\item[(2)] Suppose  that $\gamma \in (0,\infty]$ in \eqref{e:Exp} and
\begin{equation}\label{ex3-1-2}
h(s)\simeq  \frac{1}{\Phi^{-1}(s^{-\theta})}, \quad s \ge 1
\end{equation} for some $\theta>0$.
 Then  $(P^{D}_t)_{t\ge0}$ is intrinsically
ultracontractive if and only if
$\theta>{\gamma} \wedge 1.$
\end{itemize}
\end{theorem}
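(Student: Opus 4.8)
The plan is to treat this theorem as a concrete instance of the general machinery built in Sections \ref{sec-4} and \ref{sec444}, where $D$ plays the role that a horn-shaped region $D_f$ played before. First I would verify that the structural hypotheses hold: the jumping kernel in \eqref{e:jgeneral} satisfies $\J_{\Phi,\ge}$ (immediate, since $\chi(r)\equiv\chi(0)$ for $r\le1$ and $\kappa$ is bounded below), and by \cite[Lemma 2.5]{CKK3} and \cite[Theorem 2.4(ii)]{CKK3} both $\ext_{\Phi,\ge}$ and $\ND_{\Phi,\ge}$ hold on every open set with $r_0=1$. The key geometric observations are already recorded around \eqref{e4-1}--\eqref{e4-2}: the complement $D^c$ is locally $\kappa$-fat on each annulus $\{n<|x|<n+1\}$ with radius scale $1/(4\lfloor h(n)\rfloor)$, one has $\delta_D(x)\le 1/\lfloor h(n)\rfloor$ there, and $V_D(x)\ge c\,\Phi(h(n)^{-1})^{-1}$ for $x$ in that annulus; since $h(n)\to\infty$, \eqref{e:com-3} holds and $(P^D_t)_{t\ge0}$ is compact with ground state $\phi_1$ bounded, continuous, strictly positive.

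\textbf{Sufficiency.} For each case I would run the sufficient-condition scheme of Theorem \ref{T:IU}, i.e.\ estimate the rate function $\beta(s)$ in \eqref{beta} and check \eqref{e:iurate}. This requires (i) a lower bound for $\phi_1$ in the spirit of Propositions \ref{p:lower1}/\ref{p:lower2}, adapted to the ``ladder'' structure of $D$ rather than the horn; because $D$ is disconnected along the radial direction but each piece is a short interval of length $\asymp 1/\lfloor h(n)\rfloor < 1$, a point $x$ with $|x|\approx n$ is connected to the origin ``in a reasonable way'' in $\asymp n$ steps (just as in the proof of Proposition \ref{p4-1}(1)), so \eqref{e:lower2-1} gives
$$\phi_1(x)\ge c_1\,\Phi(\delta_D(x)\wedge c_1)^{1/2}\,\exp\!\Big(-c_2\,n\log\big(e+ n\,h(n)\big)\Big)$$
for $\gamma\in(0,\infty]$, with the analogous polynomial-type bound using Proposition \ref{p:lower1} when $\gamma=0$ and $J$ has polynomial lower tails; and (ii) feeding this and \eqref{p3-1-1-1} into \eqref{beta} to get, after substituting the explicit $h$ from \eqref{ex3-1-1} or \eqref{ex3-1-2} and using the scaling \eqref{e:Phi}, a bound $\beta(s)\le c\exp(c\,s^{-1/\theta})$ when $\gamma=0$ and $\beta(s)\le c\exp(c\,s^{-\gamma/\theta})$ (up to a log factor when $\gamma>1$) when $\gamma\in(0,\infty]$. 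Then $\int_t^\infty \beta^{-1}(s)/s\,ds<\infty$ exactly when $\theta>1$ in case (1) and when $\theta>\gamma\wedge1$ in case (2); this is essentially the computation already done in Corollary \ref{th1}(1)--(4). One extra point in case (2) when $\gamma>1$: the integrability threshold is governed by $(\log\frac1s)$ versus the exponent, so one must be slightly careful that the borderline $\theta=\gamma\wedge1=1$ indeed fails — but this is the same borderline analysis as in Corollary \ref{th1}(3).

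\textbf{Necessity.} For the converse I would apply Proposition \ref{p3-1a}: since $\ND_{\Phi,\ge}$ and (via \cite[Theorem 1.2]{CK1} / \cite[Theorems 1.2, 1.4]{CKK1}) $\OD_{\Gamma,\le}$ hold with $\Gamma$ given by \eqref{e3-4a} ($\gamma=0$) or \eqref{e3-4} ($\gamma>0$), intrinsic ultracontractivity forces $\Psi(\delta_D(x))|\log\Gamma(|x|)|\to0$ as $|x|\to\infty$, here with $\Psi=\Phi$. Taking a sequence $x_n\in D$ with $|x_n|\in(n,n+1)$, one has $\delta_D(x_n)\le 1/\lfloor h(n)\rfloor$, hence $\Phi(\delta_D(x_n))\le c\,\Phi(h(n)^{-1})\simeq \log^{-\theta}n$ in case (1) and $\simeq n^{-\theta}$ in case (2) by \eqref{ex3-1-1}/\eqref{ex3-1-2}. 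Against $|\log\Gamma(|x_n|)|\simeq\log n$ ($\gamma=0$) or $\simeq n^{\gamma\wedge1}\log^{(\gamma-1)^+/\gamma}n$ ($\gamma>0$), the product fails to vanish precisely when $\theta\le1$, resp.\ $\theta\le\gamma\wedge1$ — contradicting \eqref{p3-1-2}. This is the mirror image of the necessity arguments in the proofs of Theorems \ref{ex1-1n1}(1)(a) and \ref{ex1-1n1}(2)(a). The main obstacle I anticipate is not conceptual but bookkeeping: obtaining a clean matching lower bound for $\phi_1$ on the disconnected set $D$ — one must check that the ``reasonable-way'' chain from Definition \ref{d:c-reason} can be built across the gaps between the radial slabs with uniformly controlled step sizes $a_1,a_2\in(0,1)$, and that the number of steps is $\asymp|x|$ with the right $\delta_D(x^{(i)})\asymp h(n)^{-1}$ along the way, so that the exponential rate in \eqref{e:lower2-1} exactly cancels $|\log\Gamma|$ at the threshold.
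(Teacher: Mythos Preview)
Your overall plan matches the paper's, but there are two genuine gaps.

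\textbf{Sufficiency when $\gamma\in(0,1]$.} You propose to use the chain construction (Proposition \ref{p:lower2}) for every $\gamma\in(0,\infty]$, and then claim this yields $\beta(s)\le c\exp(c\,s^{-\gamma/\theta})$. It does not. The bound \eqref{e:lower2-1} does not see $\gamma$ at all: with $n\asymp|x|$ steps and $r_i\asymp h(n)^{-1}$, it gives at best $\phi_1(x)\ge c\,\Phi(\delta_D(x))\exp(-c|x|\log|x|)$, which after substitution into \eqref{beta} yields $\beta(s)\le c\exp(c\,s^{-1/\theta}|\log s|)$. This proves intrinsic ultracontractivity only for $\theta>1$, which is the wrong threshold when $\gamma<1$. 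The paper splits the argument: for $\gamma>1$ it uses the chain construction exactly as you describe, but for $\gamma\in(0,1]$ it applies Proposition \ref{p:lower1} directly, using that the jumping kernel has the sub-exponential lower tail $J(y,z)\ge c\,e^{-c|y-z|^\gamma}$ for $|y-z|$ large. This gives $\phi_1(x)\ge c\,\Phi(\delta_D(x))\exp(-c|x|^\gamma)$ (see \eqref{ex3-1-3}), hence $\beta(s)\le c\exp(c\,s^{-\gamma/\theta})$ and the sharp threshold $\theta>\gamma$. Without this case split your sufficiency argument fails for $\gamma\in(0,1)$.

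\textbf{Necessity.} Your inequality is pointing the wrong way. To contradict \eqref{p3-1-2} you need a \emph{lower} bound on $\Phi(\delta_D(x_n))$, not the upper bound $\delta_D(x_n)\le 1/\lfloor h(n)\rfloor$ you quote. The point is that each slab in $D_0$ has width $\tfrac{1}{2\lfloor h(n)\rfloor}$, so placing $x_n$ at the center of one of them gives $\delta_D(x_n)\ge \tfrac{1}{4\lfloor h(n)\rfloor}$; then $\Phi(\delta_D(x_n))\ge c\,\log^{-\theta}n$ (case (1)) or $\ge c\,n^{-\theta}$ (case (2)), and multiplying by $|\log\Gamma(|x_n|)|$ shows the product is bounded away from zero exactly when $\theta\le1$, resp.\ $\theta\le\gamma\wedge1$. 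This is what the paper does; with your stated upper bound the argument collapses.
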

\begin{proof}
(1) Suppose that $\gamma=0$ and \eqref{ex3-1-1} holds.
Since
$\lim_{x \in D \text{ and } |x|\rightarrow \infty}\delta_D(x)=0$ and $\ext_{\Phi,\ge}$ holds, it follows from \eqref{e:lower1-1} that there is a constant $c_1>0$ such that for all $x\in D$
\begin{equation*}
\phi_1(x)\ge \frac{c_1\Phi(\delta_D(x))}{|x|^d\Phi(|x|)}.
\end{equation*}
Applying this estimate and \eqref{e4-2} into \eqref{beta}, we get that
the intrinsic super Poincar\'e inequality \eqref{e:whole} holds with the rate function $\beta(s)$ as follows
\begin{equation*}
\begin{split}
\beta(s)&\le c_2\inf\left\{
\frac{ (\Phi^{-1}(s)\wedge r)^{-d}}{\inf_{x\in D_{R+1, r/2} }\phi_1^2(x)}:
 \inf_{x\in
D\setminus D_{R,r}}V_D(x)\ge 1/s
\textrm{ and  }r \in (0,c_3)  \right\}\\
&\le c_4\inf\left\{
\frac{ (\Phi^{-1}(s)\wedge r)^{-d}R^{2d}\Phi(R)^2}{\Phi^2(r)}:
 c_5\Phi\big(r\vee h(R)^{-1}\big)\le s\textrm{ and  }r \in (0,c_3)  \right\}.
\end{split}
\end{equation*}
In the infimum above taking $r=c_6\Phi^{-1}(s)$ and $R=\exp(c_7s^{-{1}/{\theta}})$ for $s>0$ small enough and with some suitable positive constants $c_6, c_7$ (thanks to \eqref{e:Phi} and \eqref{ex3-1-1}),  we arrive at
\begin{equation*}
\begin{split}
\beta(s)\le c_8\exp(c_8s^{-{1}/{\theta}}),\quad 0<s\le s_0
\end{split}
\end{equation*}
for some constant $s_0>0$. This implies that when $\theta>1$, the rate function above satisfies \eqref{e:iurate}, hence $(P_t^D)_{t \ge 0}$ is intrinsically ultracontractive.

If \eqref{ex3-1-1} holds with $\theta\le 1$, then, by the definition of $D$, we can find a sequence
$\{x_n\}_{n=2}^{\infty}\subset D$
 such that
$n<|x_n|< n+1$ and $$\delta_D(x_n)\ge
\frac{1}{4\lfloor h(n)\rfloor}
\ge c_9\Phi^{-1}\big(\frac{1}{\log^{\theta}n}\big).$$
This together with
\eqref{e:Phi}, \eqref{e3-4a}  and the fact that $\theta \le 1$ shows that \eqref{p3-1-2} does not hold. Thus, by Proposition \ref{p3-1a}, we know $(P_t^D)_{t \ge 0}$ is not intrinsically
ultracontractive.

(2) We first consider the case $\gamma>1$. Assume that \eqref{ex3-1-2} holds.
For every $x \in D$ with $|x| \ge 4 $, let $n=\lfloor 2|x|   \rfloor$,
$x^{(n)}=x$, $x^{(0)}=0$, $x^{(1)}=\frac{1}{2}\cdot\frac{x}{|x|}$ and
$$x^{(i)}:=\Bigg(\Big\lfloor \frac{i}{2}\Big\rfloor
+\frac{\Big\lfloor{\frac{(1+2(i-2\lfloor \frac{i}{2}\rfloor)}{4}}h\big(\lfloor \frac{i}{2}\rfloor\big)\Big\rfloor+\frac{1}{4}}{\lfloor h\big(\lfloor \frac{i}{2}\rfloor\big)\rfloor}\Bigg)\cdot
\frac{x}{|x|} \in D,\quad 2\le i \le  n-1.$$
Since $$
\frac{2}{3} \le \frac{\lfloor \frac{3}{4} a \rfloor}{\lfloor  a \rfloor} \le \frac{9}{11},  \quad
\frac{1}{6} \le \frac{\lfloor \frac{1}{4} a \rfloor}{\lfloor  a \rfloor} \le \frac{3}{11}  \quad \text{for all }a \ge 12,
$$
 using the fact $h\ge 12$, we see that $\frac{1}{5}\le |x^{(i-1)}-x^{(i)}|\le \frac{7}{8}$
for all $1\le i \le n$ and
$\delta_D(x^{(i)})\ge \frac{1}{4\lfloor h(\lfloor \frac{i}{2}\rfloor)\rfloor}$ for all $0\le i \le n-1$. Then, taking such
$\{x^{(i)}\}_{0\le i\le n}$ into \eqref{e:lower2-1}, we obtain that for all $x\in D$ with $|x|$ large enough
\begin{equation*}
\begin{split}
\phi_1(x)&\ge c_{10}\Phi\big(\delta_D(x)\big)\exp\Big(-c_{11}\Big(n\log n+\sum_{i=0}^{n-1} \log
\Big[ h\Big(\Big\lfloor \frac{i}{2}\Big\rfloor\Big)\Big]\Big)\Big)\\
&\ge   c_{12}\Phi\big(\delta_D(x)\big)\exp\Big(-c_{13}|x|\log |x|\Big),
\end{split}
\end{equation*}
where in the last inequality we used the following fact deduced from \eqref{ex3-1-2} and \eqref{e:Phi} that
$$\log \Big[h\big(\big\lfloor \frac{i}{2}\big\rfloor\big)\Big]\le
c_{14}\log \frac{1}{\Phi^{-1}((i+1)^{-\theta})}\le c_{15}\log (e+i),\quad 0\le i \le n.$$
Furthermore, according to the lower bound estimate for $\phi_1$ above and \eqref{e4-2}, we know
the intrinsic super Poincar\'e inequality \eqref{e:whole} holds with
\begin{equation*}
\begin{split}
\beta(s)&\le c_{16}\inf\left\{
\frac{ (\Phi^{-1}(s)\wedge r)^{-d}}{\inf_{x\in D_{R+1, r/2} }\phi_1^2(x)}:
 \inf_{x\in
D\setminus D_{R,r}}V_D(x) \ge 1/s\textrm{ and  }r \in (0,r_0/2)  \right\}\\
&\le c_{16}\inf\left\{
\frac{ (\Phi^{-1}(s)\wedge r)^{-d}e^{-c_{17}R\log R}}{\Phi^2(r)}\!:
 c_{18}\Phi\big(r\vee h(R)^{-1}\big)\!\le s\textrm{ and  }r \in (0,r_0/2)  \right\}.
\end{split}
\end{equation*}
Therefore,
in the infimum above choosing $r=c_{19}\Phi^{-1}(s)$ and $R=c_{20}s^{-{1}/{\theta}}$ for $s>0$ small enough and with some suitable positive constants $c_{19}, c_{20}>0$ (thanks to \eqref{e:Phi} and \eqref{ex3-1-2}),
we will arrive at
\begin{equation*}
\begin{split}
\beta(s)\le c_{21}\exp\big(c_{22}s^{-{1}/{\theta}}|\log s|\big),\quad 0<s\le s_0
\end{split}
\end{equation*}
for some $s_0\in (0,1)$. In particular, when $\theta>1$, \eqref{e:iurate} holds, and so $(P_t^D)_{t \ge 0}$ is intrinsically ultracontractive.

Assume \eqref{ex3-1-2} holds with $\theta\le 1$. Then, we can find a sequence $\{x_n\}_{n=1}^{\infty}\subset D$ such that
$n<|x_n|<n+1$ and $$\delta_D(x_n)\ge\frac{1}{4\lfloor h(n)\rfloor}\ge c_{23}\Phi^{-1}\big(n^{-\theta}\big).$$
This combined with \eqref{e3-4}
shows that \eqref{p3-1-2} does not hold, thanks to $\theta\le 1$. Hence, according to Proposition \ref{p3-1a}, we know that $(P_t^D)_{t \ge 0}$ is not intrinsically
ultracontractive.

Next we consider the situation that $\gamma\le 1$. In this case, we can directly apply \eqref{e:lower1-1} to derive that
\begin{equation}\label{ex3-1-3}
\begin{split}
& \phi_1(x)\ge c_{24}\Phi(\delta_D(x))\exp\Big(-c_{25}|x|^{\gamma}\Big)
\end{split}
\end{equation}
holds for all $x \in D$ with $|x|$ large enough.

Using \eqref{e4-2}, \eqref{ex3-1-2} and \eqref{ex3-1-3}, and following the same argument as above, we can obtain that the intrinsic super Poincar\'e inequality \eqref{e:whole} holds with
\begin{equation*}
\beta(s)\le c_{26}\exp\big(c_{27}s^{-{\gamma}/{\theta}}\big),\quad 0<s\le s_0.
\end{equation*}
If $\theta>\gamma$, then \eqref{e:iurate} holds, and so $(P_t^D)_{t \ge 0}$ is intrinsically ultracontractive.

If \eqref{ex3-1-2} holds with $\theta\le \gamma$, then, as the same procedure as above, we can show that
\begin{equation*}
\limsup_{x\in D\text{ and }|x| \rightarrow \infty}{\Phi(\delta_D(x))}{|x|^{\gamma}}>0.
\end{equation*}
So \eqref{p3-1-2} does not hold, and, by Proposition \ref{p3-1a}, $(P_t^D)_{t \ge 0}$ is not intrinsically
ultracontractive.
\end{proof}

\begin{remark}
We close this section with some comments on our approaches on  the compactness and the intrinsic ultracontractivity of $(P_t^D)_{t\ge0}$.
Throughout the arguments up to this section,  we make use of abstract assumptions like $\J_{\Phi,\ge}$, $\ext_{\Psi,\ge}$, $\ND_{\Psi,\ge}$ and $\OD_{\Gamma,\le}$.
Such assumptions have been used in \cite{CKW} to study heat kernel estimates for non-local Dirichlet forms on
general metric measure spaces. In fact,
the arguments above do not heavily depend on the characteristics of  Euclidean space.
We believe that our methods above can be used
to study the related topics  for non-local Dirichlet forms on general metric measure spaces.
\end{remark}

\section{Two-sided estimates for ground state on horn-shaped regions}\label{ss:tsehs}
This section, as a continuation of Subsection \ref{sec4444}, is devoted to establishing two-sided estimates for ground state on horn-shaped regions.
We concentrate on
the regular Dirichlet form $(\mathscr{E}, \mathscr{F})$ with jumping kernel $J(x,y)$ given by \eqref{e:jgeneral}.
 Let $D_f$ be a horn-shaped region with the reference function $f$.
   The associated Dirichlet semigroup is denote by $(P_t^{D_f})_{t\ge0}$.

In order to obtain explicit estimates for $\phi_1$,
we need additionally assumptions on the coefficient function $\kappa(x,y)$ and the scaling function $\Phi(r)$ of jumping kernel $J(x,y)$ and the reference function $f$ of horn-shaped region $D_f$. Recall that a function $g \in C^1(0,\infty)$ is a $C^{1,1}$ function, if there is a constant $c_g>0$ satisfying $\|g'\|_\infty\le c_g$ and $|g'(s)-g'(t)|\le c_g|s-t|$ for all $s,t>0$.
Throughout this section, the jumping kernel $J(x,y)$ is given by \eqref{e:jgeneral} and we further assume
\begin{itemize}\it
\item[(1)] Assumptions ${\bf(K_\eta)}$ and ${\bf (SD)}$ hold;

\item[(2)] The reference function $f$ of $D_f$ is a $C^{1,1}$ function;

\item[(3)] The semigroup $(P_t^{D_f})_{t\ge0}$ is intrinsically
ultracontractive.
\end{itemize}

Note that, $f\in C^{1,1}$ implies that
$D_f^c$ is a $\kappa$-fat set. Thus the semigroup $(P_t^{D_f})_{t\ge0}$ is compact.
Furthermore, by Propositions \ref{p:pDcont},
 \ref{p:pDp} and \ref{p5-1-0},
the ground state $\phi_1$ is bounded, continuous and strictly positive on $D_f$.
We also emphasize here that we do not assume neither $f$ is non-decreasing nor $\limsup_{r\to\infty} f'(r)=0.$ Such assumptions $f$ were used in \cite[(A1) and (A2) in p.\ 382]{LPZ} to study lower bound estimates of ground state of killed Brownian motion on horn-shaped region.

The following is the main result in this section. Recall that $f^*(r)=\sup_{s\ge r} f(s)$ and
$f_*(r)=\inf_{1\le s\le r} f(s)$.
\begin{theorem}\label{ex1-1h}
Under the setting and all the assumptions above, we have the following statements.
\begin{itemize}
\item[(1)]
If  $\gamma=0$ in \eqref{e:Exp}, then there are constants $c_1, c_2>0$ such that for all $x\in D_f$ with $|x|$ large enough,
\begin{equation*}\label{e:stah}\begin{split}
c_1\Phi(\delta_{D_f}(x))^{1/2}&\Phi(f_*(x_1+1))^{1/2}|x|^{-d}\Phi(|x|)^{-1}\\
&  \le \phi_1(x) \le c_2\Phi(\delta_{D_f}(x))^{1/2}\Phi(f^*(x_1-2))^{1/2}|x|^{-d}\Phi(|x|)^{-1}.\end{split}\end{equation*}

\item[(2)]
If $\gamma \in (0,1]$ in \eqref{e:Exp}, then there are constants $c_i>0$ $(i=1,\ldots,4)$ such that
for all $x\in D_f$ with $|x|$ large enough,
\begin{equation*}\label{ex1-1-2ah}\begin{split}
c_1\Phi(\delta_{D_f}(x))^{1/2}&\Phi(f_*(x_1+1))^{1/2}\exp\big(-c_2|x|^{\gamma}\big)\\
&
\le \phi_1(x) \le
c_3\Phi(\delta_{D_f}(x))^{1/2}\Phi(f^*(x_1-2))^{1/2}
\exp\big(-c_4|x|^{\gamma}\big).\end{split}\end{equation*}

\item[(3)]If $\gamma \in (1,\infty]$ in \eqref{e:Exp}, then,
for any increasing function $g(r)$ satisfying that
$c_0 \log^{1/\gamma} r  \le g(r) \le r/4$
for all $r>0$ large enough and some $c_0>0$,
there are constants $c_i>0$ $(i=1,\ldots,4)$ such that
for all $x\in D_f$ with $|x|$ large enough,
\begin{equation}\label{ex1-1-2ah111}\begin{split}
\phi_1(x)&\le
c_3\Phi(\delta_{D_f}(x))^{1/2}\Phi(f^*(x_1-2))^{1/2}\\
&\quad \times
\exp\Bigg[-c_4 \frac{|x|}{g(|x|)}\left(g(|x|)^\gamma {\wedge} \log \frac{1}{ f^*(|x|/4)}\right)\Bigg]\end{split}\end{equation} and \begin{equation}\label{ex1-1-2ah112}\begin{split}
\phi_1(x)\ge &c_1\Phi(\delta_{D_f}(x))^{1/2}\Phi(f_*(x_1+1))^{1/2}\\
&\times \exp\Bigg\{-c_2\bigg[|x|^{\gamma} \wedge \left( \frac{|x|}{g(|x|)}\left(g(|x|)^\gamma \vee \log \frac{1}{ f_*(2|x|)}\right)\bigg)\right]\Bigg\},\end{split}\end{equation}
\end{itemize}
\end{theorem}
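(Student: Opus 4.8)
The plan is to combine the eigen-equation $\phi_1(x)=e^{\lambda_1 t}P^{D_f}_t\phi_1(x)=e^{\lambda_1 t}\int_{D_f}p^{D_f}(t,x,y)\phi_1(y)\,dy$ with sharp two-sided heat-kernel information, and to express the answer as a product of a \emph{boundary (survival) factor}, comparable to $\Phi(\delta_{D_f}(x))^{1/2}\Phi(f(x_1))^{1/2}$, and a \emph{return factor} recording the passage of $X^{D_f}$ from the core of the horn at longitudinal level $x_1$ down to a fixed relatively compact set $D_0\subset D_f$ near the origin. For the lower bound I would fix $t=t_0$ and use $\phi_1(x)\ge c\,P^{D_f}_{t_0}\I_{D_0}(x)$ (Lemma~\ref{L:lower1}), reducing matters to a lower estimate of $\Pp^x(X^{D_f}\text{ visits }D_0\text{ before }t_0)$. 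For the upper bound I would use $\phi_1\le\|\phi_1\|_\infty$, so that $\phi_1(x)\le e^{\lambda_1 t_0}\|\phi_1\|_\infty\,\Pp^x(\tau_{D_f}>t_0)$, and then observe that, since $\delta_{D_f}\to0$ at infinity, $X^{D_f}$ cannot survive time $t_0$ while staying in the thin portion of the horn within a bounded window of $x$ (there the exit time of the local tube is $\asymp\Phi(f(x_1))\to0$, so that survival probability is super-exponentially small in $1/\Phi(f(x_1))$); hence $\Pp^x(\tau_{D_f}>t_0)$ is comparable, up to a constant, to $\Pp^x(X^{D_f}\text{ reaches a fixed relatively compact }D_0'\subset D_f\text{ before }t_0)$. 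Both bounds therefore reduce to two-sided control of the probability that $X^{D_f}$ reaches a fixed compact subset of $D_f$ within a bounded time.

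The first ingredient is the \emph{local} $C^{1,1}$ geometry of $D_f$. Because $f\in C^{1,1}$ with $f(u)\to0$, rescaling the horn near $x$ (for $|x|$ large) by the width $f(x_1)$ turns a bounded neighbourhood into a $C^{1,1}$ open set whose characteristics are controlled uniformly in $x$: the longitudinal boundary rescales to a $C^{1,1}$ graph of slope $\lesssim\|f'\|_\infty$ with vanishing second derivative, and the cross-section to a sphere of radius $\asymp1$. Combining this scaling with Assumptions ${\bf(K_\eta)}$, ${\bf(SD)}$ and the interior and boundary heat-kernel estimates of \cite{CK1,CKK1,CKK3} recalled in Section~\ref{sec444}, one obtains, for $t\le1$ and $|x|,|y|$ large, two-sided estimates for $p^{D_f}(t,x,y)$ of the factored form (survival at $x$)$\,\times\,$(survival at $y$)$\,\times\,$(free transition density of $X$), where the survival factor at $x$ is $\asymp\bigl(1\wedge\Phi(\delta_{D_f}(x))/(t\wedge\Phi(f(x_1)))\bigr)^{1/2}$; when $\gamma\le1$ the free factor at large separation is $\asymp t\,J(x,y)$, while for $\gamma>1$ the off-diagonal behaviour is governed by the more intricate $\OD_{\Gamma,\le}$ regime of \eqref{e3-4}. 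Together with the matching survival and exit-time estimates on balls $B(x_0,r)\subset D_f$, this gives the boundary factor: evaluating the eigen-equation at the intrinsic time scale $t\asymp\Phi(f(x_1))$, the survival factor at $x$ becomes $(\Phi(\delta_{D_f}(x))/\Phi(f(x_1)))^{1/2}$ and the free factor contributes a further $\Phi(f(x_1))$, producing the product $\Phi(\delta_{D_f}(x))^{1/2}\Phi(f(x_1))^{1/2}$; equivalently, and keeping the landing region clear of $\partial D_f$, the same is obtained from the Lévy system \eqref{e:levy}: the core ball $B(w_x,cf(x_1))\subset D_f$ with $w_x:=(x_1,\tilde 0)$ is reached with probability $\asymp(\Phi(\delta_{D_f}(x))/\Phi(f(x_1)))^{1/2}$, and from it the probability of jumping into $D_0$ during the exit time $\asymp\Phi(f(x_1))$ is $\asymp\Phi(f(x_1))\,J(w_x,0)\,|D_0|$. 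For $\gamma\le1$ one has $p(1,x,0)\asymp J(x,0)\asymp|x|^{-d}\Phi(|x|)^{-1}\chi(|x|)^{-1}$ by \cite{CK1,CKK1}, equal to $|x|^{-d}\Phi(|x|)^{-1}$ when $\gamma=0$ and $\asymp\exp(-c|x|^{\gamma})$ when $\gamma\in(0,1]$; combined with the boundary factor this gives parts (1) and (2), the replacement of $f(x_1)$ by $f_*(x_1+1)$ (lower bound) and by $f^*(x_1-2)$ (upper bound) absorbing the possible oscillation of $f$ over a bounded longitudinal window.

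For part (3), $\gamma\in(1,\infty]$, a single jump of length $\asymp|x|$ is penalized by $\chi(|x|)\asymp\exp(c|x|^{\gamma})$, so the return factor is governed by a \emph{chain of jumps}. For the lower bound \eqref{ex1-1-2ah112} I would argue in the spirit of Proposition~\ref{p:lower2}, but with macroscopic links: a chain of $n\asymp|x|/g(|x|)$ points spaced $\asymp g(|x|)$ running along the core of the horn, each link realized through the Lévy system \eqref{e:levy} during the exit time of a ball of radius $\asymp$ (local width) --- so that each of the $\asymp|x|/g$ steps costs $g^{\gamma}$ from the jumping kernel and $\log(1/f_*(2|x|))$ from squeezing the landing region into the thin cross-section, and the scaling \eqref{e:Phi} absorbs the $\Phi(\cdot)$ and volume factors into the exponential --- which leaves the factor $\exp\{-c(|x|/g(|x|))(g(|x|)^{\gamma}\vee\log(1/f_*(2|x|)))\}$, multiplied by the boundary factor of the previous paragraph; the competing single-jump bound $\Phi(\delta_{D_f}(x))\,J(x,0)\asymp\exp(-c|x|^{\gamma})$ from Proposition~\ref{p:lower1} supplies the term $|x|^{\gamma}$ in the minimum (and in the finite-range case $\gamma=\infty$ only the microscopic chain, $g\asymp1$, survives, recovering exactly Proposition~\ref{p:lower2}). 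For the upper bound \eqref{ex1-1-2ah111} I would combine the localized heat-kernel upper bound with $\OD_{\Gamma,\le}$ \eqref{e3-4} and an iterated application of the Lévy system: a path of $X^{D_f}$ from $x$ to $D_0'$ in bounded time either contains one jump of length $\gtrsim|x|$ (cost $\gtrsim|x|^{\gamma}$ in the exponent) or consists of $\gtrsim|x|/g(|x|)$ jumps of length $\lesssim g(|x|)$ while remaining in a horn of width $\lesssim f^*(|x|/4)$ throughout the relevant window, and counting these alternatives reproduces $\exp\{-c(|x|/g(|x|))(g(|x|)^{\gamma}\wedge\log(1/f^*(|x|/4)))\}$; the boundary factors are supplied exactly as before. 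Finally, Theorems~\ref{ex1-1n1}(1)(b), \ref{ex1-1n1}(2)(b) and \ref{ex1-1n3}(b) follow by substituting the explicit reference functions, for which $f_*\asymp f^*\asymp f$ near infinity, and then minimizing over $g$ as indicated after \eqref{ex5-1-5}.

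The main obstacle is twofold. First, the heat-kernel estimates above are genuinely non-uniform: since the curvature radius $f(x_1)$ of the cross-section tends to $0$, the relevant scale at $x$ is position-dependent, so one has to set up the localized Dirichlet heat-kernel and exit-time estimates at the variable scale $f(x_1)$ (by scaling the $C^{1,1}$ results of \cite{CK1,CKK1,CKK3} under ${\bf(K_\eta)}$ and ${\bf(SD)}$) and then carry out the bookkeeping that cleanly separates the survival factor from the return factor --- in particular, in the upper bound one must control both the bulk survival in the thin horn and the contribution of $y$ close to $x$, over which $f$ (hence $\phi_1$) may vary by an arbitrary ratio since $f\to0$; this is exactly what forces the $f_*$ / $f^*$ brackets rather than $f(x_1)$ itself. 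Second, in the regime $\gamma>1$ the constructive lower bound (one explicit chain through the core) and the upper bound (counting all admissible paths) must match exponent-for-exponent throughout the admissible range $g(|x|)\in[c_0\log^{1/\gamma}|x|,\,|x|/4]$, including the borderline $\gamma=\infty$; reconciling the $\vee$ in \eqref{ex1-1-2ah112} with the $\wedge$ in \eqref{ex1-1-2ah111} is the delicate combinatorial step, and it produces exactly the slack that is then optimized away --- up to constants inside the exponent, which is why one gets ``$\asymp$'' rather than ``$\simeq$'' when $\gamma>0$.
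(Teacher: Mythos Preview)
The overall architecture—separating a boundary (survival) factor from a return factor, using Lemma~\ref{L:lower1} for the lower bound, and a chain-of-jumps argument for $\gamma>1$—matches the paper, and your lower-bound side is essentially the paper's Proposition~\ref{p5-2} (including the macroscopic chain and the scaling argument behind Lemma~\ref{l5-4}).

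The upper bound, however, has two genuine gaps. First, you start from $\phi_1(x)\le C\,\Pp^x(\tau_{D_f}>t_0)$ and then assert that this survival probability is comparable to the probability of reaching a fixed compact $D_0'$. The paper does not argue this way: it uses intrinsic ultracontractivity directly to obtain $\phi_1(x)\le c\,p^{D_f}(1,x,y_0)$ for a \emph{fixed} point $y_0$, and then estimates this two-point heat kernel. Your comparability claim is not established: the contribution from paths that survive time $t_0$ while wandering in the thin horn without reaching $D_0'$ is bounded by a constant depending only on $D_0'$, whereas the return probability tends to $0$ as $|x|\to\infty$, so the decomposition you propose cannot produce a sharp upper bound without further work.

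Second, you invoke ``two-sided estimates for $p^{D_f}(t,x,y)$ of the factored form'' obtained by rescaling the $C^{1,1}$ heat-kernel results. Such estimates are not available on $D_f$: the horn has $C^{1,1}$ radius $\asymp f(x_1)\to0$, and after scaling by $f(x_1)$ the longitudinal direction becomes unbounded, so the rescaled set is not a uniform $C^{1,1}$ domain. The paper circumvents this in Lemma~\ref{l5-1} by embedding $B(z_x,1)\cap D_f$ into an annulus $B(u_x,2)\setminus B(u_x,\kappa)$—a $C^{1,1}$ domain with characteristics \emph{independent of $x$}—and applying the Green-function bound of \cite{GKK}; integrating over the thin cross-section then extracts the factor $\Phi(\delta_{D_f}(x))^{1/2}\Phi(f^*(x_1-2))^{1/2}$. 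This annulus trick, combined with the exit-probability estimate of Lemma~\ref{l5-3} and the decomposition of Lemma~\ref{l5-2}, is the engine of the upper bound for $\gamma\le1$ and is missing from your sketch. For $\gamma>1$ your path-counting intuition is correct, but the paper's implementation (Proposition~\ref{p5-1}(3)) is a recursive inequality for $h_k=\sup_{s\le1,\,z\in V_k}p^{D_f}(s,z,y_0)$ over shells $V_k$ at distance $kg(|x|)$ from $x$, iterated $N\asymp|x|/g(|x|)$ times and summed combinatorially; that bookkeeping is where the $\wedge$ in \eqref{ex1-1-2ah111} actually comes from.
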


Theorem \ref{ex1-1h} follows from Proposition \ref{p5-1} and Proposition \ref{p5-2} below, which are concerned with upper bound estimates and lower bound estimates of $\phi_1$, respectively. As an application of Theorem \ref{ex1-1h}, here we present the second part of proofs of Theorem \ref{ex1-1n1} and Theorem \ref{ex1-1n3} about two-sided estimates for $\phi_1$.

\begin{proof}
[Proofs of Theorem $\ref{ex1-1n1}(1)(b)$ and $\ref{ex1-1n1}(2)(b)$]
Theorem \ref{ex1-1n1}(1)$(b)$ and (2)$(b)$ with $\gamma\in (0,1]$ follow from  Theorem \ref{ex1-1h} (1) and (2), respectively. Concerning Theorem \ref{ex1-1n1} (2)$(b)$ with $\gamma\in (1,\infty]$, we take $g(r)=4\log^{{1}/{\gamma}}(e+r)$ in \eqref{ex1-1-2ah111} and \eqref{ex1-1-2ah112}.
Then, we can get the desired assertion.
Note that, in two-sided estimates for $\phi_1$ in the statement of Theorem \ref{ex1-1n1}(2)$(b)$, the factor $\Phi(f(|x|))^{1/{2}}$ can be absorbed into the exponential term, since for $x\in D_f$ such that $|x|$ large enough
\begin{align*}
c_1
\exp\Big(-c_2|x|\log^{{(\gamma-1)}/{\gamma}}|x|\Big)&\le \Phi(f(|x|))^{1/{2}}\exp\Big(-c_0|x|\log^{{(\gamma-1)}/{\gamma}}|x|\Big)\\
&\le c_3\exp\Big(-c_4|x|\log^{{(\gamma-1)}/{\gamma}}|x|\Big).
\end{align*}
 \end{proof}

\begin{proof}[Proof of Theorem $\ref{ex1-1n3}(b)$]
When $\theta\ge \gamma$,
taking $g(r)={r}/{4}$ in \eqref{ex1-1-2ah111} and \eqref{ex1-1-2ah112}, we can get \eqref{ex5-1-5}.
When $\theta \in (0,\gamma)$, we choose $g(r)=4r^{{\theta}/{\gamma}}$ in \eqref{ex1-1-2ah111} and \eqref{ex1-1-2ah112} to obtain \eqref{ex5-1-5}.
\end{proof}

\subsection{Upper bound estimates for ground state}

Let $U\subset \R^d$ be an open set and let $x\in \partial U$. We say that $U$ is $C^{1,1}$ near $x$ if there exist
a localization radius $r_x>0$, a constant $\lambda_x>0$,
a $C^{1,1}$-function
$\varphi_x:\R^{d-1}\to \R$ satisfying $\varphi_x(0)=0$, $\nabla \varphi_x(0)=(0,\dots, 0)$,
$\| \nabla \varphi_x \|_{\infty}\le \lambda_x$, $|\nabla \varphi_x(y)-\nabla \varphi_x(z)|\le \lambda_x |y-z|$, and an orthonormal coordinate system $CS_x$ with its origin at $x$ such that
$$
B(x,r_x)\cap U=\{y=(y_1, \tilde{y})\in B(0,r) \textrm{ in } CS_x:\, y_1>\varphi_x(\tilde{y})\}\, ,
$$
where $\tilde{y}:= (y_2, \dots, y_{d})$.
The pair $(r_x,\lambda_x)$ is called the $C^{1, 1}$ characteristics of $U$ at $x$.
An open set $U\subset \R^d$ is said to be a (uniform) $C^{1,1}$ open set with characteristics $(R,\Lambda)$,
if it is $C^{1,1}$ near every boundary point of $\partial U$ with the same characteristics $(R,\Lambda)$.

In order to obtain  upper bound estimates for ground state $\phi_1$, we first present the following upper bound estimate for the expectation of exit time. In what follows,
we denote by $D=D_f$ for simplicity.
\begin{lemma}\label{l5-1}
There exists a constant $c_1>0$ such that
for every
$x \in D$ with $|x|$ large enough,
\begin{equation}\label{l5-1-1}
\Ee_x\big[\tau_{B(z_x,1)\cap D}\big]\le c_1
{\Phi(\delta_D(x))}^{1/2}{\Phi(f^*(x_1-2))}^{1/2},
\end{equation}where $z_x \in \partial D$ such that
$|x-z_x|=\delta_D(x)$.
\end{lemma}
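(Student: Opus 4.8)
The plan is to bound $\Ee_x[\tau_{B(z_x,1)\cap D}]$ by splitting the region $B(z_x,1)\cap D$ into a ``thin slab'' near the boundary (where $\delta_D$ is small) and controlling the escape time via the exit-time estimate $\ext_{\Phi,\ge}$ together with a second, coarser mechanism that exploits the fact that near $z_x$ the horn-shaped region $D$ is trapped inside a half-space-like set whose width is comparable to $f^*(x_1-2)$. The upshot should be that the process escapes $B(z_x,1)\cap D$ either by crossing a distance of order $\delta_D(x)$ transversally (contributing time $\simeq\Phi(\delta_D(x))$ times a geometric number of trials) or by realizing a single large jump out of the slab. Geometric averaging of these two scales produces the $\Phi(\delta_D(x))^{1/2}\Phi(f^*(x_1-2))^{1/2}$ bound.

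Concretely, first I would set $\rho:=\delta_D(x)$ and $w:=f^*(x_1-2)$, and observe that for $|x|$ large $\rho\le w\le$ (small constant), so that $B(z_x,1)\cap D$ is contained in a cylindrical region $S$ of the form $\{y: |\widetilde{y-z_x}|<c\,w\}\cap B(z_x,1)$ in a suitable coordinate system aligned with the axis of the horn (this uses $f\in C^{1,1}$ and $\lim_{u\to\infty}f(u)=0$, so locally near $z_x$ the horn is essentially a thin flat slab of half-width $\simeq w$). Next, for the process started at $x$, I would use $\ext_{\Phi,\ge}$ on the ball $B(x,\rho)\subset D$ to get $\Pp^x(\tau_{B(x,\rho)}>C_0\Phi(\rho))\ge C_1$; combined with the L\'evy system \eqref{e:levy}, the probability that the process, before a fixed time $C_0\Phi(\rho)$, makes a jump landing outside the slab $S$ (equivalently outside $B(z_x,1)\cap D$) is at least $c\,\Phi(\rho)\,V_{S}(\cdot)\gtrsim c\,\Phi(\rho)/\Phi(w)$, using the lower jumping kernel bound (coming from $J$ with scaling $\Phi$) over the complement of a slab of width $w$, analogously to Proposition \ref{p3-1}. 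Iterating this over disjoint time windows of length $C_0\Phi(\rho)$, the number of windows needed to exit is stochastically dominated by a geometric random variable with success probability $\gtrsim \Phi(\rho)/\Phi(w)$, so $\Ee_x[\tau_{B(z_x,1)\cap D}]\lesssim \Phi(\rho)\cdot \frac{\Phi(w)}{\Phi(\rho)}=\Phi(w)$ — but this is too crude. To sharpen to the geometric mean, I would instead run the diffusive/transversal escape argument at the \emph{intermediate} scale $r_*:=\Phi^{-1}\big(\sqrt{\Phi(\rho)\Phi(w)}\big)$: on balls of this radius (which still fit in $D$ away from the very corner, and fit across the slab up to constants since $\rho\le r_*\le w$), the process either crosses transversally in time $\simeq\Phi(r_*)$, or jumps out; balancing the two contributions at scale $r_*$ gives expected exit time $\lesssim \Phi(r_*)=\sqrt{\Phi(\rho)\Phi(w)}$, which is exactly \eqref{l5-1-1}.

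The main obstacle I anticipate is making the ``thin slab'' geometric reduction fully rigorous and \emph{uniform} in $x$: one must show that for all $x$ with $|x|$ large, $B(z_x,1)\cap D$ genuinely sits inside a slab of half-width $\asymp f^*(x_1-2)$ — this requires quantitative use of $f\in C^{1,1}$ (to flatten $\partial D$ on the unit scale with controlled error) together with $f^*$ being essentially non-increasing, so that the sup of $f$ over the relevant $x_1$-window near $z_x$ is comparable to $f^*(x_1-2)$; the $-2$ shift and the passage $f\leftrightarrow f^*$ are precisely there to absorb the $O(1)$ localization radius. A secondary technical point is choosing the intermediate scale $r_*$ and verifying that balls of radius $r_*$ centered along the slab are contained in $D$ (transversally they reach the boundary, but longitudinally they stay inside because $f$ varies slowly), so that $\ext_{\Phi,\ge}$ is applicable; here one pays attention to the fact that $\ext_{\Phi,\ge}$ only needs $B(x,r)\subset D$, and one works with balls tangent to the boundary via a standard comparison. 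Once these geometric facts are in place, the probabilistic estimate is a routine iteration of the L\'evy-system bound plus $\ext_{\Phi,\ge}$, and the weak scaling \eqref{e:Phi} turns the product of the two scales into $\Phi(\delta_D(x))^{1/2}\Phi(f^*(x_1-2))^{1/2}$.
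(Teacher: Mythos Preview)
Your approach has a genuine gap at precisely the point you flag as ``sharpen to the geometric mean.'' The crude iteration you describe indeed yields only $\Ee^x[\tau]\lesssim\Phi(w)$, and the proposed fix via the intermediate scale $r_*=\Phi^{-1}\big(\sqrt{\Phi(\rho)\Phi(w)}\big)$ does not work as stated: since $r_*>\rho=\delta_D(x)$, balls of radius $r_*$ centered at (or near) $x$ are \emph{not} contained in $D$, so $\ext_{\Phi,\ge}$ is not applicable there; the phrase ``away from the very corner'' cannot save this, because the starting point $x$ is exactly at the corner and it is the initial proximity $\delta_D(x)=\rho$ that must produce the factor $\Phi(\rho)^{1/2}$. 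More fundamentally, the exponent $1/2$ is the boundary decay exponent for this class of processes, and it cannot be extracted from $\ext_{\Phi,\ge}$ plus the L\'evy system alone: those tools give only $\Ee^x[\tau_{B(z_x,r)\cap D}]\lesssim\Phi(r)$ and $\Pp^x(X_{\tau_{B(z_x,r)\cap D}}\in D)\lesssim \Ee^x[\tau]/\Phi(r)$ (cf.\ Lemma~\ref{l5-3}), and chaining these at any sequence of scales never produces a square root---you always recover $\Phi(w)$.

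The paper's argument is completely different and relies essentially on the extra hypotheses $\mathbf{(K_\eta)}$ and $\mathbf{(SD)}$ of Section~\ref{ss:tsehs}, which your proposal does not invoke. Using the uniform exterior ball condition coming from $f\in C^{1,1}$, one finds a ball $B(u_x,\kappa)\subset D^c$ tangent to $\partial D$ near $z_x$, so that $B(z_x,1)\cap D\subset D_x:=B(u_x,2)\setminus B(u_x,\kappa)$, a $C^{1,1}$ domain with \emph{uniform} characteristics independent of $x$. One then applies the sharp Green function estimate of \cite[Theorem~1.6]{GKK} on $D_x$, which gives
\[
G_{D_x}(x,z)\le c\,\frac{\Phi(|x-z|)}{|x-z|^d}\left(1\wedge\frac{\Phi(\delta_{D_x}(x))}{\Phi(|x-z|)}\right)^{1/2},
\]
and it is this cited estimate that supplies the $1/2$. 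The proof is completed by writing $\Ee^x[\tau_{B(z_x,1)\cap D}]\le\int_{B(z_x,1)\cap D}G_{D_x}(x,z)\,dz$, noting $\delta_{D_x}(x)\le 3\delta_D(x)$, and splitting the integral over $B(x,f^*(x_1-2))$ and its complement in $B(z_x,1)\cap D$; the latter piece uses that the slab has transversal width $\le f^*(x_1-2)$ to bound the surface measure of spheres. The assumptions $\mathbf{(K_\eta)}$ and $\mathbf{(SD)}$ are precisely what \cite{GKK} needs, so they are not decorative here.
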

\begin{proof}
Throughout this proof we will consider $x\in D$ such that $f^*(x_1-2) \le 1/2$ and $\delta_D (x)\le 1/2$.
Suppose that $z_x=\big(z_{1},\tilde z\big)$ with $|\tilde z|=f(z_1)$. By a proper orthonormal transformation
of $\tilde x$ in $\R^{d-1}$, we can assume that
$z_x=\big(z_1,f(z_1),0,\dots, 0\big)$.
Let $y=\big(z_1,f(z_1)+\delta_D(x),\dots,0\big)$. Then, $y\in D^c$ and $\delta_{D^c}(y)\le |y-z_x|= \delta_D(x)$.

Suppose $y_x \in \partial D$ such that $|y-y_x|=\delta_{D^c}(y)$. Since $f$  is a $C^{1,1}$ function,
due to the uniform exterior ball condition (see e.g.\ \cite[p.\ 1309]{CKS}), there is a constant $0<\kappa<{1}/{2}$
(which is independent of $x \in D$) such that for every $x\in D$ with $|x|$ large enough,
$B(u_x,\kappa) \subset D^c$, where $u_x:=y_x+\kappa\cdot({y-y_x})/{|y-y_x|}$.
For every $z \in B(z_x,1)\cap D$ with $|x|$ large enough,
\begin{equation*}
\begin{split}
|z-u_x|\le |z-z_x|+|y-z_x|+|u_x-y|\le 1+\delta_D(x)+\kappa\le 2,
\end{split}
\end{equation*} and
so $ B(z_x,1)\cap D\subseteq B(u_x,2)\setminus B(u_x,\kappa)=:D_x$.
Clearly
$D_x$ is a $C^{1,1}$ domain with characteristics $(R_0,\Lambda)$, which are independent of $x$ when $|x|$ is large enough. According to \cite[Theorem 1.6]{GKK}, there exists a constant $c_1>0$ such that for every $x \in D$ with $|x|$ large enough
and $z \in D_x$
\begin{equation}\label{l5-1-2}
G_{D_x}(x,z)\le {c_1}
\frac{\Phi(|x-z|)}{|x-z|^{d}}\left(1\wedge \frac{\Phi(\delta_{D_x}(x))}{\Phi(|x-z|)}\right)^{1/2}\left(1\wedge \frac{\Phi(\delta_{D_x}(z))}{\Phi(|x-z|)}\right)^{1/2},
\end{equation}
where $G_{D_x}(\cdot,\cdot):D_x\times D_x \rightarrow [0,\infty)$ denotes the Green function of the process $X$ on $D_x$.
We want to emphasis that the constant $c_1$ above only depends on the
characteristics $(R_0,\Lambda)$,
and it is independent of $x$.

Using $B(z_x,1)\cap D\subseteq D_x$ and applying \eqref{l5-1-2}, we find that \begin{equation}\label{l5-1-3}
\begin{split}
\Ee_x\big[\tau_{B(z_x,1)\cap D}\big]
&=\int_{B(z_x,1)\cap D}G_{B(z_x,1)\cap D}(x,z)\,dz\\
&\le \int_{B(z_x,1)\cap D}G_{D_x}(x,z)\,dz\\
&\le c_1
\int_{B(z_x,1)\cap D}
\frac{\Phi(|x-z|)}{|x-z|^{d}}\left(1\wedge \frac{\Phi(\delta_{D_x}(x))}{\Phi(|x-z|)}\right)^{1/2}\,dz\\
&\le c_1\Phi(\delta_{D_x}(x))^{1/2}\int_{B(z_x,1)\cap D}\frac{\Phi(|x-z|)^{1/2}}{|x-z|^{d}}\,dz.
\end{split}
\end{equation}
Since $y_x\in \partial B(u_x,\kappa)\subseteq \partial D_x$, by \eqref{e:Phi} we have
\begin{equation*}
\begin{split}
\Phi(\delta_{D_x}(x))\le\Phi( |x-z_x|+|z_x-y|+|y-y_x|)\le \Phi(3\delta_D(x)) \le c_2 \Phi(\delta_D(x)) .
\end{split}
\end{equation*}
On the other hand, since $\delta_D (x)\le 1/2$, we have $B(z_x,1)\cap D\subseteq U_x^1 \cup U_x^2$,
where $U_x^1=B(x,f^*(x_1-2))$ and $U_x^2=\{z \in \R^d: f^*(x_1-2)\le |z-x| \le 2\}\cap D$. Thus,
\begin{equation*}
\begin{split}
\int_{B(z_x,1)\cap D}\frac{\Phi(|x-z|)^{1/2}}{|x-z|^{d}}\,dz&
\le \int_{U_x^1}\frac{\Phi(|x-z|)^{1/2}}{|x-z|^{d}}\,dz
+\int_{U_x^2}\frac{\Phi(|x-z|)^{1/2}}{|x-z|^{d}}\,dz=:I_1+I_2.
\end{split}
\end{equation*}
Using \eqref{e:Phi},   we have  \begin{align*}
I_1&\le  c_3 \int_0^{f^*(x_1-2)}\frac{\Phi(r)^{1/2}}{r}\,dr\\
&=
c_3\Phi(f^*(x_1-2))^{{1}/{2}}    \int_0^{f^*(x_1-2)}\frac{\Phi(r)^{1/2}}{\Phi(f^*(x_1-2))^{{1}/{2}}r}\,{dr} \\
&   \le c_4\Phi(f^*(x_1-2))^{{1}/{2}}    \int_0^{f^*(x_1-2)}\frac{r^{\la/2-1}}{f^*(x_1-2)^{\la/2}}\,{dr}  \\
& \le  c_5 \Phi(f^*(x_1-2))^{{1}/{2}}.
\end{align*}
Moreover,  note that the Lebesgue surface measure of $U_x^2 \cap \{z \in \R^d:|z-x|=r\}$ is not bigger than
$$ c_6\max_{|s-x_1| \le 2}f(s)^{d-1} \le  c_6f^* (x_1-2)^{d-1}$$
 for any $r \le 2$. Using  \eqref{l5-1-3} and the fact that $1+{\ua}/{2}<2\le d$, we get
 \begin{align*}
I_2
&\le
c_7f^* (x_1-2)^{d-1} \int_{f^*(x_1-2)}^2     \frac{ \Phi(r)^{1/2}}{r^d} \,dr \\
&=  c_8\Phi(f^*(x_1-2))^{1/2} f^* (x_1-2)^{d-1}   \int_{f^*(x_1-2)}^2      \frac{ \Phi(r)^{1/2} } {\Phi(f^*(x_1-2))^{1/2}r^d }
\,dr \\
& \le
c_8  \Phi(f^*(x_1-2))^{1/2} f^* (x_1-2)^{d-1}  \int_{f^*(x_1-2)}^2      \frac{r^{\ua/2-d} } {f^*(x_1-2)^{\ua/2} }
\,dr \\
&\le
c_9   \Phi(f^*(x_1-2))^{1/2}.
\end{align*}

Combining all the estimates with \eqref{l5-1-3} yields the desired conclusion \eqref{l5-1-1}.
\end{proof}

The following two lemmas are needed, see \cite[Lemma 1.10]{CKS} or \cite[Lemma 5.1]{GKK} for
the first one, and  \cite[Corollary 2.4]{GKK} for the second one.

\begin{lemma}\label{l5-2}
Let $U_1$ and $U_3$ be two open subsets of
an open set $U\subset \R^d$ such that $dist(U_1,U_3)>0$, and let $U_2=
U\setminus (U_1\cup U_3)$. Then, for
every $t>0$, $x \in U_1$ and $y \in U_3$,
\begin{equation}\label{l5-2-1}
\begin{split}
p^U(t,x,y)\le &\Pp^x\big(X_{\tau_{U_1}}\in U_2\big)
\sup_{0<s<t,z\in U_2}p^U(s,z,y)\\
&+\big(t \wedge \Ee^x[\tau_{U_1}]\big)
\sup_{u \in U_1,z\in U_3}J(u,z).
\end{split}
\end{equation}
\end{lemma}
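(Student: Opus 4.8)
The plan is to apply the strong Markov property of $X^U$ at the first exit time $\tau_{U_1}$ and then split the resulting expectation according to where the process first lands outside $U_1$. Since $x\in U_1$ while $y\in U_3$ lies outside $U_1$, we have $p^{U_1}(t,x,y)=0$, so the strong Markov property at $\tau_{U_1}$ (i.e.\ the same kind of decomposition as in \eqref{e:dden}, now for the open subset $U_1\subset U$) gives
\[
p^U(t,x,y)=\Ee^x\big[p^U(t-\tau_{U_1},X_{\tau_{U_1}},y)\,\I_{\{\tau_{U_1}<t\}}\big].
\]
On $\{\tau_{U_1}<t\}$ one has $X_{\tau_{U_1}}\notin U_1$, hence $X_{\tau_{U_1}}\in U_2\cup U_3$ whenever $X_{\tau_{U_1}}\in U$, while the integrand vanishes if $X_{\tau_{U_1}}\notin U$. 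I would therefore split the right-hand side into the contribution of $\{X_{\tau_{U_1}}\in U_2\}$ and that of $\{X_{\tau_{U_1}}\in U_3\}$.

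For the $U_2$-part I would simply estimate $p^U(t-\tau_{U_1},X_{\tau_{U_1}},y)\le\sup_{0<s<t,\,z\in U_2}p^U(s,z,y)$ and pull this supremum out of the expectation, which leaves the factor $\Pp^x(X_{\tau_{U_1}}\in U_2)$; this reproduces the first term on the right-hand side of \eqref{l5-2-1}. The main point is the $U_3$-part, and it is here that $\mathrm{dist}(U_1,U_3)>0$ is used: the process can pass from $U_1$ into $U_3$ only by a jump from the interior of $U_1$, so by the Ikeda--Watanabe formula (the time-dependent version of the L\'evy system \eqref{e:levy}; cf.\ \cite[Appendix A]{CK1}),
\[
\Pp^x\big(\tau_{U_1}\in ds,\ X_{\tau_{U_1}}\in dw\big)=\Big(\int_{U_1}p^{U_1}(s,x,z)\,J(z,w)\,dz\Big)\,ds\,dw,\qquad w\in U_3.
\]
Inserting this, the $U_3$-contribution becomes $\int_0^t\!\int_{U_1}\!\int_{U_3}p^{U_1}(s,x,z)\,J(z,w)\,p^U(t-s,w,y)\,dw\,dz\,ds$, and I would bound it by pulling out $\sup_{u\in U_1,\,z'\in U_3}J(u,z')$, using $\int_{U_3}p^U(t-s,w,y)\,dw\le\int_{\R^d}p(t-s,w,y)\,dw\le1$ (by $p^U\le p$ and symmetry of $p$), and then $\int_0^t\int_{U_1}p^{U_1}(s,x,z)\,dz\,ds=\int_0^t\Pp^x(\tau_{U_1}>s)\,ds=\Ee^x[\tau_{U_1}\wedge t]$. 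This yields exactly the second term $(t\wedge\Ee^x[\tau_{U_1}])\sup_{u\in U_1,\,z\in U_3}J(u,z)$, and adding the two estimates completes the argument.

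The step I expect to be the main obstacle is the rigorous justification of the Ikeda--Watanabe identity above, namely that the exit from $U_1$ into the positively separated set $U_3$ occurs by a jump from the interior, so that the joint law of $(\tau_{U_1},X_{\tau_{U_1}})$ restricted to $\{X_{\tau_{U_1}}\in U_3\}$ is absolutely continuous with the stated density, the boundary event $\{X_{\tau_{U_1}-}\in\partial U_1,\ X_{\tau_{U_1}}\in U_3\}$ being null precisely because $\mathrm{dist}(U_1,U_3)>0$. This is standard for symmetric pure-jump processes carrying the L\'evy system \eqref{e:levy}, and is exactly the content of \cite[Lemma 1.10]{CKS} and \cite[Lemma 5.1]{GKK}; the remaining steps are elementary.
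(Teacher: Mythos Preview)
Your argument is correct and is precisely the standard proof; the paper itself does not give a proof at all but simply cites \cite[Lemma 1.10]{CKS} and \cite[Lemma 5.1]{GKK}, whose arguments are exactly the strong Markov decomposition plus L\'evy-system computation you wrote out. One small wording issue: $\int_0^t\Pp^x(\tau_{U_1}>s)\,ds=\Ee^x[\tau_{U_1}\wedge t]$ is not literally equal to $t\wedge\Ee^x[\tau_{U_1}]$, but since $\Ee^x[\tau_{U_1}\wedge t]\le t\wedge\Ee^x[\tau_{U_1}]$ the bound goes the right way and nothing is lost.
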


\begin{lemma}\label{l5-3}
There exists a constant $c>0$ such that for every open set $U$, $r \in (0,1]$ and $y \in \R^d$,
\begin{equation}\label{l5-3-1}
\Pp^x \big(X_{\tau_{B(y,r)}\cap
U} \in
U\big)\le  \frac{c\, \Ee^x[\tau_{B(y,r)\cap U}]}{\Phi(r)},\quad x \in
B(y,{r}/{2})\cap U.
\end{equation}
\end{lemma}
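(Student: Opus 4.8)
The plan is to establish \eqref{l5-3-1} by a Dynkin-type argument built on a single cutoff function; this has the advantage of never referring to the Green function of the (possibly very irregular, even disconnected) set $B(y,r)\cap U$. Write $V:=B(y,r)\cap U$ and fix $x\in B(y,r/2)\cap U$. Since $V\subseteq B(y,r)$ is bounded, $\tau_V<\infty$ $\Pp^x$-a.s., and on the event $\{X_{\tau_V}\in U\}$ one necessarily has $X_{\tau_V}\notin B(y,r)$, hence $|X_{\tau_V}-y|\ge r$. First I would fix $\phi\in C_c^\infty(\Rd)$ with $0\le\phi\le1$, $\phi\equiv0$ on $B(y,r/2)$, $\phi\equiv1$ on $\Rd\setminus B(y,3r/4)$, $\nabla\phi\equiv0$ on $\partial B(y,r/2)\cup\partial B(y,3r/4)$, and $\|\nabla\phi\|_\infty\le c_0/r$, $\|\nabla^2\phi\|_\infty\le c_0/r^2$; such a $\phi$ exists for every $r\in(0,1]$ with $c_0$ independent of $r$, by dilating a fixed profile. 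Then $\phi(x)=0$ and $\phi(X_{\tau_V})=1$ on $\{X_{\tau_V}\in U\}$, so
$$\Pp^x\big(X_{\tau_V}\in U\big)\le \Ee^x\big[\phi(X_{\tau_V})\big].$$

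Next I would apply Dynkin's formula to $\phi$ at the bounded stopping time $\tau_V\wedge n$ --- legitimate since $C_c^\infty(\Rd)$ lies in the domain of the generator $\mathcal{L}$ of $X$ under \eqref{e:dfc} --- and use $\phi(x)=0$ together with the fact that $X_s\in V\subseteq B(y,r)$ for $s<\tau_V$. Letting $n\to\infty$ and invoking dominated convergence (recall $0\le\phi\le1$ and $\tau_V<\infty$ a.s.) gives
$$\Pp^x\big(X_{\tau_V}\in U\big)\le \Ee^x\!\Big[\int_0^{\tau_V}\mathcal{L}\phi(X_s)\,ds\Big]\le \Big(\sup_{w\in B(y,r)}|\mathcal{L}\phi(w)|\Big)\,\Ee^x[\tau_V].$$
So everything reduces to the pointwise bound $\sup_{w\in B(y,r)}|\mathcal{L}\phi(w)|\le c/\Phi(r)$, which I regard as the crux of the argument (if $\Ee^x[\tau_V]=\infty$ there is nothing to prove).

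To prove this bound I would write the generator in symmetrised form: for $w\in\Rd$, using $J(w,z)=J(z,w)$ and the substitution $z=w+h$,
\begin{align*}
\mathcal{L}\phi(w)=\int_{\Rd}\Big[&\big(\phi(w+h)+\phi(w-h)-2\phi(w)\big)\,\tfrac{J(w,w+h)+J(w,w-h)}{2}\\
&+\big(\phi(w+h)-\phi(w-h)\big)\,\tfrac{J(w,w+h)-J(w,w-h)}{2}\Big]\,dh.
\end{align*}
For the first summand I would use $|\phi(w+h)+\phi(w-h)-2\phi(w)|\le c\min\{|h|^2/r^2,1\}$ together with the bound $J(w,z)\le c\,|w-z|^{-d}\Phi(|w-z|)^{-1}$, which in the present setting holds for \emph{all} $w\ne z$ because $\chi$ is nondecreasing and constant on $(0,1]$, hence $\chi(\cdot)\ge\chi(0)>0$; splitting the $h$-integral over $\{|h|\le r\}$, $\{r<|h|\le1\}$, $\{|h|>1\}$ and using the weak scaling \eqref{e:Phi}, the conditions $0<\la$ and $\ua<2$ make the one-dimensional integrals $\int_0^r t\,\Phi(t)^{-1}\,dt\le c\,r^2\Phi(r)^{-1}$ and $\int_r^\infty t^{-1}\Phi(t)^{-1}\,dt\le c\,\Phi(r)^{-1}$ finite, yielding a contribution $\le c/\Phi(r)$. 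For the second summand the $|h|$-dependent factors of $J$ cancel (since $J(x,y)$ is given by \eqref{e:jgeneral}), so by Assumption ${\bf(K_\eta)}$ one has $|J(w,w+h)-J(w,w-h)|\le c\,|h|^{\eta}|h|^{-d}\Phi(|h|)^{-1}$ for $|h|\le1$; combined with $|\phi(w+h)-\phi(w-h)|\le c\min\{|h|/r,1\}$, the same splitting and \eqref{e:Phi} --- now using $\eta>\ua/2$ to guarantee $\int_0^r t^{\eta}\Phi(t)^{-1}\,dt<\infty$ --- again give a contribution $\le c/\Phi(r)$. Altogether $\sup_{w\in B(y,r)}|\mathcal{L}\phi(w)|\le c/\Phi(r)$, and \eqref{l5-3-1} follows.

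The main obstacle is precisely this operator estimate $\|\mathcal{L}\phi\|_\infty\le c/\Phi(r)$: a naive triangle-inequality bound on $\mathrm{p.v.}\int(\phi(w+h)-\phi(w))J(w,w+h)\,dh$ loses a power of $r$ and fails to be $O(1/\Phi(r))$ when $\la<1$, so one is forced to exploit the even/odd symmetrisation above, and this is exactly where the stable-like behaviour of $J$ near the diagonal and the Hölder regularity ${\bf(K_\eta)}$ of $\kappa$ enter. (An alternative route --- representing $\Pp^x(X_{\tau_V}\in U)$ through the L\'evy system and the Green function $G_V$ --- I would not pursue, since $V=B(y,r)\cap U$ need not be $C^{1,1}$ and $\Ee^x[\tau_V]$ can be arbitrarily small when $U$ is thin near $x$, leaving no useful lower bound on $G_V$ with which to close the estimate.)
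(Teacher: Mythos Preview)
The paper does not give its own proof of this lemma; it simply cites \cite[Corollary~2.4]{GKK}. Your Dynkin-formula approach with a radial cutoff is a natural self-contained route, and the pointwise bound $\sup_{w\in B(y,r)}|\mathcal{L}\phi(w)|\le c/\Phi(r)$ that you isolate is indeed the heart of the matter. The even/odd decomposition, combined with the explicit form \eqref{e:jgeneral} of $J$ and assumption ${\bf(K_\eta)}$, handles the non-isotropic part correctly, and your splitting of the $h$-integral over $\{|h|\le r\}$, $\{r<|h|\le1\}$, $\{|h|>1\}$ together with the weak scaling \eqref{e:Phi} closes the estimate as claimed.

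Two small slips to tidy up. First, a function in $C_c^\infty(\Rd)$ cannot be identically~$1$ on all of $\Rd\setminus B(y,3r/4)$; you should take $\phi\in C_b^2(\Rd)$ instead. This is harmless: once your computation shows $\mathcal{L}\phi$ is bounded and $\Ee^x[\tau_V]<\infty$, Dynkin's formula for $\tau_V\wedge n$ and dominated convergence go through exactly as you wrote. Second, the blanket assertion that ``$C_c^\infty(\Rd)$ lies in the domain of $\mathcal{L}$ under \eqref{e:dfc}'' is not correct in that generality --- it is precisely ${\bf(K_\eta)}$ (in force throughout Section~\ref{ss:tsehs}) that makes the principal-value integral defining $\mathcal{L}\phi$ absolutely convergent after symmetrisation and thereby legitimises the Dynkin step. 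Your concluding remark, that a L\'evy-system/Green-function route is obstructed because $V=B(y,r)\cap U$ can be arbitrarily irregular, is well taken and explains why the test-function argument is the right tool.
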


Now, we can give upper bound estimates for the ground state $\phi_1$.

\begin{proposition}\label{p5-1}The following three statements hold.

\begin{itemize}
\item[(1)] Suppose  that $\gamma=0$ in \eqref{e:Exp}. Then there exists
a constant $c_1>0$ such that for every $x \in D$ with $|x|$ large enough,
\begin{equation*}\label{p5-1-2}
\phi_1(x)\le c_1 \Phi(\delta_{D}(x))^{1/2}\Phi(f^*(x_1-2))^{1/2}|x|^{-d}\Phi(|x|)^{-1}.
\end{equation*}

\item[(2)]
Suppose that $\gamma \in (0,1]$ in \eqref{e:Exp}. Then there exist constants
$c_i>0$ $(i=1,2)$ such that for every $x \in D$ with $|x|$ large enough,
\begin{equation}\label{p5-1-2a}
\phi_1(x)\le c_1 \Phi(\delta_{D}(x))^{1/2}\Phi(f^*(x_1-2))^{1/2}
\exp\Big(-c_2|x|^{\gamma}\Big).
\end{equation}

\item[(3)] Suppose that $\gamma \in (1,\infty]$ in \eqref{e:Exp}.
Then,
for any
increasing function  $g(r)$  satisfying that $4\le r/g(r) \le {r}/{4}$ for $r>0$ large enough,
there exist constants
$c_i>0$ $(i=1,2)$ such that for every $x \in D$ with $|x|$ large enough,
\begin{equation}\label{p5-1-1}
\begin{split}
\phi_1(x)&\le
c_1\Phi(\delta_{D}(x))^{1/2}\Phi(f^*(x_1-2))^{1/2}\\
&\quad \times \exp\Bigg\{-\frac{c_2|x|}{g(|x|)}\Big[g(|x|)^{\gamma}\wedge\log\Big(1+\frac1{f^*({|x|}/{4})}\Big)\Big]\Bigg\}.
\end{split}\end{equation}
In particular, for $\gamma=\infty$, there exist constants
$c_i>0$ $(i=3,4)$ such that for every $x \in D$ with $|x|$ large enough,
\begin{equation}\label{p5-1-11}
\begin{split}
\phi_1(x)&\le
c_3\Phi(\delta_{D}(x))^{1/2}\Phi(f^*(x_1-2))^{1/2} \exp\Big[-c_4{|x|}\log\Big(1+\frac{1}{f^*({|x|}/{4})}\Big)\Big].
\end{split}\end{equation}
\end{itemize}
\end{proposition}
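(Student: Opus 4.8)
The plan is to use intrinsic ultracontractivity to replace $\phi_1(x)$ by an off-diagonal estimate for the Dirichlet heat kernel $p^D(1,x,y_0)$ at a fixed base point $y_0$ near the origin, and then to bound this kernel by decomposing a path from $x$ to $y_0$ into an \emph{escape} part in a unit neighbourhood of $x$ and a \emph{return} part that must cross the macroscopic distance $|x|$ inside the thin horn. We fix $y_0\in D:=D_f$ with $B(y_0,2\rho_0)\subset D$ for some $\rho_0>0$ (possible since $f(0)>0$). Assumption~(3) together with \cite[Theorem~3.2]{DS} gives $p^D(1,x,y_0)\ge c\,\phi_1(x)\phi_1(y_0)$, hence $\phi_1(x)\le C\,p^D(1,x,y_0)$ with $C$ independent of $x$; so it suffices to bound $p^D(1,x,y_0)$ from above for $|x|$ large.

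Let $z_x\in\partial D$ with $\delta_D(x)=|x-z_x|$ and $B_x:=B(z_x,1)\cap D$. For $|x|$ large we have $\delta_D(x)\le f(x_1)<1/2$, so $x\in B(z_x,1/2)\cap D$ and $B_x$ lies at distance $\ge|x|/4$ from $\{z\in D:|z|<|x|/2\}$. We would apply Lemma~\ref{l5-2} with $U=D$, $U_1=B_x$, $U_3=\{z\in D:|z|<|x|/2\}\ni y_0$, $U_2=D\setminus(B_x\cup U_3)$. Since $X_{\tau_{B_x}}\in U_2$ forces $X_{\tau_{B_x}}\in D$, Lemma~\ref{l5-3} (with $y=z_x$, $r=1$) and then Lemma~\ref{l5-1} bound both $\Pp^x(X_{\tau_{B_x}}\in U_2)$ and $\Ee^x[\tau_{B_x}]$ by $c\,E(x)$, where $E(x):=\Phi(\delta_D(x))^{1/2}\Phi(f^*(x_1-2))^{1/2}$ is the escape factor. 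For $z\in U_2$ we have $|z-y_0|\ge|x|/4$, so $\OD_{\Gamma,\le}$ (valid for $p^D$ as $p^D\le p$, with prefactor bounded on $(0,1]$) gives $\sup_{0<s<1,\,z\in U_2}p^D(s,z,y_0)\le c\,\Gamma(|x|/4)$; and for $u\in B_x$, $z\in U_3$ we have $|u-z|\ge|x|/4$, so by \eqref{e:jgeneral} and monotonicity of $\Phi,\chi$, $\sup_{u\in B_x,\,z\in U_3}J(u,z)\le L_0\big((|x|/4)^d\Phi(|x|/4)\chi(|x|/4)\big)^{-1}$. Lemma~\ref{l5-2} then gives $p^D(1,x,y_0)\le c\,E(x)\big(\Gamma(|x|/4)+((|x|/4)^d\Phi(|x|/4)\chi(|x|/4))^{-1}\big)$. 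For $\gamma=0$ (where $\chi$ is bounded above and below and $\Gamma(s)=s^{-d}\Phi(s)^{-1}$) this bracket is $\le c\,|x|^{-d}\Phi(|x|)^{-1}$ by \eqref{e:Phi}, proving part~(1); for $\gamma\in(0,1]$ (where $\chi(r)\ge L_1e^{c_1r^{\gamma}}$ for $r>1$ and $\Gamma(s)=\exp(-c(1+s)^{\gamma})$ since $\gamma\wedge1=\gamma$, $(\gamma-1)_+=0$) it is $\le c\,e^{-c'|x|^{\gamma}}$, proving part~(2).

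For $\gamma\in(1,\infty]$ this single step only produces the factor $\Gamma(|x|/4)$ — the crude ``one long jump'' bound — which is weaker than \eqref{p5-1-1} when $f$ decays fast, so the return part must be estimated by iteration. We would put $R\simeq|x|$, $g=g(R)$, $N:=\lceil 2R/g\rceil\simeq|x|/g(|x|)$, and work with the concentric shells $V_k:=\{z\in D:\,kg/2<|z|<(k+1)g/2\}$, $0\le k<N$, of radial width $g/2$ and transverse width $\le f^*(|x|/4)$ on the relevant range ($y_0\in V_0$, $x\in V_{k^*}$ with $k^*\simeq2|x|/g$). Iterating Lemma~\ref{l5-2} through these shells (with $U_1$ the current outer region, $U_3$ the union of the inner shells, $U_2$ a one-shell buffer), together with the Lévy system \eqref{e:levy} and Lemma~\ref{l5-1}, we expect each crossing $V_k\to V_{k-1}$ to cost a factor $\exp\big(-c\,(g^{\gamma}\wedge\log(1/f^*(|x|/4)))\big)$: either a single radial jump of length $\simeq g$, penalised by $\chi(g)\ge L_1e^{c_1g^{\gamma}}$, or a landing in the part of the next shell inside the thin horn, penalised by its small volume and short exit time, which yields a power $(f^*(|x|/4))^{c}$. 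Multiplying these $N$ factors and combining with $E(x)$ gives \eqref{p5-1-1}; \eqref{p5-1-11} is the case $\gamma=\infty$, where $g^{\gamma}=\infty$ so only the $\log(1/f^*)$ alternative survives and one may take $g$ essentially constant.

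The reduction and the $\gamma\le1$ cases are routine: one use of Lemmas~\ref{l5-1}--\ref{l5-3} and the explicit shapes of $\Phi,\chi,\Gamma$. The main obstacle is the shell iteration for $\gamma>1$ — running Lemma~\ref{l5-2} through $N$ nested regions with all constants independent of $N$, in particular controlling $\sup_{0<s<1,\,z\in U_2}p^D(s,z,y_0)$ on each buffer shell without blow-up, and showing the per-step cost is the \emph{minimum} $g^{\gamma}\wedge\log(1/f^*(|x|/4))$ rather than the sum of the two contributions. We expect this to need, besides Lemma~\ref{l5-2}, careful bookkeeping of the exit times of the intermediate regions (via $\ext_{\Phi,\ge}$ and the uniform exterior-ball geometry of $D$ near the relevant boundary pieces, as in the proof of Lemma~\ref{l5-1}) and a comparison of the heat-kernel factors across consecutive shells.
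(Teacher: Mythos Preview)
Your treatment of parts (1) and (2) is essentially the paper's: reduce $\phi_1(x)$ to $p^D(1,x,y_0)$ via intrinsic ultracontractivity, apply Lemma~\ref{l5-2} once with $U_1=B(z_x,1)\cap D$, bound the exit probability and exit time by Lemmas~\ref{l5-3} and~\ref{l5-1}, and use the off-diagonal heat-kernel bounds from \cite{CK1,CKK1}. Your choice of $U_3=\{|z|<|x|/2\}$ versus the paper's $U_2=\{|z-x|\le|x-y_0|/2\}\setminus U_1$ is immaterial.

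For part (3) you have identified the right mechanism --- $N\simeq|x|/g(|x|)$ crossings, each costing $\exp(-c(g^\gamma\wedge\log(1/f^*)))$ --- but your described iteration of Lemma~\ref{l5-2} with ``$U_1$ the current outer region'' has a gap: Lemmas~\ref{l5-1} and~\ref{l5-3} apply only to $B(z_x,1)\cap D$, not to a large annular $U_1$, so you cannot directly control $\Ee^x[\tau_{U_1}]$ or $\Pp^x(X_{\tau_{U_1}}\in U_2)$ at intermediate steps. The paper resolves this differently. It sets $V_k=\{z\in D:|z-x|\le kg(|x|)\}$ (nested balls around $x$), puts $h_k:=\sup_{0<s\le1,\,z\in V_k}p^D(s,z,y_0)$ with $h_{N+1}\equiv1$, and --- crucially --- at each step takes $U_1=B(z_u,1)\cap D$ for a point $u\in V_k$, so that Lemma~\ref{l5-1} gives the factor $\Phi(f^*(x_1^{(k)}))$. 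Combining with the L\'evy system yields the recursion
\[
h_k\le c\,\Phi(f^*(x_1^{(k)}))\sum_{l=k+1}^{N+1}\exp\big(-C(l-k-1)g(|x|)^\gamma\big)h_l,
\]
and the same argument with $u=x$ gives $p^D(1,x,y_0)\le c\,E(x)\sum_{k=1}^{N+1}e^{-C(k-1)g^\gamma}h_k$. Iterating this recursion $N-1$ times produces a sum over all subsets $\{k_1<\cdots<k_l\}\subset\{1,\dots,N\}$ of terms $\prod_i\Phi(f^*(x_1^{(k_i)}))\cdot e^{-C(N-l)g^\gamma}$; taking the supremum over $l$ and using $\Phi(f^*)\le c\,f^*\le c\,e^{-c'\log(1/f^*)}$ gives the minimum in the exponent. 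This combinatorial unwinding --- rather than a clean product of $N$ identical factors --- is what delivers the $\wedge$ in \eqref{p5-1-1}, and is the step your sketch does not yet contain.
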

\begin{proof}

Note that the intrinsic ultracontractivity of $(P_t^D)_{t \ge 0}$ implies that
\begin{equation*}
\phi_1(x)\le \frac{1}{c\phi_1(y_0)}p^D(1,x,y_0)\le c' p^D(1,x,y_0),\quad x \in D,
\end{equation*}
where $y_0=(1,\tilde 0)$ and $c'>0$ is a constant independent of $x\in D$.
Therefore, in order to get the desired assertions it suffices to consider upper bound estimates of
$p^D(1,x,y_0)$. Throughout the proof, we consider
$x \in D$ with $|x|$ large enough  such that $f^*(|x|/3)\vee f^*(x_1-2) \vee \delta_D (x) \le 1/2$. Let $z_x\in \partial D$ such that $|z_x-x|=\delta_D(x)$, and set $U_x=B(z_x,1)\cap D$.

\smallskip

\noindent
(1) By \cite[Theorem 1.2]{CK1}, it holds that for $t>0$ and $x,y\in D$,
\begin{align}
\label{e:gamma0ub}
p^D(t,x,y)\le p(t,x,y)\le c_0\Big((\Phi^{-1}(t))^{-d} \wedge \frac{t}{|x-y|^{d} \Phi(|x-y|)}\Big).
\end{align}
Recall $U_x=B(z_x,1)\cap D$. Let $U_1=U_x$,
$U_2=\{z \in D: |z-x| \le {|x-y_0|}/{2}\} \setminus U_1$ and $U_3=D\setminus (U_1\cup U_2)$
 in \eqref{l5-2-1}. Then, by \eqref{e:gamma0ub}, we have
\begin{equation}
\label{e:gamma0ub1}\begin{split}
\sup_{0<s<1,z\in U_2}p(s,z,y_0)\le &\sup_{0<s<1,|z-y_0|\ge |x-y_0|-|z-x|\ge
{|x-y_0|}/{2}}p(s,z,y_0)\\
\le & c_1|x|^{-d}\Phi(|x|)^{-1}
\end{split} \end{equation} and
\begin{equation}\begin{split}
\label{e:gamma0ub2}
\sup_{u \in U_1,z\in U_3}J(u,z)\le & \sup_{|u-z|\ge |x-z|-|x-u|\ge {|x-y_0|}/{2}-2\ge
{|x-y_0|}/{3}}J(u,z)\\
\le & c_1|x|^{-d}\Phi(|x|)^{-1}.
\end{split}\end{equation}
 Thus
\begin{equation*}\label{p5-1-3a}
\begin{split}
p^D(1,x,y_0)
&\le \Pp^x\big(X_{\tau_{U_1}}\in U_2\big)
\sup_{0<s<1,z\in U_2}p^D(s,z,y_0)\\
&\quad+\big(1 \wedge \Ee^x[\tau_{U_1}]\big)
\sup_{u \in U_1,z\in U_3}J(u,z)\\
&\le c_2\Ee^x[\tau_{B(z_x,1)\cap D}]\Big(\sup_{0<s<1,z\in U_2}p(s,z,y_0)+
\sup_{u \in U_1,z\in U_3}J(u,z)\Big)\\
&\le c_3\Phi(\delta_{D}(x))^{1/2}\Phi(f^*(x_1-2))^{1/2}|x|^{-d}\Phi(|x|)^{-1},
\end{split}
\end{equation*}
where the second inequality follows from \eqref{l5-3-1} and in the
third inequality we have used \eqref{l5-1-1}, \eqref{e:gamma0ub1} and \eqref{e:gamma0ub2}.

\smallskip

\noindent
(2) Suppose $\gamma \in (0,1]$. Then, by \cite[Theorem 1.2]{CKK1}, for any $t \in (0,1]$ and $x,y\in D$ with $|x-y|\ge 1$,
\begin{equation*}
p^D(t,x,y)\le p(t,x,y)\le c_1\exp\Big(-c_2|x-y|^{\gamma}\Big).
\end{equation*}
Using this instead of \eqref{e:gamma0ub} and following the same argument as (1), One can get \eqref{p5-1-2a} immediately.

\smallskip
\noindent
(3) Suppose $\gamma \in (1,\infty]$. In this part we need much more delicate arguments instead of applying \eqref{l5-2-1} directly. The proof is a little long, and it is split into four steps.

(i)
 For any $x\in D$ with $|x|$ large enough such that $g(|x|)\wedge (|x|/g(|x|))\ge 4$, define
\begin{equation*}
\begin{split}
&V_k=\big\{z \in D: |z-x|\le k g(|x|)\big\},\\
&h_k=\sup_{0<s\le 1,z\in V_k}p^D(s,z,y_0),\quad
1\le k \le N=\bigg \lfloor\frac{|x|}{4g(|x|)}\bigg\rfloor,\\
&h_{N+1}\equiv 1.
\end{split}
\end{equation*}
Recall that  $U_x=B(z_x,1)\cap D$.
By the strong Markov property, we have
\begin{equation} \label{e:I_20}
\begin{split}
p^D(1,x,y_0)&=\Ee^x\Big[
p^D(1-\tau_{U_x},X_{\tau_{U_x}},y_0)\I_{\{\tau_{U_x}<1\}}\Big]\\
&=\Ee^x\Big[
p^D(1-\tau_{U_x},X_{\tau_{U_x}},y_0)\I_{\{\tau_{U_x}<1\}}\I_{\{X_{\tau_{U_x}}\in V_1\setminus U_x\}}\Big]\\
&\quad +\Ee^x\Big[
p^D(1-\tau_{U_x},X_{\tau_{U_x}},y_0)\I_{\{\tau_{U_x}<1\}}\I_{\{X_{\tau_{U_x}}\in D\setminus V_1\}}\Big]\\
&=:I_1+I_2.
\end{split}
\end{equation}

Note that,  by \eqref{l5-3-1},
\begin{equation}\label{e:I_200}
I_1\le \Pp^x\big(X_{\tau_{U_x}}\in V_1\setminus U_x\big) \sup_{0<s\le 1,z\in V_1}p^D(s,z,y_0)\le c_1\Ee^x[\tau_{U_x}] h_1.
\end{equation}

On the other hand, since $D\setminus V_1=\Big(\sum_{k=2}^N V_k\setminus V_{k-1}\Big)\bigcup V_{N+1}$, where
$V_{N+1}:=D\setminus V_N$, by the L\'evy system \eqref{e:levy} of the process $X$,
\begin{equation}\label{e:I_21}\begin{split}
I_2=&\int_0^1\int_{U_x}\int_{D\setminus V_1}
p^{U_x}(s,x,y)J(y,z)p^D(1-s,z,y_0)\,dz\,dy\,ds\\
=&\sum_{k=2}^N\int_0^1\int_{U_x}\int_{V_k\setminus V_{k-1}}
p^{U_x}(s,x,y)J(y,z)p^D(1-s,z,y_0)\,dz\,dy\,ds\\
&+\int_0^1\int_{U_x}\int_{V_{N+1}}
p^{U_x}(s,x,y)J(y,z)p^D(1-s,z,y_0)\,dz\,dy\,ds\\
\le& \sum_{k=2}^N\Bigg[\bigg(\int_0^1\int_{U_x}p^{U_x}(s,x,y)\,dy\,ds\bigg) |V_k\setminus V_{k-1}|
\sup_{y\in U_x, z\in V_k\setminus V_{k-1}}J(y,z)\\
&\qquad\times \sup_{0<s\le 1,z\in V_k\setminus V_{k-1}}
p^D(s,z,y_0)\Bigg]\\
&+ \Bigg[\int_0^1\bigg(\int_{U_x}p^{U_x}(s,x,y)\,dy\bigg)\bigg(\int_{D}p^D(1-s,z,y_0)\,dz\bigg)\,ds\Bigg] \\
&\quad\times\sup_{y \in U_x,z\in V_{N+1}}J(y,z)\\
\le&\Ee^x[\tau_{U_x}] \sum_{k=2}^N\left(
\sup_{y\in U_x, z\in V_k\setminus V_{k-1}}J(y,z) \right) |V_k\setminus V_{k-1}| h_k\\
&+ \Ee^x[\tau_{U_x}] \sup_{y \in U_x,z\in V_{N+1}}J(y,z).
\end{split}\end{equation}
For any $z \in V_N$ and any $x\in D$ with $|x|$ large enough,
 \begin{align*}
z_1 &\ge x_1-Ng(|x|) \ge x_1- ({|x|}/({4g(|x|)}) +1)g(|x|)\\
&=x_1-{|x|}/{4} -g(|x|) \ge x_1 -{|x|}/{2}  \ge {|x|}/{3},
\end{align*}
and so, by the definition of horn-shaped region,
 \begin{align*}
 \label{e:I_22}
\max_{ 2\le k \le N} |V_k\setminus V_{k-1}|
  \le c_1f^*(|x|/3)^{d-1} g(|x|)\le c_1 2^{1-d} g(|x|).
 \end{align*}
Since
for every $ 2\le k \le N$,
$y\in U_x$ and $z\in D \setminus V_{k}$,
$$
|y-z| \ge |x-z|-|x-y| \ge kg(|x|) -2 \ge (k-1)g(|x|),
$$ which, along with the assumption that $\gamma>1$, implies that for every $ 2\le k \le N$
\begin{equation*}\label{e:I_23}\begin{split}
  \sup_{y\in U_x, z\in V_k\setminus V_{k-1}}J(y,z)&\le
c_2 \sup\Big\{e^{-C_0|y-z|^{\gamma}}:|y-z|\ge (k-2)g(|x|)\Big\}\\&\le
c_2 \exp\Big(-C_0(k-2)g(|x|)^{\gamma}\Big)
\end{split}\end{equation*}
and
\begin{equation*}
 \label{e:I_24}\begin{split}
\sup_{y\in U_x, z\in V_{N+1}}J(y,z)&\le
c_2 \sup\Big\{e^{-C_0|y-z|^{\gamma}}:|y-z| \ge (N-1)g(|x|)\Big\}\\&\le
c_2  \exp\Big(-C_0(N-1)g(|x|)^{\gamma}\Big).
\end{split}\end{equation*}

Applying all the estimates above into \eqref{e:I_21}, we get that
$$I_2\le c_3\Ee^x[\tau_{U_x}]  \sum_{k=2}^{N+1}\exp\Big(-C_1(k-1)g(|x|)^{\gamma}\Big) h_k,$$ which together with \eqref{e:I_20} and \eqref{e:I_200}
in turn
yields that
\begin{equation}\label{p5-1-3}\begin{split}
&p^D(1,x,y_0)\\
&\le c_4\Ee^x[\tau_{U_x}]  \sum_{k=1}^{N+1}\exp\Big(-C_1(k-1)g(|x|)^{\gamma}\Big) h_k \\
&\le c_5\Phi(\delta_{D}(x))^{1/2}\Phi(f^*(x_1-2))^{1/2}\sum_{k=1}^{N+1}\exp\Big(-C_1(k-1)g(|x|)^{\gamma}\Big) h_k,
\end{split}\end{equation} where we have used \eqref{l5-1-1} in the last inequality above.

(ii) For any $1\le k \le N$, let $x_1^{(k)}=x_1-2-kg(|x|)$.
For any fixed $u \in V_k$, let $z_u$ be a point on $\partial D$ satisfying $|u-z_u|=\delta_D(u) \le 1/2$, and let $U_u=B(z_u,1)\cap D$.
We first observe that $U_u \subset V_{k+1}$, and that
for every $ k+1 \le l \le N$,
$y\in U_u$ and $z\in D \setminus V_{l}$
\begin{align*}
|y-z| &\ge |x-z|-|x-u|-\delta_D(u)-|z_u-y| \\
&\ge (l-k) g(|x|) -2 \ge 2^{-1}(l-k)g(|x|).
\end{align*}
Hence, for $k+2 \le l \le N$,
\begin{equation*}
 \label{e:I_23k}\begin{split}
 \sup_{y\in U_u, z\in V_l\setminus V_{l-1}}J(y,z)&\le
c'_2 \sup\Big\{e^{-C_0|y-z|^{\gamma}}:|y-z|\ge 2^{-1}(l-k-1)g(|x|)\Big\}\\&\le
c'_2 \exp\Big(-C_2(l-k-1)g(|x|)^{\gamma}\Big)
\end{split}\end{equation*}
and
\begin{equation*}
 \label{e:I_24k}\begin{split}
\sup_{y\in U_u, z\in V_{N+1}}J(y,z)&\le
c'_2 \sup\{e^{-C_0|y-z|^{\gamma}}:|y-z| \ge2^{-1} (N-k)g(|x|)\}\\&\le
c'_2  \exp\Big(-C_2(N-k)g(|x|)^{\gamma}\Big).
\end{split}\end{equation*}
Thus, following the same arguments to derive \eqref{p5-1-3},  we have that for any $t \in (0,1]$ and $u \in V_k$,
\begin{align*}
&p^D(t,u,y_0)\\
&\le  \Pp^u(X_{\tau_{U_u}}\in V_{k+1}) \sup_{0<s\le 1,z\in V_{k+1}}p^D(s,z,y_0)\\
&\quad +\sum_{l=k+2}^N\int_0^t\int_{U_u}\int_{V_l \setminus V_{l-1}}
p^{U_u}(s,u,y)J(y,z)p^D(t-s,z,y_0)\,dz\,dy\,ds\\
&\quad +\int_0^t\int_{U_u}\int_{V_{N+1}}
p^{U_u}(s,u,y)J(y,z)p^D(t-s,z,y_0)\,dz\,dy\,ds\\
&\le
c_6  \Ee^u[\tau_{U_u}] \Bigg[h_{k+1}+\! \sum_{l=k+2}^N\left(
\sup_{y\in U_u, z\in V_l\setminus V_{l-1}}\!J(y,z)\! \right) \!|V_l \setminus V_{l-1}| h_l+\! \left(\sup_{y \in U_u,z\in V_{N+1}}\!J(y,z) \!\right)\!\! \Bigg]\\
&\le
c_7 \Ee^u[\tau_{U_u}]\sum_{l=k+1}^{N+1} \exp\Big(-C_3(l-(k+1))g(|x|)^{\gamma}\Big) h_l \\
&\le c_8 \Phi(f^*(x_1^{(k)}))\sum_{l=k+1}^{N+1} \exp\Big(-C_3(l-(k+1))g(|x|)^{\gamma}\Big) h_l.
\end{align*}
Here in the last inequality above
we have used \eqref{l5-1-1}, \eqref{e:Phi} and the facts that $\delta_D(u)\le f(u_1) \le f^*(u_1)$ and
$
u_1 \ge x_1-kg(|x|)$ to get that \begin{align*}
\Ee^u[\tau_{U_u}] &\le c'_6 \Phi(\delta_{D}(u))^{1/2}\Phi(f^*(u_1-2))^{1/2}\\
&\le c'_6 \Phi(f^*(u_1))^{1/2}\Phi(f^*(u_1-2))^{1/2}\le c'_7 \Phi(f^*(x_1^{(k)})).
\end{align*}
Therefore, for $1\le k  \le N$,
\begin{equation}\label{p5-1-4}
\begin{split}
h_k&=\sup_{0<s\le 1, z \in V_k}p(s,z,y_0)\\
&\le c_8 \Phi(f^*(x_1^{(k)}))\sum_{l=k+1}^{N+1} \exp\big(-C_3(l-(k+1))g(|x|)^{\gamma}\big) h_l.
\end{split}
\end{equation}

(iii)
Below, we will apply \eqref{p5-1-4} into \eqref{p5-1-3}  to estimate the remaining
$h_k$ term for $2\le k\le N-1$. Note that if we continue the procedure for $l$ times, then it only remains $h_i$ with index $l+1 \le i \le N$.
For simplicity, we relabel and use again constants $C, C_i>0$ $(i\ge1)$ in the argument below without confusion.
Thus
\begin{align*}
\sum_{k=1}^{N+1}&\exp\big(-C(k-1)g(|x|)^{\gamma}\big) h_k \\
 \le &  C_1 \Phi(f^*(x_1^{(1)})) \sum_{l=2}^{N+1} \exp\big(-C(l-2)g(|x|)^{\gamma}\big) h_l  +
 \sum_{k=2}^{N+1}\exp\big(-C(k-1)g(|x|)^{\gamma}\big) h_l \\
 =
 &  C_1 \Phi(f^*(x_1^{(1)}))h_2 + C_1 \Phi(f^*(x_1^{(1)})) \sum_{l=3}^{N+1} \exp\big(-C(l-2)g(|x|)^{\gamma}\big) h_l \\
 & +
\exp\big(-Cg(|x|)^{\gamma}\big) h_2  +
 \sum_{k=3}^{N+1}\exp\big(-C(k-1)g(|x|)^{\gamma}\big) h_k \\
  \le
 &  C_1^2 \Phi(f^*(x_1^{(1)})) \Phi(f^*(x_1^{(2)}))\sum_{l=3}^{N+1} \exp\big(-C(l-3)g(|x|)^{\gamma}\big) h_l \\
 & + C_1 \Phi(f^*(x_1^{(1)})) \sum_{l=3}^{N+1} \exp\big(-C(l-2)g(|x|)^{\gamma}\big) h_l \\
 & +
C_1 \Phi(f^*(x_1^{(2)}))\exp\big(-Cg(|x|)^{\gamma}\big)  \sum_{l=3}^{N+1} \exp\big(-C(l-3)g(|x|)^{\gamma}\big) h_l \\
 & +
 \sum_{k=3}^{N+1}\exp\big(-C(k-1)g(|x|)^{\gamma}\big) h_k \\
\le
 &  C_1^3 \Phi(f^*(x_1^{(1)})) \Phi(f^*(x_1^{(2)})) \Phi(f^*(x_1^{(3)}))  h_4\\
 &+ C_1^2 \Phi(f^*(x_1^{(1)}))  \Phi(f^*(x_1^{(2)})) \exp\big(-  Cg(|x|)^{\gamma}\big) h_4 \\
 & +
C_1^2 \Phi(f^*(x_1^{(2)}))  \Phi(f^*(x_1^{(3)}))  \exp\big(-C g(|x|)^{\gamma}\big) h_4 \\
&+  C_1^2 \Phi(f^*(x_1^{(1)})) {\Phi(f^*(x_1^{(3)}))}\exp\big(-  Cg(|x|)^{\gamma}\big)  h_4 \\
 & + C_1 \Phi(f^*(x_1^{(1)}))  \exp\big(-2Cg(|x|)^{\gamma}\big) h_4 \\
 &+
C_1 \Phi(f^*(x_1^{(2)}))\exp\big(-2Cg(|x|)^{\gamma}\big)   h_4 \\
 & +
C_1 \Phi(f^*(x_1^{(3)}))  \exp\big(-2Cg(|x|)^{\gamma}\big)  h_4 \\
& +\exp\big(-3Cg(|x|)^{\gamma}\big) h_4 \\
&+ \cdots.
\end{align*}
In the argument above and below we assume that $C_1>1$.

Note also that, by \cite[(1.16) in Theorem 1.2]{CKK1},  \begin{align*} h_{N} &=\sup_{0<s\le 1, z \in V_{N}}p(s,z,y_0)
\\&\le \sup_{0<s<1,|z-y_0|\ge |x-y_0|-|z-x|\ge
{|x-y_0|}/{2}}p(s,z,y_0)\le c_9 \exp(-c_{10}|x|^\gamma) \le c_9.
\end{align*}
and
$h_{N+1} \equiv 1$.

Therefore, repeating the procedure $N-1$ times and using the notational convention that $\prod_{i=1}^0\Phi(f^*(x^{(k_i)}_1) =1$,  we can conclude from \eqref{p5-1-3} that
\begin{align*}
&p^D(1,x,y_0)\\
&\le C_2
\Phi(\delta_{D}(x))^{1/2}\Phi(f^*(x_1-2))^{1/2}\\
&\quad \times
\sum_{l=0}^{N} C_1^l\left(\sum_{1\le k_1<k_2\dots <k_l\le N }\prod_{i=1}^l  \Phi(f^*(x^{(k_i)}_1))\right)\exp\big(-C_3(N-l)g(|x|)^{\gamma}\big).
\end{align*}

(iv) Since $f^*$ is non-increasing, we have for every $1\le k_1<k_2\dots <k_l\le N-1$,
\begin{equation*}
\sum_{i=1}^l \log\left(1+\frac{1}{f^*(x^{(k_i)}_1)}\right)
\ge \sum_{i=N-l+1}^N  \log\left(1+\frac{1}{f^*(x^{(i)}_1)}\right)
\ge c_{11}l\log \left(1+\frac{1}{f^*({|x|}/{4})}\right),
\end{equation*}
where the last inequality follows from the fact that $x_1^{(k)}\ge {|x|}/{4}$ for every $1\le k \le N$.
The inequality above finally leads to the following estimate
\begin{equation*}
\begin{split}
p^D(1,x,y_0)
&\le C_4C_1^N
\Phi(\delta_{D}(x))^{1/2}\Phi(f^*(x_1-2))^{1/2}\\
&\quad\times
\sup_{0\le l \le N}\exp\Big\{-C_{5}\Big[l\log\Big(1+\frac{1}{f^*({|x|}/{4})}\Big)+(N-l)g(|x|)^{\gamma}\Big]\Big\}\\
&\le C_6\Phi(\delta_{D}(x))^{1/2}\Phi(f^*(x_1-2))^{1/2} \\
&\quad  \times \exp\Bigg\{-\frac{C_7|x|}{g(|x|)}\Big[g(|x|)^{\gamma} \wedge\log\Big(e+\frac{1}{f^*({|x|}/{4})}\Big)\Big]\Bigg\},
\end{split}
\end{equation*}
where in the first inequality we used \eqref{e:Phi}, and in the last inequality we have used the fact that $N=\left\lfloor \frac{|x|}{4g(|x|)}\right \rfloor$. Therefore, we have now obtained \eqref{p5-1-1}.
In particular, when $\gamma=\infty$,
\begin{equation*}
\begin{split}
\phi_1(x)&\le
c'\Phi(\delta_{D}(x))^{1/2}\Phi(f^*(x_1-2))^{1/2} \exp\Bigg(-\frac{c''|x|}{g(|x|)}\log\Big(1+\frac{1}{f^*({|x|}/{4})}\Big)\Bigg).
\end{split}\end{equation*}
Thus, by taking $g(r)=4$ for $r>0$, we obtain \eqref{p5-1-11}.
\end{proof}

\subsection{Lower bound estimates for ground state}
In this subsection, we turn to lower bound estimates of ground state $\phi_1$.  We begin with the following lower bound estimate for the
survival probability.
Recall that $D=D_f$ is a horn-shaped region.
\begin{lemma}\label{l5-4} There exists a constant $c>0$ such that for every $x\in D$ with $|x|$ large enough
\begin{equation}\label{l5-4-1}
\Pp^x\big(\tau_{B(z_x,1)\cap D}>\Phi(f_*(x_1+1))\big)\ge
\frac{c\Phi(\delta_{D}(x))^{1/2}}{\Phi(f_*(x_1+1))^{1/2}},
\end{equation}where $z_x \in \partial D$ such that
$|x-z_x|=\delta_D(x)$.
\end{lemma}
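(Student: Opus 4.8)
The plan is to place, tangentially to $\partial D$ at the nearest boundary point $z_x$, a ball $U\subset D\cap B(z_x,1)$ of radius comparable to $f_*(x_1+1)$ with $\delta_U(x)=\delta_D(x)$, and then to read off the survival probability of $X$ in $U$ from the two‑sided Dirichlet heat kernel estimate for $X$ on the $C^{1,1}$ ball $U$. Write $\rho:=f_*(x_1+1)$ and $\delta:=\delta_D(x)$; since $f$ is bounded we have $|x|\asymp x_1$ on $D_f$, so taking $|x|$ large forces $x_1$ large and hence $f(x_1),\rho\to0$. It suffices to treat the range $\delta\le c_0\rho$ for a small fixed $c_0>0$ chosen below: when $\delta>c_0\rho$ one has $B(x,\delta\wedge\tfrac14)\subset D\cap B(z_x,1)$, so $\ext_{\Phi,\ge}$ — iterated finitely many times, as its constant $C_0$ may be $<1$, and combined with \eqref{e:Phi} — gives $\Pp^x(\tau_{B(z_x,1)\cap D}>\Phi(\rho))\ge c$, which is the asserted bound in that range.

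To construct $U$: since $|x-z_x|=\delta<c_0\rho<1$, the first coordinate $z_1$ of $z_x$ lies in $[1,x_1+1]$ for $|x|$ large, hence $f(z_1)\ge f_*(x_1+1)=\rho$. As $f\in C^{1,1}$, the horn‑shaped region $D_f$ obeys a uniform interior ball condition, and near $z_x$ the interior ball radius is $\simeq f(z_1)$ (the governing boundary curvature being of order $1/f(z_1)$); consequently there is a fixed $\kappa\in(0,1)$, independent of $x$, so that with $\nu$ the inner unit normal to $\partial D$ at $z_x$ the ball $U:=B(z_x+\kappa\rho\,\nu,\kappa\rho)$ lies in $D$, and for $|x|$ large enough that $2\kappa\rho<1$ also $U\subset B(z_x,1)$. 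Since $z_x$ realizes $\delta_D(x)$ we have $x=z_x+\delta\nu$, so $|x-(z_x+\kappa\rho\,\nu)|=\kappa\rho-\delta$, whence $x\in U$ with $\delta_U(x)=\kappa\rho-(\kappa\rho-\delta)=\delta=\delta_D(x)$; and the concentric ball $B(q,\kappa\rho/2)$, $q:=z_x+\kappa\rho\,\nu$, satisfies $\delta_U\ge\kappa\rho/2$ throughout.

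Next I would invoke the two‑sided Dirichlet heat kernel estimate for $X$ on a bounded $C^{1,1}$ open set — valid for all $t>0$ with constants depending only on the data of $J$ (under \eqref{e:jgeneral}, \eqref{e:Phi}, {\bf (K$_\eta$)}, {\bf (SD)}; cf.\ \cite{GKK,CKS}) — applied to $U$ at $t:=\Phi(\rho)$: for $z\in B(q,\kappa\rho/2)$,
\[
p^U(\Phi(\rho),x,z)\ \gtrsim\ \Big(1\wedge\tfrac{\Phi(\delta_U(x))}{\Phi(\rho)}\Big)^{1/2}\Big(1\wedge\tfrac{\Phi(\delta_U(z))}{\Phi(\rho)}\Big)^{1/2}\Big(\Phi^{-1}(\Phi(\rho))^{-d}\wedge\tfrac{\Phi(\rho)}{|x-z|^{d}\Phi(|x-z|)}\Big).
\]
On $B(q,\kappa\rho/2)$ one has $\delta_U(z)\ge\kappa\rho/2$, so the second factor is $\gtrsim1$ by \eqref{e:Phi}; $\delta_U(x)=\delta<\kappa\rho/4<\rho$ (shrinking $c_0$ if needed), so the first factor equals $(\Phi(\delta)/\Phi(\rho))^{1/2}$; and $|x-z|\le(\kappa\rho-\delta)+\kappa\rho/2\le\tfrac32\kappa\rho$ while $|x-z|\ge(\kappa\rho-\delta)-\kappa\rho/2>\kappa\rho/4$, so $|x-z|\simeq\rho$ and, using $\Phi^{-1}(\Phi(\rho))=\rho$ and \eqref{e:Phi}, the last factor is $\gtrsim\rho^{-d}$. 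Hence $p^U(\Phi(\rho),x,z)\gtrsim\rho^{-d}(\Phi(\delta)/\Phi(\rho))^{1/2}$ uniformly on $B(q,\kappa\rho/2)$, and integrating over this ball, whose volume is $\simeq\rho^{d}$,
\[
\Pp^x\big(\tau_U>\Phi(\rho)\big)=\int_U p^U(\Phi(\rho),x,z)\,dz\ \gtrsim\ \Big(\frac{\Phi(\delta_D(x))}{\Phi(f_*(x_1+1))}\Big)^{1/2};
\]
together with $\tau_U\le\tau_{B(z_x,1)\cap D}$ this is precisely \eqref{l5-4-1}.

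I expect the main obstacle to be the heat kernel input: one needs the \emph{sharp} $C^{1,1}$ Dirichlet heat kernel estimate, with the $\tfrac12$‑power boundary factors $(1\wedge\Phi(\delta_U(\cdot))/t)^{1/2}$, to hold on the ball $U$ of radius $\asymp\rho$ at the "diameter‑scale" time $t\asymp\Phi(\rho)$, with constants that do not degenerate as $\rho\downarrow0$. This is exactly where the structural hypotheses on $J$ — the explicit form \eqref{e:jgeneral}, the weak scaling \eqref{e:Phi}, and {\bf (K$_\eta$)}, {\bf (SD)} — together with the scale‑uniform theory of heat kernels of such jump processes in $C^{1,1}$ open sets are used; the accompanying geometric point, that a ball of radius $\asymp\rho$ truly fits inside $D$ tangentially at $z_x$, rests on $f\in C^{1,1}$ together with $f(z_1)\ge f_*(x_1+1)=\rho$. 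One could instead try to bound the hitting probability $\Pp^x(\sigma_{B(q,\kappa\rho/4)}<\tau_U)$ via the Lévy system \eqref{e:levy} and the two‑sided Green function estimate \eqref{l5-1-2}, but converting such a bound into a survival‑\emph{time} bound of the correct order seems to require exactly the refined heat kernel control used above.
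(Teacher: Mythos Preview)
Your overall strategy---place an interior ball $U\subset B(z_x,1)\cap D$ of radius $\asymp f_*(x_1+1)$ tangent at $z_x$, then bound $\Pp^x(\tau_U>\Phi(\rho))$ from below---is exactly the paper's. The difference is in how the survival bound in $U$ is obtained, and here your approach has the gap you yourself flag.

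You want to apply the sharp $C^{1,1}$ Dirichlet heat kernel lower bound to $X$ on the shrinking ball $U$ of radius $\kappa\rho$ at time $\Phi(\rho)$ with constants independent of $\rho$. For general $\gamma\in[0,\infty]$ in \eqref{e:Exp} this is not available off the shelf: the process $X$ is \emph{not} scale-invariant, since the factor $\chi(|x-y|)$ in \eqref{e:jgeneral} does not transform as a power under dilation. So the constants in the \cite{GKK} estimates, stated for a fixed $C^{1,1}$ set, cannot be assumed uniform as $\rho\downarrow0$ without further argument. Your final paragraph correctly identifies this as the crux, but you stop short of resolving it.

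The paper's proof fills precisely this gap by a two-step reduction. First, via Meyer's construction it introduces an auxiliary process $X^0$ with the same small-jump part but with $\gamma=0$ (polynomial large-jump decay), coupled so that $X_t=X^0_t$ until the first jump of size $>1$. Since $U$ has diameter $<1$, the event $\{\tau^0_U>\Phi(\rho)\}$ forces no such jump, whence $\Pp^x(\tau_U>\Phi(\rho))\ge\Pp^x(\tau^0_U>\Phi(\rho))$. Second, the $\gamma=0$ process $X^0$ \emph{does} scale cleanly: rescaling by $\delta_1=\rho$ yields a process of the same type with $\Phi_{(\delta_1)}$ and $\kappa_{(\delta_1)}$ satisfying \eqref{e:Phi}, {\bf(SD)}, {\bf(K$_\eta$)} with constants uniform in $\delta_1\le1$, on a ball of fixed radius $\kappa$. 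Then \cite[Lemma~7.2]{GKK} (a survival-time lower bound, not the full heat kernel estimate) applies with uniform constants, giving $\Pp^x(\tau^0_U>\Phi(\rho))\gtrsim(\Phi(\delta_U(x))/\Phi(\rho))^{1/2}$. This Meyer-plus-scaling device is the missing ingredient; once you supply it, your heat-kernel-integration route and the paper's direct citation of the survival lemma become equivalent.

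A minor geometric remark: your claim $\delta_U(x)=\delta_D(x)$ (via $x=z_x+\delta\nu$) assumes the inner normal at $z_x$ is well-defined and that the interior ball can be taken with center exactly on this normal at distance $\kappa\rho$. The paper is content with the weaker $c_0\delta_D(x)\le\delta_U(x)\le\delta_D(x)$, which follows from the uniform interior ball condition alone and suffices by \eqref{e:Phi}.
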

\begin{proof}
We assume that $x\in D$ with $|x|$ large enough such that
$|x|\ge 1$ and $f(x_1) \le 1/4$.
Since $f$ is a $C^{1,1}$ function, due to the local interior ball
condition (see \cite[p.\ 1039]{CKS}), there exist constants $\kappa \in (0,1)$ and
 $c_0>0$ such that, for all $x\in D$ with $|x|$ large enough one can choose a ball $U:=B\big(\xi_x,\kappa f_*(x_1+1)\big)$
satisfying that
$x\in U \subset  B(z_x,1) \cap D$ and $c_0\delta_D(x)\le \delta_U(x)\le \delta_D(x)$.

Let $X^0:=(X^0_t)_{\ge0}$  (on the same probability space) be a symmetric jump process on $\R^d$, whose jumping kernel
$J^0(x,y)$ is given by \eqref{e:jgeneral} with  $\gamma =0$,
and $\kappa$ and $\Phi$ are same as those for jumping kernel $J(x,y)$.
One can regard $X^0$ as the process obtained  by  the Meyer's construction (see \cite[Remark
3.5]{BBCK})
through increasing the intensity of
jumps for the process $X$ larger than $1$,
so that
$X_t=X_t^0$ for any $t\in(0, N_1^0)$, where
$$
N^0_1:=\inf\big\{t\ge0: |X^0_t-X^0_{t-}|>1\big\}.$$ In the following, for any subsect $A\subset \R^d$, let
$$\tau^0_{A}=\inf\big\{t>0:X_t^0\notin A \big\}$$
 be the
first exit time from $A$ of the process $X^0$. Note
that, under $\Pp^x$ the event $\{X_t^0 \in U\textrm{
for any } t\in [0,\Phi(f_*(x_1+1))]\}$  implies that the
process $(X_t^0)_{t\ge0}$ does not have any jump bigger than $1$ in time interval
$[0,\Phi(f_*(x_1+1))]$.
That is,  under $\Pp^x$,
$$\{\tau^0_{U}>\Phi(f_*(x_1+1))\}\subseteq \{\tau_{U}>\Phi(f_*(x_1+1))\}.$$ Therefore, for any $x\in D$ with $|x|$ large enough,
\begin{equation}\label{l5-4-1n}\begin{split}
\Pp^x\big(\tau_{B(z_x,1)\cap D}>\Phi(f_*(x_1+1))\big)\ge &\Pp^x\big(\tau_{U}>\Phi(f_*(x_1+1))\big)\\
 \ge &
 \Pp^x\big(\tau^0_{U}>\Phi(f_*(x_1+1))\big).\end{split}
\end{equation}

Next, we choose
an orthonormal coordinate system $(CS)$
with origin at $z_x$ and, for $\delta >0$, let $X^{(\delta)}:=
\{ \delta^{-1}X^0_{\Phi(\delta) t}: t \ge 0\}$ be the
scaled process of $X^0$. Define
$$\kappa_{(\delta)}(  x,  y) =\kappa( \delta x, \delta y)\quad  \text{and}   \quad   \Phi_{(\delta)}(r)=\Phi(\delta r)/\Phi(\delta). $$
Then
the jumping kernel
$J^{(\delta)}(x,y)$
of the process $X^{(\delta)}$ with respect to the
Lebesgue measure
is related to that of $X^0$ by the following formula
$$J^{(\delta)}(x, y)= \delta^d \Phi(\delta) J^0( \delta x, \delta y)=
 \frac{ \kappa_{(\delta)}( x, y) }{\Phi_{(\delta)}( |x-y|)|x-y|^{d}}.$$
Clearly, by {\bf(SD)},  $\Phi_{(\delta)} \in C^1(0,\infty) $ and
$$ r\mapsto -((\Phi_{(\delta)}(r)^{-1}r^{-d}))'/r=-\Phi(\delta)\delta^d (\delta r)^{-1}
({\Phi(\delta r)^{-1}(\delta r)^{-d}})'$$  is decreasing on $(0,\infty)$.
It is also clear that, by \eqref{e:Phi}, we have
\begin{align*}
\lC\Big(\frac{R}{r}\Big)^{\la}\le \frac{\Phi_{(\delta)}(R)}{\Phi_{(\delta)}(r)}\le \uC\Big(\frac{R}{r}\Big)^{\ua}\quad\hbox{ for
every }~\delta>0, ~ 0<r \le R.
\end{align*}
Finally, if $\delta \le 1$ and $\eta> \ua/2$, then by {\bf (K$_\eta$)}, for every $x,\,h\in \Rd$ with $|h|\leq 1$
 $$|\kappa_{(\delta)}(x,x+h)-\kappa_{(\delta)} (x,x)|=
 |\kappa(\delta x, \delta x+ \delta h)-\kappa(\delta x, \delta x)|
 \leq L |\delta h|^{\eta}   \leq L | h|^{\eta}.
 $$
 Therefore, for $\Phi_{(\delta)}(r)$ and $\kappa_{(\delta)}(x,  y)$ defined above,  \eqref{e:Phi}, {\bf (K$_\eta$)} and {\bf(SD)} hold uniformly for all $ \delta \le 1$.

Since $f_*(x_1+1)^{-1} U=B\big(\frac{\xi_x}{f_*(x_1+1)},\kappa\big)$,
by using
\cite[Lemma 7.2]{GKK}
to
$X^{(\delta_1)}$ with  $ \delta_1 =f_*(x_1+1)$ and \eqref{e:Phi}, we have
\begin{align*}
\Pp^x\left(\tau^0_{U}>\Phi(f_*(x_1+1))\right)&=\Pp^{\delta_1^{-1}x} \left(\tau^{(\delta_1)}_{\delta_1^{-1} U}>1\right)  \ge
c_1\Phi_{(\delta_1)}\big(\delta_{\delta_1^{-1} U}( \delta^{-1}x)\big)\\
&= c_1 \Phi(\delta_1)^{-1/2} \Phi\big(\delta_1 \delta_{\delta_1^{-1} U}( \delta_1^{-1}x)\big)^{1/2}\\
&=\frac{c_1\Phi(\delta_{U}(x))^{1/2}}{\Phi(f_*(x_1+1))^{1/2}}\ge
\frac{c_1\Phi(c_0\delta_{D}(x))^{1/2}}{\Phi(f_*(x_1+1))^{1/2}}\\
&\ge
\frac{c_2\Phi(\delta_{D}(x))^{1/2}}{\Phi(f_*(x_1+1))^{1/2}},
\end{align*}
where $\tau^{(\delta_1)}_{A}$ denotes the first exit time
from $ A $ for the process $X^{(\delta_1)}$.
This together with \eqref{l5-4-1n} yields \eqref{l5-4-1}.
\end{proof}

\begin{proposition}\label{p5-2}
The following two statements hold.
\begin{itemize}
\item[(1)] Suppose that $\gamma=0$ in \eqref{e:Exp}. Then there exists
a constant $c_1>0$ such that for every $x \in D$ with $|x|$ large enough,
\begin{equation}\label{p5-2-1}
\phi_1(x)\ge c_1 \Phi(\delta_{D}(x))^{1/2}\Phi(f_*(x_1+1))^{1/2}|x|^{-d}\Phi(|x|)^{-1}
\end{equation}

\item[(2)] Suppose that $\gamma \in (0,\infty]$ in \eqref{e:Exp}. Then,
for any
increasing function  $g(r)$ satisfying that $4\le r/g(r) \le {r}/{4}$ for $r>0$ large enough,
there exist constants
$c_i>0$ $(i=1,2,3,4)$ such that for every $x \in D$ with $|x|$ large enough,
\begin{equation}\label{p5-2-2}
\phi_1(x)\ge c_1 \Phi(\delta_{D}(x))^{1/2}\Phi(f_*(x_1+1))^{1/2} \exp\big(-c_2|x|^{\gamma}\big),
\end{equation}
and
\begin{equation}\label{p5-2-3}
\begin{split}
\phi_1(x)\ge & c_3\Phi(\delta_{D}(x))^{1/2}\Phi({ f}_*(x_1+1))^{1/2}\\
&\times \exp\Bigg[- c_4 \frac{|x|}{g(|x|)} \bigg(g^\gamma(|x|)+\log \frac{|x|}{ g(|x|)}+
\log \Big(1+\frac{1}{f_*(|x|+1)}\Big)\bigg)\Bigg],\end{split}
\end{equation}
\end{itemize}
\end{proposition}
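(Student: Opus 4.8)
The argument follows the pattern of the upper-bound proofs. Fix $y_0=(1,\tilde 0)\in D=D_f$ and a radius $r_0>0$ small enough that $\overline{B(y_0,2r_0)}\subset D$, and put $W:=B(y_0,r_0)\subset D_0:=B(y_0,2r_0)$. By Lemma \ref{L:lower1} it suffices to bound $P_1^D\I_{D_0}(x)$ from below. In every case the mechanism is the following: force the process started at $x$ (i) to survive in $U_x:=B(z_x,1)\cap D$ (with $z_x\in\partial D$, $|x-z_x|=\delta_D(x)$) for the short time $t_1:=\Phi(f_*(x_1+1))$, which by Lemma \ref{l5-4} has probability at least $c\,\Phi(\delta_D(x))^{1/2}\Phi(f_*(x_1+1))^{-1/2}$; (ii) while still inside $U_x$, to jump --- once, when $J$ decays polynomially, or as the first of a chain of jumps, when $J$ decays super-exponentially --- towards the origin, the relevant probabilities being computed from the L\'evy system \eqref{e:levy} as in \eqref{e3-3} and \eqref{e:piuf}--\eqref{e:piuf1}; (iii) once it has landed in $W$, to remain in $D_0$ for the rest of the unit time interval, which --- since $t_1\to0$ as $|x|\to\infty$ and $W,D_0$ are fixed with $\mathrm{dist}(\overline W,\partial D_0)=r_0$ --- has probability bounded below by a positive constant uniform in $x$, by iterating $\ext_{\Phi,\ge}$.

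For \eqref{p5-2-1} a single jump suffices. For $|x|$ large $W\cap U_x=\emptyset$, so the L\'evy system gives
\[
\Pp^x\big(X_{\tau_{U_x}}\in W,\ \tau_{U_x}\le t_1/2\big)=\Ee^x\Big[\int_0^{\tau_{U_x}\wedge(t_1/2)}\int_W J(X_s,z)\,dz\,ds\Big]\ \ge\ c\,\Big(\inf_{y\in U_x,\,z\in W}J(y,z)\Big)\,t_1\,\Pp^x(\tau_{U_x}>t_1),
\]
and combining this with step (i), with the choice $t_1=\Phi(f_*(x_1+1))$, with $\inf_{y\in U_x,z\in W}J(y,z)\simeq |x|^{-d}\Phi(|x|)^{-1}$ (which uses \eqref{e:jgeneral} with $\gamma=0$, so that $\chi$ is constant, together with \eqref{e:Phi}), and with the constant from step (iii), yields \eqref{p5-2-1}. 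The identical computation gives \eqref{p5-2-2}: now $\inf_{y\in U_x,z\in W}J(y,z)\simeq |x|^{-d}\Phi(|x|)^{-1}\chi(|x|)^{-1}\ge c\exp(-c'|x|^{\gamma})$ by \eqref{e:Exp}, and the factor $|x|^{-d}\Phi(|x|)^{-1}$ is absorbed into $\exp(-c_2|x|^{\gamma})$.

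For \eqref{p5-2-3} we replace the single jump by a chain of $N:=\lceil |x|/g(|x|)\rceil$ jumps of length $\asymp g(|x|)$, following the scheme of Proposition \ref{p:lower2} but with the first link handled by Lemma \ref{l5-4}. As in the proof of Proposition \ref{p4-1}, choose points $x^{(N)}=x,\ x^{(N-1)},\dots,x^{(1)},\ x^{(0)}=y_0$ lying (away from the origin) on the $x_1$-axis with consecutive differences of the form $(g(|x|),\tilde 0)$, so that each $x^{(i)}\in D_f$ and, by the estimate behind \eqref{e:point}, $\delta_{D_f}(x^{(i)})\gtrsim f_*(x_1+1)$; put $D_i=B(x^{(i)},2r_i)$, $\tilde D_i=B(x^{(i)},r_i)$ with $r_i=(\delta_{D_f}(x^{(i)})/3)\wedge\rho_*$ for a small fixed $\rho_*$, so that consecutive $D_i$ and $\tilde D_{i-1}$ are disjoint and at distance $\asymp g(|x|)$ (the finitely many links with $x^{(i)}$ near the origin being treated with fixed constants). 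Iterating the strong Markov property and bounding each link through the L\'evy system as in \eqref{e:piuf}--\eqref{e:piuf1} --- the first link with time budget $t_1/2$ and the survival bound of Lemma \ref{l5-4}, the links $1\le i\le N-1$ with time budget $C_0\Phi(r_i)/(2N)$ and survival constant $C_1$ from $\ext_{\Phi,\ge}$, and the last step with the constant of (iii) --- using $\inf_{D_i\times\tilde D_{i-1}}J\gtrsim \exp(-c\,g(|x|)^{\gamma})g(|x|)^{-d}\Phi(g(|x|))^{-1}$, $|\tilde D_i|\simeq r_i^d$ and $t_1=\Phi(f_*(x_1+1))$, and checking that the accumulated time budget stays below $1$, one obtains
\[
P_1^D\I_{D_0}(x)\ \gtrsim\ \Phi(\delta_D(x))^{1/2}\Phi(f_*(x_1+1))^{1/2}\left(\frac{\exp(-c\,g(|x|)^{\gamma})\,f_*(x_1+1)^d\,\Phi(f_*(x_1+1))}{N\,g(|x|)^d\,\Phi(g(|x|))}\right)^{N}.
\]
Taking logarithms, replacing $\log\Phi(f_*(x_1+1))$ and $\log\Phi(g(|x|))$ by $O(\log\tfrac1{f_*(x_1+1)})$ and $O(\log g(|x|))$ via \eqref{e:Phi}, using $N\asymp|x|/g(|x|)$, $\log N\asymp\log\tfrac{|x|}{g(|x|)}$ and $f_*(x_1+1)\ge f_*(|x|+1)$, and absorbing $\log g(|x|)$ into $g(|x|)^{\gamma}+\log\tfrac{|x|}{g(|x|)}$ (legitimate since $4\le g(r)\le r/4$), gives \eqref{p5-2-3}; the case $\gamma=\infty$ is analogous, with jumps kept within the (finite) range of $J$ so that the factor $\exp(-c\,g(|x|)^{\gamma})$ is replaced by a positive constant. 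Combining these lower bounds for $P_1^D\I_{D_0}(x)$ with Lemma \ref{L:lower1} proves the proposition.

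The delicate point is the bookkeeping in the chain argument: one must carry the refined first-step factor $\Phi(\delta_D(x))^{1/2}\Phi(f_*(x_1+1))^{1/2}$ --- precisely the feature that makes these estimates sharper than what Propositions \ref{p:lower1} and \ref{p:lower2} alone yield --- through a product of $N\approx|x|/g(|x|)$ comparable terms, verify that the intermediate axis points genuinely stay in the horn with $\delta_{D_f}$ bounded below by $\gtrsim f_*(x_1+1)$ (here one uses $f(x_1)\to 0$, so that $\delta_D(x)$ is small for $|x|$ large and the balls $D_i,\tilde D_{i-1},U_x$ are genuinely disjoint), check that the accumulated time budget remains below $1$ so that step (iii) always has a fixed amount of time, and finally reduce the exponent to the clean expression $\tfrac{|x|}{g(|x|)}\big(g(|x|)^{\gamma}+\log\tfrac{|x|}{g(|x|)}+\log(1+\tfrac1{f_*(|x|+1)})\big)$ using only the weak scaling of $\Phi$ and the two-sided constraint on $g$.
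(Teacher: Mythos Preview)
Your proposal is correct and takes essentially the same approach as the paper: reduce via Lemma~\ref{L:lower1} to a lower bound on $P_{t_0}^D\I_{D_0}(x)$, use Lemma~\ref{l5-4} for the survival probability in $U_x=B(z_x,1)\cap D$, the L\'evy system for the jump (single for \eqref{p5-2-1}--\eqref{p5-2-2}, a chain of $\asymp |x|/g(|x|)$ jumps along axis points for \eqref{p5-2-3}, as in Proposition~\ref{p:lower2}), and $\ext_{\Phi,\ge}$ for the final stay near $y_0$. The only cosmetic difference is your choice $t_0=1$ (handled by iterating $\ext_{\Phi,\ge}$ in step~(iii)) versus the paper's $t_0=C_0\Phi(1)$.
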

\begin{proof}
Recall that $\ext_{\Phi,\ge}$ holds with the constant $\tilde r_0=1$ in \eqref{e:exit}  in this setting
(see e.g. \cite[Lemma 2.5]{CKK3}).
Let $x_0=(1,\tilde 0)$, $r_{0}=({\delta_D(x_0)}/{4})\wedge 1$ and
$t_0= c_0 \Phi(1)$, where $c_0$ is a positive constant $C_0$ in \eqref{e:exit}.
We will always consider $x \in D$ such that $|x|$ is large enough and  $\Phi(f_*(x_1+1))\le {t_0}/{4}$.

\smallskip

(1) The proof of \eqref{p5-2-1} is based on that of Proposition \ref{p:lower1} with some modifications.
Set $D_0=B(x_0,2r_0)$.
According to Lemma \ref{L:lower1},
in order to prove the desired lower bound \eqref{p5-2-1} of $\phi_1$, it suffices
to verify the same lower bound (possibly with different constants) for $P_{t_0}^D\I_{D_0}(x)$.

In the following, let $\tilde D_0=B(x_0,r_0)$. For any fixed $x \in D$ with $|x|$ large enough such that $\Phi(f_*(x_1+1))\le
{t_0}/{4}$,
let $D_1=B(z_x,1)\cap D$, where $z_x\in \partial D$ such that $|x-z_x|=\delta_D(x)$.
Then, using the strong Markov property and $\ext_{\Phi,\ge}$, and following the same argument of
\eqref{e3-3}, we can get that
\begin{equation}\label{e:phil1}
\begin{split}
&P^{D}_{t_0}\I_{D_0}(x)\\
&\ge \Pp^x\Big(0<\tau_{D_1}\le \Phi(f_*(x_1+1)),
X_{\tau_{D_1}}\in \tilde D_0; \forall s\in[\tau_{D_1}, t_0], X_s\in
D_0\Big)\\
&\ge \Pp^x\Bigg(\Pp^{X_{\tau_{D_1}}}\Big(\tau_{D_0}>t_0\Big);0<\tau_{D_1}\le \Phi(f_*(x_1+1)),
X_{\tau_{D_1}}\in \tilde D_0\Bigg)\\
&\ge  \Pp^x\Bigg(\Pp^{X_{\tau_{D_1}}}\Big(\tau_{B(X_{\tau_{D_1}},r_{0})}>t_0\Big);0<\tau_{D_1}\le \Phi(f_*(x_1+1)),
X_{\tau_{D_1}}\in \tilde D_0\Bigg)\\
&\ge c_1\Pp^x\Big(0<\tau_{D_1}\le \Phi(f_*(x_1+1)), X_{\tau_{D_1}}\in \tilde D_0\Big).
\end{split}
\end{equation}
According to the L\'evy system \eqref{e:levy}, we have
\begin{equation}\label{e:phil2}
\begin{split}
&\Pp^x\Big(0<\tau_{D_1}\le \Phi(f_*(x_1+1)), X_{\tau_{D_1}}\in \tilde D_0\Big)\\
&=\int_0^{\Phi(f_*(x_1+1))}\,ds
\int_{D_1} p^{D_1}(s,x,y)\,dy\int_{\tilde D_0} J(y,z)\,dz\\
&\ge  \left(\inf_{y\in D_1, z\in \tilde D_0} J(y,z)\right) |\tilde D_0| \int_0^{\Phi(f_*(x_1+1))}\int_{D_1} p^{D_1}(s,x,y)\,dy\,ds\\
&\ge c_2|x|^{-d}\Phi(|x|)^{-1} \Phi(f_*(x_1+1))
\Pp^x\Big( \tau_{D_1}>\Phi(f_*(x_1+1))\Big)\\
&\ge c_3\Phi(\delta_{D}(x))^{1/2}\Phi(f_*(x_1+1))^{1/2} |x|^{-d}\Phi(|x|)^{-1},
\end{split}
\end{equation}
where in the second inequality we have used that for $x \in D$ with $|x|$ large enough
\begin{equation*}
\inf_{y\in D_1, z\in \tilde D_0} J(y,z)\ge \inf_{|y-z|\le  2{|x|}}J(y,z)\ge c_4|x|^{-d}\Phi(|x|)^{-1},
\end{equation*}
and the last inequality follows from \eqref{l5-4-1}.

Combining \eqref{e:phil1} with \eqref{e:phil2}, we prove \eqref{p5-2-1}.

\smallskip

(2) Since the proof of \eqref{p5-2-2} is almost the same as \eqref{p5-2-1}, we omit it here.
The proof of \eqref{p5-2-3} is partly motivated by those of Propositions \ref{p:lower2} and \ref{p4-1}. We consider $x\in D$ with $|x|\ge 2r_*$ for some $r_*>0$ large enough. Set $ n:=\Big \lceil \frac{x_1-r_*}{g(|x|)}\Big \rceil$,
$x^{(i)}=\big(r_*+{i(x_1-r_*)}/{n},\tilde 0\big):=\big(x^{(i)}_1,\tilde 0\big) $ and
$r_i=({\delta_D(x^{(0)})}/{4})\wedge ({\delta_D(x^{(i)})}/{4})\wedge 1$ for any $0\le i \le n-1$.

Define
\begin{equation*}
\begin{split}
& D_n=B(z_x,1)\cap D,\,\, D_i=B(x^{(i)},2r_i),\,\, \tilde D_i=B(x^{(i)},r_i),\quad 0 \le i  \le n-1;\\
& \tilde \tau_{D_n}=\tau_{D_n},\,\, \tilde \tau_{D_i}=\inf\{t>\tilde \tau_{D_{i+1}}: X_t \notin D_i\},\quad 0\le i \le n-1.
\end{split}
\end{equation*}
Noting that for $0\le i\le n-2$,
\begin{equation*}
\frac{g(|x|)}{2}
\le |x^{(i+1)}-x^{(i)}|=\frac{x_1-{r_*}}{n}
\le g(|x|),
\end{equation*}
we can easily check that $0<$dist$(D_i,D_{i+1})\le g(|x|)$ for $0\le i\le n-1$,
which in turn implies that
\begin{equation}\label{p5-2-5}
\inf_{u \in D_i,z\in D_{i+1}}J(u,z)\ge c_1\exp\big(-c_2g^\gamma(|x|)\big),\quad 0\le i \le n-1.
\end{equation}
By using the strong Markov property and following the argument of \eqref{e:piuf}, we have
\begin{equation*}
\begin{split}
P^D_{t_0}\I_{D_0}(x)
& \ge
\Pp^x\Big(0<\tilde \tau_{D_n}\le \Phi(f_*(x_1+1)), 0<\tilde{\tau}_{D_{i}}-\tilde{\tau}_{D_{i+1}}\le \frac{c_0\Phi(r_i)}{2n},\\
&\qquad\quad X_{\tilde{\tau}_{D_i}}\in \tilde D_{i-1}\
{\rm for\ each}\ 1\le i \le n-1; \forall_{s\in[\tilde{\tau}_{D_1},t_0] } X_{s} \in D_0\Big)\\
& = \Pp^x\Big(0<{\tau}_{D_{n}}\le \Phi(f_*(x_1+1)) , X_{{\tau}_{D_{n}}}\in
\tilde D_{n-1}\\
&\qquad\quad\times\Pp^{X_{\tilde{\tau}_{D_{n}}}}\Big(0<\tau_{D_{n-1}}\le \frac{c_0\Phi( r_{n-1})}{2n},X_{{\tau}_{D_{n-1}}}\in
\tilde D_{n-2}\\
&\qquad\quad\,\times \Pp^{X_{\tilde{\tau}_{D_{n-1}}}}\Big(\cdots
\Pp^{X_{\tilde{\tau}_{D_{2}}}}\Big(0<{\tau}_{D_{1}}\le \frac{c_0\Phi(r_1)}{2n},X_{\tau_{D_{1}}}\in
\tilde D_0\\
&\qquad\quad\,\,\times
\Pp^{X_{\tilde{\tau}_{D_{1}}}}\Big(\forall_{s\in[0,t_0-\tilde {\tau}_{D_1}]} X_{s}
\in D_0\Big)\Big)\cdots\Big)\Big)\Big).
\end{split}
\end{equation*}

For any $r>0$, set $\tilde f(r)=\inf_{|s-r|\le f(r)} f(s)$. It is clear that for $r>0$ large enough, $\tilde f(r) \ge f_*(r)$.
For every $2 \le i \le n-1$, if $X_{\tilde{\tau}_{D_{i}}}\in \tilde D_{i-1}$,
then,  by the L\'evy system \eqref{e:levy},  we have
\begin{align*}
&
\Pp^{X_{\tilde{\tau}_{D_{i}}}}\Big(0<{\tau}_{D_{i-1}}\le \frac{c_0\Phi(r_{i-1})}{2n},X_{{\tau}_{D_{i-1}}}\in
\tilde D_{i-2}\Big)\\
&\ge \inf_{y\in \tilde D_{i-1}} \Pp^y\Big(0<{\tau}_{D_{i-1}}\le \frac{c_0 \Phi(r_{i-1})}{2n},X_{{\tau}_{D_{i-1}}}\in
\tilde D_{i-2}\Big)\\
&= \inf_{y\in \tilde D_{i-1}} \int_{D_{i-1}}\,dz\int_0^{\frac{c_0 \Phi(r_{i-1})}{2n}}
p^{D_{i-1}}(s,y,z)\,ds\int_{\tilde D_{i-2}}J(z,u)\,du\\
&\ge \left( \inf_{z\in D_{i-1}, u\in \tilde D_{i-2}} J(z,u)\right) |\tilde D_{i-2}|
\inf_{y\in \tilde D_{i-1}} \int_0^{\frac{c_0\Phi(r_{i-1})}{2n}}\Pp^y(\tau_{D_{i-1}}>s)\,ds\\
&\ge  c_3r_{i-2}^{d} \frac{\Phi(r_{i-1})}{2n} \exp\big(-c_2g^\gamma(|x|)\big)
\inf_{y\in \tilde D_{i-1}}\Pp^y\Big(\tau_{D_{i-1}}> \frac{c_0\Phi(r_{i-1})}{2n}\Big)\\
&\ge c_4  \big({\tilde{f}(x^{(i-2)}_1)\wedge r_0}\big)^{d}
\Phi\big({\tilde{f}(x_1^{(i-1)})\wedge r_0}\big)
\frac{g(|x|)}{|x|} \\
&\quad \times \exp\big(-c_2 g^\gamma(|x|)\big) \inf_{y\in \tilde D_{i-1}}\Pp^y\Big(\tau_{B(y, r_{i-1})}> \frac{c_0\Phi(r_{i-1})}{2}\Big)\\
&\ge c_5  \big({ \tilde{f}(x_1^{(i-2)})\wedge r_0}\big)^{d}
\Phi\big({\tilde{f}(x_1^{(i-1)})\wedge r_0}\big)
\exp\bigg(-c_6 \Big(g^\gamma(|x|)+\log \frac{|x|}{ g(|x|)}\Big)\bigg),
 \end{align*}
where the third inequality is due to \eqref{p5-2-5}, in the forth inequality we have used the facts
that $n\le c'{|x|}/{g(|x|)}$ and, by the definition of horn-shaped region, see e.g.\ \eqref{e:point},
$$r_{i-2}=\delta_D(x^{(i-2)})\wedge r_0\ge {c_7}\big(\tilde{f}(x^{(i-2)}_1)\wedge r_0\big),$$
and in the last one we used $\ext_{\Phi,\ge}$.

Furthermore, according to the L\'evy system \eqref{e:levy}, \eqref{p5-2-5} and \eqref{l5-4-1},  we know immediately that
\begin{equation*}
\begin{split}
&\Pp^x\Big(0<\tilde \tau_{D_n}\le \Phi(f_*(x_1+1)), X_{\tilde \tau_{D_n}} \in \tilde D_{n-1}\Big)\\
&=\int_{D_n}\,dz\int_0^{\Phi(f_*(x_1+1))}p^{D_n}(s,x,z)\,ds\int_{\tilde D_{n-1}}J(z,u)\, du\\
&\ge c_8r_{n-1}^d \Phi(f_*(x_1+1)) \exp\big(-c_2 g^\gamma(|x|)\big) \Pp^x\big(\tau_{B(z_x,1)\cap D}>\Phi(f_*(x_1+1))\big)\\
&\ge c_9\big(\tilde{f}(x^{(n-1)}_1)\wedge r_0\big)^{d}\Phi(\delta_{D}(x))^{1/2}\Phi(f_*(x_1+1))^{1/2}\exp(-c_2 g^\gamma(|x|)).
\end{split}
\end{equation*}

On the other hand, using $\ext_{\Phi,\ge}$ again, if
$X_{\tilde{\tau}_{D_1}}\in \tilde D_0$, then
\begin{equation*}
\begin{split}
\Pp^{X_{\tilde{\tau}_{D_{1}}}}\Big(\forall_{s\in  [0,t_0-\tilde {\tau}_{D_1}]} X_{s}
\in D_0\Big)&\ge\inf_{y \in \tilde
D_0}\Pp^y\Big (\tau_{D_0}>t_0\Big)\\
&\ge \inf_{y \in \tilde
D_0}\Pp^y\Big
(\tau_{B(y,r_0)}>t_0\Big)\ge c_{10}.
\end{split}
\end{equation*}

Combining all the estimates above, we arrive at
\begin{equation*}
\begin{split}
&P^D_{t_0}\I_{D_0}(x)\\
& \ge c_{11}\Phi(\delta_{D}(x))^{1/2}\Phi(f_*(x_1+1))^{1/2}\\
&\quad \times
 \exp\bigg(-c_{12}n \Big(g^\gamma(|x|)+\log \frac{|x|}{ g(|x|)}\Big)\bigg)
 \prod_{i=1}^{n-1}(\tilde{f}(x^{(i-1)}_1\wedge r_0))^{d}
 \Phi(\tilde{f}(x_1^{(i)}\wedge r_0))
 \\
&\ge c_{13} \Phi(\delta_{D}(x))^{1/2}\Phi(f_*(x_1+1))^{1/2}\\
&\quad \times \exp\bigg[- c_{14}\frac{|x|}{g(|x|)} \Big(g^\gamma(|x|)+\log \frac{|x|}{ g(|x|)}+
\log\frac{1} {\inf_{r_*\le s \le |x|+1}f(s)}\Big)\bigg],
\end{split}
\end{equation*}
where the last inequality follows from \eqref{e:Phi} and the fact that
\begin{equation*}
\begin{split}
\sum_{i=1}^{n-1} \log\frac{1}{\tilde{f}(x_1^{(i-1)})}&\le
c_{16}\sum_{i=1}^n \log\bigg(e+\frac{1}{\inf_{r_*\le s \le |x|+1}f(s)}\bigg)\\
&\le c_{17}\frac{|x|}{g(|x|)}\log\Big(1+\frac{1}{\inf_{r_*\le s \le |x|+1}f(s)}\Big).
\end{split}
\end{equation*}
Hence we have proved \eqref{p5-2-3}.
\end{proof}

\section{Appendix: Boundedness, continuity and strict positivity of Dirichlet heat kernel}\label{section4-1}
In this appendix, we make some comments on assumptions of Dirichlet heat kernel in Subsection \ref{subsection1-1}. Let $(\mathscr{E}, \mathscr{F})$ be the Dirichlet form given by  \eqref{non-local} such that its jumping kernel $J(x,y)$ satisfies \eqref{e:dfc}.
 Since $(\mathscr{E}, \mathscr{F})$ is regular on $L^2(\R^d;dx)$, it associates a symmetric Hunt process $X=(X_t)_{t\ge0}$ starting from quasi-everywhere on $\R^d$.
 Suppose that there exist $0<\alpha_1\le\alpha_2<2$
and $c_1,c_2\in(0,\infty)$ such that
\begin{equation}\label{e3-1}
\frac{c_1} {|x-y|^{d+\alpha_1}}\le J(x,y)\le \frac{c_2}{|x-y|^{d+\alpha_2}}, \quad
0<|x-y|\le 1. \end{equation}
Then, according to  \cite[Proposition 3.1]{CKK1},
there exist a properly $\mathscr{E}$-exceptional set $\Nn\subset \R^d$ and  a transition
density (also called heat kernel)  $p(\cdot,\cdot,\cdot):(0,\infty)\times (\R^d
\setminus \Nn) \times (\R^d \setminus \Nn) \rightarrow [0,\infty)$
such that
 \begin{equation}\label{e:pupper}
p(t,x,y)\, \le \, c
\left(t^{-d /\alpha_1} \vee t^{-d/2}\right), \quad x,y\in\R^d\setminus \Nn, t>0.
\end{equation}

In the following, we further suppose that \begin{equation}\label{e3-2_n}
C_*:=\sup_{|x-y|>1} J(x,y)<\infty,
\end{equation} and that
\begin{equation}\label{e3-2_nn}
\Nn=\emptyset \text{ and } p(\cdot,\cdot,\cdot):(0,\infty)\times \R^d\times \R^d\rightarrow (0,\infty) \text{ is continuous and strictly positive}.
\end{equation}
See \cite{BKK,CK, CK1, CKK2,CKK1} and the references therein for
sufficient conditions on such assumptions.
In particular, the associated semigroup $(P_t)_{t\ge0}$ enjoys the strong Feller property.

\begin{proposition}\label{p:pDcont} Under assumptions above, for every open set $D$ and $t>0$, the
transition density for the process $X^D$, $p^D(t,\cdot,\cdot): D \times D \rightarrow (0,\infty)$,  is
continuous.
\end{proposition}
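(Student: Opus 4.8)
The plan is to obtain joint continuity of $(x,y)\mapsto p^D(t,x,y)$ from the strong Feller property of the killed semigroup $(P^D_t)_{t\ge0}$, together with the Chapman--Kolmogorov identity and the symmetry $p^D(t,x,y)=p^D(t,y,x)$. First, note that by \eqref{e:dden} we have $p^D(t,x,y)\le p(t,x,y)$, so \eqref{e:pupper} yields the uniform bound $\sup_{x,y\in D}p^D(t,x,y)\le c\,(t^{-d/\alpha_1}\vee t^{-d/2})=:M_t<\infty$ for each $t>0$. In particular, for fixed $t>0$ and $y\in D$ the function $p^D(t/2,\cdot,y)$ is a bounded Borel function on $D$ (extended by $0$ to $\Rd$), and
$$p^D(t,x,y)=\int_D p^D(t/2,x,z)\,p^D(t/2,z,y)\,dz=P^D_{t/2}\big[p^D(t/2,\cdot,y)\big](x),\qquad x,y\in D.$$
Thus it suffices to prove that $(P^D_t)_{t\ge0}$ is strong Feller, i.e.\ that $P^D_t$ maps bounded Borel functions on $D$ to functions continuous on $D$.

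To prove the strong Feller property of $(P^D_t)_{t\ge0}$, I would use that under the present hypotheses $X$ is doubly Feller: the strong Feller property of $(P_t)_{t\ge0}$ is assumed, while \eqref{e:pupper} together with the off-diagonal decay of $p(t,x,y)$ recorded in \cite{CK1, CKK1, BKK} also gives $P_t(C_0(\Rd))\subseteq C_0(\Rd)$ with strong continuity; killing on exiting an open set preserves this (cf.\ \cite[Corollary 7 and Remark 8(2)]{CWi14} and the references therein), so $(P^D_t)_{t\ge0}$ is strong Feller. A self-contained route is as follows: given $x_0\in D$, fix $r>0$ with $\overline{B(x_0,2r)}\subset D$, put $B=B(x_0,r)$ and $\sigma=\tau_B\le\tau_D$; the strong Markov property at $\sigma$ gives, for bounded Borel $f$ on $D$ and $x\in B$,
$$P^D_tf(x)=P^B_tf(x)+\Ee^x\big[(P^D_{t-\sigma}f)(X_\sigma)\,\I_{\{\sigma<t\}}\big],$$
and one reduces continuity in $x\in B$ of both terms — via a shrinking sequence of such balls, or via the doubly Feller property of $X$ — to the continuity of $x\mapsto\Ee^x[\,G(t-\sigma,X_\sigma)\,]$ for bounded kernels $G$ that are spatially continuous off $\overline B$; the latter follows from the strong Feller property of $(P_t)_{t\ge0}$ together with dominated convergence, using that $\sigma>0$ $\Pp^x$-a.s.\ and $\Pp^x(\sigma=t)=0$ for $x\in B$.

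Granting this, the first displayed identity shows that $x\mapsto p^D(t,x,y)$ is continuous on $D$ for each fixed $t>0$, $y\in D$, and by symmetry the same holds in the $y$-variable. To upgrade separate continuity to joint continuity I would first establish $L^1$-continuity of $x\mapsto p^D(s,x,\cdot)\in L^1(D)$ for each $s>0$: if $x_n\to x$ in $D$ then $p^D(s,x_n,\cdot)\to p^D(s,x,\cdot)$ pointwise, while $\int_D p^D(s,x_n,z)\,dz=P^D_s\I_D(x_n)\to P^D_s\I_D(x)=\int_D p^D(s,x,z)\,dz$ by the strong Feller property applied to $\I_D$, so Scheffé's lemma gives $\|p^D(s,x_n,\cdot)-p^D(s,x,\cdot)\|_{L^1(D)}\to0$. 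Then, from $p^D(t,x,y)=\int_D p^D(t/2,x,z)\,p^D(t/2,y,z)\,dz$ and $\|p^D(t/2,z,\cdot)\|_\infty\le M_{t/2}$,
$$\big|p^D(t,x,y)-p^D(t,x',y')\big|\le M_{t/2}\Big(\big\|p^D(t/2,x,\cdot)-p^D(t/2,x',\cdot)\big\|_{L^1(D)}+\big\|p^D(t/2,y,\cdot)-p^D(t/2,y',\cdot)\big\|_{L^1(D)}\Big)\longrightarrow0$$
as $(x',y')\to(x,y)$, which is the claimed joint continuity. (For $x,y$ in the same connected component of $D$, strict positivity then follows from $p(t,\cdot,\cdot)>0$ by the usual chaining of the Chapman--Kolmogorov identity along a finite cover of a path in $D$ by small balls; positivity on all of $D\times D$ needs an additional hypothesis, as in Proposition \ref{p:pDp}.)

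The main obstacle is the middle step, namely proving the strong Feller property of the killed semigroup. The difficulty is that neither $\tau_D$ nor $X_{\tau_D}$ depends continuously on the starting point, so a naive dominated-convergence argument for $x\mapsto\Ee^x[(P_{t-\tau_D}f)(X_{\tau_D});\tau_D\le t]$ fails; circumventing this requires either invoking the doubly Feller machinery for $X$ or performing the exit-ball analysis above carefully enough to confine the $x$-dependence inside a ball compactly contained in $D$, where the strong Feller property of $(P_t)_{t\ge0}$ can be brought to bear.
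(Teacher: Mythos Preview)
Your framework is a genuinely different route from the paper's. You aim to deduce joint continuity from the strong Feller property of $(P^D_t)_{t\ge0}$ via Chapman--Kolmogorov and Scheff\'e's lemma; that soft argument is correct once strong Feller of the killed semigroup is in hand, and the Scheff\'e step upgrading separate to joint continuity is clean. The paper instead works directly with the representation \eqref{e:dden}: since $p(t,\cdot,\cdot)$ is continuous by hypothesis, it suffices to show that $(x,y)\mapsto\Ee^x\big[p(t-\tau_D,X_{\tau_D},y)\I_{\{t\ge\tau_D\}}\big]$ is jointly continuous, and for this the paper invokes the classical argument of \cite[Theorem~2.4]{CZ}.

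The gap in your proposal is exactly the step you flag as the main obstacle. Under the bare hypotheses of the appendix (namely \eqref{e3-1}, \eqref{e3-2_n}, \eqref{e3-2_nn}) the Feller property $P_t C_0(\Rd)\subset C_0(\Rd)$ is neither assumed nor a consequence of the references you cite, which all require more specific two-sided control on $J$; so the doubly-Feller route is not available without further work. Your self-contained exit-ball decomposition is also incomplete: writing $P^D_tf(x)=P^B_tf(x)+\Ee^x[(P^D_{t-\sigma}f)(X_\sigma)\I_{\{\sigma<t\}}]$ only transfers the problem from $D$ to $B$, and the claimed continuity of $x\mapsto\Ee^x[G(t-\sigma,X_\sigma)]$ does \emph{not} follow from strong Feller of $(P_t)_{t\ge0}$ plus dominated convergence alone, since neither $\sigma$ nor $X_\sigma$ varies continuously with the starting point. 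What the Chung--Zhao argument actually needs is a quantitative input: a uniform off-diagonal bound $\sup_{|x-y|\ge\beta,\,t>0}p(t,x,y)<\infty$ for every $\beta>0$, together with $\Pp^x(\tau_D\le t)\to0$ uniformly on compacts as $t\downarrow0$. The paper obtains both via Meyer's construction: remove jumps of size larger than $1$ to get a process $Z$, use the small-time near-diagonal bound on $p_Z$ from \cite{BBCK} and the comparison $p(t,x,y)\le p_Z(t,x,y)+t\|J\I_{\{|\cdot|>1\}}\|_\infty$ together with \eqref{e:pupper} and \eqref{e3-2_n} to get the off-diagonal bound, and \cite[Lemma~3.8 and (3.20), (3.29)]{BBCK} for the exit-time estimate. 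With these in hand \cite[Theorem~2.4]{CZ} applies line by line. If you wished to salvage your route and verify strong Feller of $(P^D_t)_{t\ge0}$ directly, you would need these same ingredients, so there is no real shortcut here.
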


\begin{proof}
We first construct the process $Z=(Z_t)_{t\ge0}$ from $X$ by
removing jumps of size larger than $1$ via Meyer's construction (see
\cite[Remark 3.4]{BBCK}).
Let $p_Z(t,x,y)$ be the transition density of $Z$.
Note that, from \cite[(3.30)]{BBCK} one can check that there exist constants $c_i>0$ ($1\le i\le 4$) such that for all $|x-y| \le c_1$
\begin{align}
\label{epZ}
p_Z(t,x,y)  \le c_2 t |x-y|^{-c_3},
 \quad t <  c_4 |x-y|^{\alpha_2}.
\end{align}
By
\cite[Lemma 3.7(2)]{BBCK}, we have
\begin{align}
\label{epZ1}p(t,x,y)\, \le \, p_Z(t,x,y) +
 t \|J(x,y)\I_{\{|x-y|> 1\}}\|_{\infty}, \quad x,y\in\R^d, t>0,
\end{align}
We will claim that for all $ \beta>0$,
\begin{align}
\label{e:r: bound}
\sup_{|x-y| \geq \beta, 1 > t>0} p(t,x,y) +\sup_{|x-y| \geq \beta, t \ge 1} p(t,x,y) =:M_{1, \beta}+M_{2,\beta} < \infty.
 \end{align}
In fact, $M_{2,\beta} < \infty$ for all $\beta>0$ by \eqref{e:pupper}. On the other hand, by \eqref{e:pupper}, \eqref{e3-2_n},  \eqref{epZ} and \eqref{epZ1}, for  all $ \beta\le c_1$,
\begin{align*}
 M_{1,\beta}
&\le \sup_{|x-y| \geq \beta, t <1 \wedge (c_4 |x-y|^{\alpha_2}) } p(t,x,y)  +\sup_{|x-y| \geq \beta, 1>t\ge c_4 |x-y|^{\alpha_2}} p(t,x,y)  \nonumber\\
&\le
c_5 \sup_{|x-y| \geq \beta, t <1 \wedge (c_4 |x-y|^{\alpha_2}) } (t |x-y|^{-c_3}+ C_*t) +c_5 \sup_{ 1>t\ge c_4 \beta^{\alpha_2}}
 t^{-d /\alpha_1} \nonumber\\
&\le  c_6 (\beta^{-c_3} +C_* + \beta^{-d \alpha2 /\alpha_1}) < \infty.
\end{align*}
On the other hand, by Meyer's construction and \cite[Lemma 3.8, (3.29) and (3.20)]{BBCK}, one can obtain that $\Pp^x(\tau_D\le t)\to 0$  uniformly in any compact $K\subset D$ as $t\to0$.
Using the continuity of $p(\cdot, \cdot, \cdot)$, the strong Feller property of
$(P_t)_{t\ge 0}$ and \eqref{e:r: bound},
one can follow the proof of \cite[Theorem 2.4]{CZ} line by line and
show that, for each $t>0$,
$(x, y) \mapsto \Ee^x\big[p(t-\tau_D,X_{\tau_D},y)\I_{\{t\ge \tau_D\}}\big]$
is continuous on $D \times D$.
Thus, by \eqref{e:dden}, for every $t>0$ the
function $p^D(t,\cdot,\cdot): D \times D \rightarrow (0,\infty)$ is
continuous.
\end{proof}

Under assumptions above, if $D$ is a domain of $\R^d$ (i.e., $D$ is a connected open set), then
it is easy to verify that Dirichlet heat kernel $p^D(t,x,y)$ is
strictly positive for any $(t,x,y)\in (0,\infty)\times D\times D$.
See \cite[Corollary 7 and Remark 8 (2)]{CWi14} or \cite[Proposition 2.2 (i)]{G}.
For disconnected open set, we need some additional assumption. Following \cite[Condition (RC), p.\ 1120]{CWi14} (or see \cite[Definition 4.3]{KS1}), we call
an open set $D$ is {roughly connected} by the process $X$,
if for any $x,y\in D$, there exist $m\ge 1$ and distinct connected components $\{D_i\}_{i=1}^m$ of $D$, such that
$x\in D_1$, $y\in D_m$ and for every
$1 \le i \le m-1$,
dist$(D_i, D_{i+1})< r_J$,
where
$$r_J:=\inf\Big\{r>0: \inf_{|x-y|\le r}J(x,y)>0\Big\}.$$

The following proposition essentially has been proved in \cite[Proposition 6 and Remark 8]{CWi14}.

\begin{proposition}\label{p:pDp} Under assumptions above, if  the open set $D\subset \R^d$ is {roughly connected} by the process $X$, then the
Dirichlet heat kernel $p^D(t,x,y)$ is
strictly positive for any $(t,x,y)\in (0,\infty)\times D\times D$.
\end{proposition}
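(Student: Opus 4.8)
The plan is to follow the strategy of \cite[Proposition 6 and Remark 8]{CWi14}. The first ingredient, already noted in the discussion preceding the proposition (see \cite[Corollary 7 and Remark 8 (2)]{CWi14} or \cite[Proposition 2.2 (i)]{G}), is the connected case: if $U\subset\R^d$ is a domain, then $p^U(t,x,y)>0$ for all $t>0$ and all $x,y\in U$. I would recall the idea of its proof: since $U$ is open and connected it is polygonally connected, so $x$ and $y$ can be joined inside $U$ by a finite chain of overlapping balls $B_1,\dots,B_k$ with $\overline{B_j}\subset U$; positivity then propagates along the chain by the Chapman--Kolmogorov identity, once one knows $p^{B}(s,u,v)>0$ for every ball $B$ with $\overline B\subset U$, every $s>0$ and $u,v\in B$. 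The latter follows from the near-diagonal lower heat kernel bounds (equivalently, the parabolic Harnack inequality) for $X$, which hold under \eqref{e3-1} by \cite{CKK1}, together with the continuity of $p^B(s,\cdot,\cdot)$ from Proposition \ref{p:pDcont} and the elementary survival estimate $\Pp^u(\tau_B>s)>0$.

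Now let $D$ be roughly connected by $X$, fix $x,y\in D$, and take distinct connected components $D_1,\dots,D_m$ of $D$ with $x\in D_1$, $y\in D_m$ and $\mathrm{dist}(D_i,D_{i+1})<r_J$ for $1\le i\le m-1$. By the connected case and $D_i\subset D$ we have $p^D(s,\cdot,\cdot)\ge p^{D_i}(s,\cdot,\cdot)>0$ on $D_i\times D_i$ for every $s>0$ and every $i$, so it remains only to ``hop'' from $D_i$ to $D_{i+1}$ using the jumps of $X$. Since $\mathrm{dist}(D_i,D_{i+1})<r_J$, I would fix balls $A_i\subset D_i$ and $A_{i+1}'\subset D_{i+1}$ and $\rho<r_J\wedge 1$ such that $J(w,w')\ge c_1\rho^{-d-\alpha_1}>0$ for all $w\in A_i$, $w'\in A_{i+1}'$ (using \eqref{e3-1}), and then apply the L\'evy system \eqref{e:levy} for the killed process $X^D$ — decomposing the relevant event into: travel inside $D_i$ from $u$ to $A_i$, perform exactly one jump from $A_i$ into $A_{i+1}'$ before a small time, then travel inside $D_{i+1}$ to $v$ — to obtain, for all small $s>0$ and all $u\in D_i$, $v\in D_{i+1}$,
$$p^D(s,u,v)\ \ge\ c\,s\int_{A_i}\!\int_{A_{i+1}'}p^{D_i}(s/3,u,w)\,J(w,w')\,p^{D_{i+1}}(s/3,w',v)\,dw\,dw'\ >\ 0 .$$

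Finally, I would compose these ``hops'' with the intra-component lower bounds by Chapman--Kolmogorov to get $p^D(t_0,x,y)>0$ for some (indeed every small) $t_0>0$, and then upgrade to all $t>0$: since $p^D(\tau,x,x)\ge p^{D_1}(\tau,x,x)>0$ for every $\tau>0$, continuity (Proposition \ref{p:pDcont}) gives $p^D(t_0,\cdot,y)>0$ on a neighbourhood $W\subset D_1$ of $x$, while $p^D(t-t_0,x,\cdot)\ge p^{D_1}(t-t_0,x,\cdot)>0$ on all of $D_1$, so that $p^D(t,x,y)\ge\int_W p^D(t-t_0,x,w)\,p^D(t_0,w,y)\,dw>0$ for every $t\ge t_0$; for $t<t_0$ one simply reruns the construction with total time $t$, each leg having duration at most $t/(2m)$. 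The main obstacle — and the only step needing genuine care — is the L\'evy-system computation of the second paragraph, namely producing a bona fide positive lower bound, uniform in the starting point, for the probability that $X^D$ moves from $D_i$ to $D_{i+1}$ in a short time via a single jump; this is precisely where the hypotheses $\mathrm{dist}(D_i,D_{i+1})<r_J$ and \eqref{e3-1} are used, following \cite[Proposition 6]{CWi14}.
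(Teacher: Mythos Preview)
Your proposal is correct and follows exactly the approach the paper indicates: the paper gives no self-contained proof but simply refers to \cite[Proposition 6 and Remark 8]{CWi14}, and your sketch reproduces that argument (connected case via chains of balls and near-diagonal lower bounds, then L\'evy-system hops between components, then Chapman--Kolmogorov to reach all $t>0$). The only cosmetic wrinkle is the displayed ``hop'' inequality, where the time bookkeeping is informal---the honest bound integrates over the jump time rather than placing $s/3$ in both factors and extracting $cs$---but this does not affect the conclusion.
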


In particular, if there are some constants $0<a_1\le a_2\le 1$ such that for all $x,y\in D$,  $x\sim_{(n; a_1, a_2)} y$
(that is, $x$ is connected with $y$ in a reasonable way with respect to constants $0<a_1\le a_2<1$, see Definition \ref{d:c-reason}),
then the open set $D$ is {roughly connected} by the process $X$ under assumption \eqref{e:Jlow}.

\ \

The result below should be known, see e.g.\ \cite[Chapter V, Theorem 6.6]{Sch} or \cite[Theorem XIII. 43]{RS2}. The reader can see \cite[Proposition 1.2]{CW16} for the proof.

\begin{proposition}\label{p5-1-0}
Suppose that $(P_t^D)_{t \ge 0}$ is compact, and let $\phi_1$ be its first eigenfunction $($called ground state$)$. If $p^D(t,x,y)$ is bounded, continuous and strictly positive on $(0,\infty)\times D\times D$, then
$\phi_1$ also has a version which is bounded, continuous and strictly positive on $D$.
\end{proposition}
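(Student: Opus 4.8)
The plan is to take as the version of $\phi_1$ the function defined pointwise through the eigenfunction equation itself: fixing any $t_0>0$, since $P_{t_0}^D\phi_1=e^{-\lambda_1 t_0}\phi_1$ in $L^2(D;dx)$, set
$$\phi_1(x):=e^{\lambda_1 t_0}\int_D p^D(t_0,x,y)\phi_1(y)\,dy,\qquad x\in D,$$
which by the semigroup property does not depend on the choice of $t_0$. Everything then reduces to analysing this integral, using only the two standing facts that $p^D(t,\cdot,\cdot)$ is bounded, continuous and strictly positive on $D\times D$, and that $\phi_1\in L^2(D;dx)$.

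For boundedness, I would apply the Cauchy--Schwarz inequality and the identity $\int_D p^D(t_0,x,y)^2\,dy=p^D(2t_0,x,x)$ (a consequence of the Chapman--Kolmogorov equation and symmetry) to obtain $|\phi_1(x)|\le e^{\lambda_1 t_0}\,p^D(2t_0,x,x)^{1/2}\,\|\phi_1\|_{L^2(D)}\le e^{\lambda_1 t_0}\,\|p^D(2t_0,\cdot,\cdot)\|_\infty^{1/2}\,\|\phi_1\|_{L^2(D)}$, which is finite. For continuity at a point $x_0\in D$, fix $\varepsilon>0$ with $\overline{B(x_0,\varepsilon)}\subset D$ and, for $R>\varepsilon$, split the integral defining $\phi_1(x)$ into the part over $D\cap B(x_0,R)$ and the tail over $D\setminus B(x_0,R)$. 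The same Cauchy--Schwarz argument bounds the tail, uniformly in $x\in B(x_0,\varepsilon)$, by $\|p^D(2t_0,\cdot,\cdot)\|_\infty^{1/2}\,\|\phi_1\|_{L^2(D\setminus B(x_0,R))}$, which vanishes as $R\to\infty$; on the finite-measure set $D\cap B(x_0,R)$ the integrand is dominated by $\|p^D(t_0,\cdot,\cdot)\|_\infty\,|\phi_1|$ and is continuous in $x$ by the continuity of $p^D(t_0,\cdot,\cdot)$, so dominated convergence gives convergence of that part as $x\to x_0$. Letting first $x\to x_0$ and then $R\to\infty$ yields continuity on $D$.

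For strict positivity, I would first pass to $|\phi_1|$. Since $\phi_1$ lies in the domain of the $L^2$-generator it belongs to $\F^D$, and the Markovian (normal-contraction) property of the Dirichlet form $(\E^D,\F^D)$ gives $\E^D(|\phi_1|,|\phi_1|)\le\E^D(\phi_1,\phi_1)=\lambda_1\|\phi_1\|_{L^2(D)}^2$; since $\lambda_1$ is the bottom of the spectrum of the generator (equivalently $e^{-\lambda_1 t}$ is the largest eigenvalue of $P_t^D$) and $\lambda_1<\lambda_2$, it follows that $|\phi_1|$ is again a ground state, so we may assume $\phi_1\ge0$ with $\phi_1\not\equiv0$. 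Then, for every $x\in D$, the representation above gives $\phi_1(x)=e^{\lambda_1 t_0}\int_D p^D(t_0,x,y)\phi_1(y)\,dy>0$, because $p^D(t_0,x,\cdot)>0$ on $D$ and $\phi_1\ge0$ is not identically zero. The steps are all routine; the only point that needs a little care is the control of the tail of the integral when $D$ is unbounded, and the observation that makes this effortless is the identity $\int_D p^D(t,x,y)^2\,dy=p^D(2t,x,x)$, which simultaneously yields boundedness of the canonical version and the uniform smallness of the tail.
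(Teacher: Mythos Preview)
Your argument is correct and essentially self-contained; the paper itself does not supply a proof of this proposition but simply refers the reader to \cite[Proposition 1.2]{CW16} (with the background Jentzsch/Perron--Frobenius results in \cite[Chapter V, Theorem 6.6]{Sch} and \cite[Theorem XIII.43]{RS2}). So there is no in-paper proof to compare against line by line.

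A couple of minor remarks on your write-up. First, the step ``since $\lambda_1<\lambda_2$ it follows that $|\phi_1|$ is again a ground state, so we may assume $\phi_1\ge0$'' is really two separate facts: that $|\phi_1|$ is an eigenfunction for $\lambda_1$ needs only the Markovian contraction property and the variational characterisation of the bottom eigenvalue, whereas the passage from ``$|\phi_1|$ is a ground state'' to ``$\phi_1$ has constant sign'' is where simplicity of $\lambda_1$ enters. That simplicity is exactly what the Jentzsch/Perron--Frobenius theory cited in the paper delivers for a compact integral operator with strictly positive kernel, and in Section~\ref{sec-3} the paper explicitly records $\lambda_1^D<\lambda_2^D$ as part of the set-up, so you are on safe ground invoking it; just be aware that this is the place where the strict positivity of $p^D$ is doing double duty. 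Second, your use of the identity $\int_D p^D(t_0,x,y)^2\,dy=p^D(2t_0,x,x)$ to control both the sup-norm of $\phi_1$ and the tail in the continuity argument is the clean way to handle unbounded $D$, and it is worth noting (as you do) that this is the one nontrivial observation in an otherwise routine proof.
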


\bigskip

\noindent \textbf{Acknowledgements.} The
research of Xin Chen is supported by National Natural Science Foundation of China (No.\ 11501361), \lq\lq Yang Fan Project\rq\rq \, of Science and Technology Commission of Shanghai Municipality (No.\ 15YF1405900), and Fujian Provincial
Key Laboratory of Mathematical Analysis and its Applications
(FJKLMAA). The research of Panki Kim is supported by the National Research Foundation of Korea (NRF) grant funded by the Korea government (MSIP) (No.\ 2016R1E1A1A01941893).
The research of Jian Wang is supported by National
Natural Science Foundation of China (No.\ 11522106), the Fok Ying Tung
Education Foundation (No.\ 151002), the JSPS postdoctoral fellowship
(26$\cdot$04021), National Science Foundation of
Fujian Province (No.\ 2015J01003), the Program for Nonlinear
Analysis and Its Applications (No. IRTL1206), and Fujian Provincial
Key Laboratory of Mathematical Analysis and its Applications
(FJKLMAA).

\end{document}